
\documentclass[final,leqno]{siamltex}


\usepackage{amsmath}
\usepackage{amssymb}
\usepackage[figuresright]{rotating}
\usepackage{tikz}
\usetikzlibrary{calc}
\usepackage{bm}
\usepackage{epstopdf}
\usepackage{graphicx}
\usepackage{booktabs}
\usepackage{subfigure}
\usepackage{multirow}
\usepackage{dsfont}
\usepackage{epstopdf}
\usepackage{stmaryrd} 
\usepackage{changepage}

\newtheorem{thm}{Theorem}[section]
\newtheorem{lema}{Lemma}[section]
\newtheorem{Assumption}{Assumption}[section]
\newtheorem{coro}{Corollary}[section]
\newtheorem{remark}{Remark}[section]
\newtheorem{exam}{Example}[section]

\usepackage{enumerate}
\usepackage{caption}
\usepackage{graphicx}
\usepackage{float} 
\usepackage[right=3cm]{geometry}

\usepackage{mathrsfs}  
\usepackage{epstopdf}
\usepackage{epsfig}

\usepackage{booktabs} 

\DeclareMathOperator{\dive}{div}


\title{A priori and a posteriori error estimates of a really pressure-robust virtual element method for the incompressible Brinkman problem}


\author{Yu Xiong$^\dag$ \and Yanping Chen$^\ast$}

\begin{document}

\maketitle

\begin{abstract}
This paper presents both a priori and a posteriori error analyses for  a really pressure-robust virtual element method to approximate the incompressible Brinkman problem.  We construct a divergence-preserving reconstruction operator using the Raviart–Thomas element for the discretization on the right-hand side. The optimal priori error estimates are carried out, which imply the velocity error in the energy norm is independent of both the continuous pressure and the viscosity. Taking advantage of the virtual element method's ability to handle more general polygonal meshes, we implement effective mesh refinement strategies and develop a residual-type a posteriori error estimator. This estimator is proven to provide global upper and local lower bounds for the discretization error.  Finally, some numerical experiments  demonstrate the robustness, accuracy, reliability and efficiency of the method.
\end{abstract}

\begin{keywords}
Brinkman equations, virtual element, pressure-robust,  priori and posteriori errors, adaptive mesh refinement
\end{keywords}

\begin{AMS}
65N15, 65N30, 76D07, 35J47, 35K55
\end{AMS}

\let\thefootnote\relax\footnotetext{\small 
    $\ast$ Corresponding author. School of Science, Nanjing University of Posts and Telecommunications, Nanjing 210023, China. \ ({\tt Email:yanpingchen@njupt.edu.cn}) \par
     $\dag$ School of Mathematics and Computational Science, Hunan Key Laboratory for Computation and Simulation in Science and Engineering, Xiangtan University, Xiangtan, 411105, Hunan, China.\ ({\tt Email:xiongyu@smail.xtu.edu.cn}) \par 
    
  This work is supported by the State Key Program of National Natural Science Foundation of China (11931003), Natural Science Research Start-up Foundation of Recruiting Talents of Nanjing
  University of Posts and Telecommunications (NY223127) and Postgraduate Scientific Research Innovation Foundation of Xiangtan University (XDCX2024Y178).
  
}

\pagestyle{myheadings}
\thispagestyle{plain}

\section{Introduction}	\label{introduction}
This paper is concerned with the development of robust virtual element numerical methods for the Brinkman equations. The Brinkman equations model fluid flow in complex porous media with a permeability coefficient highly varying so that the flow is dominated by Darcy in some regions and by Stokes in others. In a simple form, the Brinkman model seeks unknown velocity $\mathbf{u}$ and  pressure $p$ satisfying	
 \begin{subequations}\label{Brinkman equations}
     \begin{align}
           -\nu\Delta \mathbf{u} + \nu \kappa^{-1} \mathbf{u} - \nabla p &=\mathbf{f} \qquad \text{in} \;\Omega,	\label{Brinkman eq 1}\\
         \nabla\cdot \mathbf{u} &= 0 \qquad \text{in} \;\Omega,  \label{Brinkman eq 2} \\
         \mathbf{u} &= \mathbf{0}\qquad \text{on}\; \partial\Omega.
     \end{align}
 \end{subequations}	
Here,  $\nu>0$ and $\mathbf{f}$ denote the fluid viscosity and source term, respectively. The permeability tensor $\kappa$ of the porous media is symmetric positive definite.

The incompressible Brinkman problem, as given by equations \eqref{Brinkman eq 1} and \eqref{Brinkman eq 2}, can efficiently capture Stokes and Darcy type flow behavior based on $\kappa$ without  necessitating the complex interface conditions typically required by the Stokes-Darcy interface model. This  is very convenient when modeling complicated  porous media scenarios, with significant implications for industrial and environmental issues such as industrial filters and open foams \cite{mu2014}. The permeability, which varies greatly, causes flow velocity to change substantially through the porous medium. In regions with high permeability $\kappa$, the model behaves like the Stokes equations, while in areas with low permeability $\kappa$, it simplifies to the Darcy equations. Therefore, numerical schemes for the Brinkman equations must be carefully designed to handle both Stokes and Darcy flows, which is essential for accurately simulating fluid flow in porous media. To address these challenges, various strategies have been proposed by researchers. In works such as \cite{burman2005}, \cite{burman2007} and \cite{xie2008},  Burman and Xie et al. have introduced jumps penalization techniques. These methods stabilize the Crouzeix-Raviart-P0 finite elements or P1-P0 finite elements by penalizing the normal component of the velocity field or the pressure field, respectively. It is worth mentioning that most of the numerical methods for solving  Brinkman equations are rely on  triangular (simplicial) and quadrilateral meshes. However, there exists another numerical methods that offers great flexibility in meshing with  polygonal elements, such as weak Galerkin  method (WG), mimetic finite differences method (MFD) and virtual element method (VEM). In particular, Mu et al. \cite{mu2014} developed a stable weak Galerkin finite element method that provides a robust and accurate numerical scheme for both Darcy and Stokes dominated flows. Furthermore, Zhai et al. \cite{zhai2016new} presented a new weak Galerkin finite element scheme for the  model, which offers a flexible and efficient approach to solve the Brinkman equations in complex porous media by utilizing generalized functions and their weak derivatives. 
	
 As a generalization of the finite element method (FEM) and mimetic finite differences method  on polygonal mesh, the virtual element method was initially proposed by Beir{\~a}o da Veiga et al. \cite{veiga2013,veiga2014} and has attracted the attention of many researchers in recent years. The construction of the virtual element space involves a combination of a polynomial subspace and an additional non-polynomial virtual subspace. To approximate the non-polynomial parts of the discrete bilinear forms, suitable projection operators are employed, which rely only on the degrees of freedom linked to the virtual element space. Consequently, we can effectively handle meshes with more general polygonal elements, without the need for directly calculating non-polynomial functions. For a thorough description of the VEM, see \cite{Antonietti2022,Veiga2023,Mascotto2023}. In recent years, the VEM has attracted more and more researchers' attention to solve fluid flow problems, such as the Stokes problem \cite{chen2019divergence, veiga2017a, Frerichs2020, JianMeng2023},  the Navier-Stokes problem \cite{Irisarri2019, Liyang2023}, the Darcy-Stokes problem  \cite{Zhaokun2020, Zhaokun2024} and so on \cite{Xiong2024b, Veiga2021, ypchen2023, Meng2024, Wangyang2024}.  As we  discussed in previous work \cite{xiong2024}, the VEM basis functions are closely related to the partial differential equation (PDE) within each element, facilitating the construction of divergence-free virtual elements. Although divergence-free VEM have been developed to solve the Brinkman problem \cite{Vacca2018brinkman}, it is not really pressure-robust due to the fact that the right-hand side term is not carefully discretized \cite{Frerichs2020}. In the standard divergence-free VEM scheme, this consistency error enters the prior velocity estimate as the inverse of the viscosity $1 / \nu$, so locking phenomenon occurs when $\nu\rightarrow 0$. A significant contribution in the VEM approximate to  the Brinkman equations was made by Wang et al. \cite{wang2021Brinkman}, who developed a really pressure-robust VEM scheme by constructing a divergence-preserving  $\text{CW}_0$ reconstruction operator. Moreover, the reconstruction technique has also been applied to the conforming VEM \cite{Wanggang2021,Wanggang2023} and nonconforming VEM \cite{Liu2020,Liu2022}  for (Navier-) Stokes  problems. Inspired by the aforementioned works, we will construct a really pressure-robust virtual element method (PR-VEM)  based on  divergence-preserving Raviart-Thomas reconstruction operator to solve the Brinkman problem. From a computational perspective, in practical applications, the lowest-order virtual element is typically preferred, and our method is also applicable for the case  $k=1$.

On the other hand, in numerical approximate to PDE problems, it is crucial to use adaptive mesh refinement techniques guided by a posteriori error indicators. For instance, they ensure that the errors remain beneath a specified threshold at a reasonable computational expense, particularly when dealing with solutions that exhibit singular behavior. Due to the large flexibility of the meshes to which the VEM is applied, mesh adaptivity becomes an appealing feature since mesh refinement strategies can be implemented very efficiently. The field of a posteriori error analysis for the VEM has seen significant development. Beir{\~a}o da Veiga et al. \cite{Veiga2015} firstly developed a residual-based a posteriori error estimator for the $C^1$-conforming VEM applied to the Poisson problem. After that they focused on a posteriori error estimation for mesh adaptivity within the hp-VEM framework \cite{Veiga2019}. Moreover, Chi et al. \cite{chi2019} introduced a recovery-based a posteriori error estimation framework that is applicable to VEMs of any order. Lastly, some residual-type  posteriori error estimators for the virtual element approximation of the (Navier-) Stokes problem are presented in \cite{Manzini2024,Wang2020post,Wang2021post}. As far as we know, there is only one paper on the a posteriori error estimation of the Brinkman equation using virtual elements \cite{Mauricio2020}. It is natural that we develop an adaptive PR-VEM algorithm guided by a residual-type a posteriori error estimator for the Brinkman equations \eqref{Brinkman equations}. 

The aim of this work is to develop a  really pressure-robust and divergence-free VEM for the two-dimensional incompressible Brinkman problem. In addition to deriving the optimal a priori error estimate, we also give a posterior error estimator and prove its reliability and efficiency. It is noteworthy that, in contrast to the standard divergence-free VEM scheme, the velocity error within the prior error estimation is independent not only of the continuous pressure $p$ but also of the viscosity $\nu$, there by achieving really pressure robustness. This article also provides a mesh refinement strategy and an adaptive algorithm. Numerical experiments not only validate our theoretical analyses, but also apply (adaptive) PR-VEM to porous media areas with high contrast permeability and complex geometric areas.

The structure of this paper is outlined as follows: Section 2 explains essential notations, derives the weak formulation of the Brinkman equations, and introduces the Helmholtz decomposition. In Section 3, a framework of a really pressure-robust VEM is constructed and the presented scheme is proven to be well-posed. The optimal a posteriori error estimate is derived in Section 4. Section 5 present a accurate and effective posteriori error estimator driving the adaptive mesh refinement. In Section 6, we implement some numerical experiments. Finally, we offer our conclusions in Section 7.

\section{Notations and Preliminaries}
 We consider a convex  polygonal domain $\Omega$ within the two-dimensional Euclidean space $\mathbb{R}^2$, characterized by a Lipschitz boundary $\partial\Omega$. The $L^2(\Omega)$ norm and inner product will be denoted by $\|\cdot\|$ and $(\cdot,\cdot)$, respectively, while  all other norms will be labeled with subscripts or specifically defined. 
 
 Then we consider the natural functions spaces for velocity and pressure, respectively, by
$$
\mathbf{V}:=[H_0^1(\Omega)]^2 \quad \text { and } \quad Q:=L_0^2(\Omega)=\left\{q \in L^2(\Omega):(q, 1)=0\right\},
$$
where the space $\mathbf{V}$ equipped with the energy norm 
\begin{equation}	\label{energy norm}
    \interleave \mathbf{v} \interleave : = \left(\|\nabla \mathbf{v}\|^2 + \|\kappa^{-1/2} \mathbf{v}\|^2\right)^{\frac{1}{2}} \qquad \forall \mathbf{v}\in\mathbf{V},
\end{equation}
where the permeability $\kappa$ is defined in \eqref{Brinkman equations}. In $\mathbf{V}$, we have the Friedrichs inequality: there exits a constant $C_F$ depending only on $\Omega$ such that for any $\mathbf{\Phi}\in\mathbf{V}$,
\begin{equation}	\label{Friedrichs}
    \|\mathbf{\Phi}\|_1 \le C_F \|\nabla\mathbf{\Phi}\|.
\end{equation}

Moreover, we  introduce the kernel space
\begin{equation}	\label{continuous kernel space}
       \mathbf{Z} := \{\mathbf{v}\in\mathbf{V}\quad \text{s.t.}\quad (\nabla\cdot\mathbf{v}, q) = 0\quad \text{for all }q\in Q\}.
\end{equation}

\subsection{The continuous problem}
Based on the above preliminaries, the variational formulation of problem \eqref{Brinkman equations} is: find $(\mathbf{u}, p)\in \mathbf{V}\times Q$ such that
 \begin{subequations}\label{continue weak formulation}
    \begin{align}
       \nu a(\mathbf{u}, \mathbf{v}) + b(\mathbf{v},p) = (\mathbf{f},\mathbf{v}) \qquad \forall\,\mathbf{v}\in\mathbf{V},	\label{continue weak formulation 1}		\\
        b(\mathbf{u},q) = 0 \qquad \forall\, q\in Q,	\label{continue form b}
    \end{align}
\end{subequations}	
where the continuous bilinear forms are defined as
$$
      a(\mathbf{u}, \mathbf{v}):=\int_{\Omega} \mathbf{\nabla} \mathbf{u}: \mathbf{\nabla} \mathbf{v} + \kappa^{-1} \mathbf{u}\cdot\mathbf{v} \,\mathrm{d\mathbf{x}}, \quad b(\mathbf{v}, q):=\int_{\Omega} q \nabla\cdot\mathbf{v}\,\mathrm{d\mathbf{x}},
$$
where the bilinear form $b(\cdot,\cdot)$ and the space $\mathbf{V}$ and $Q$ satisfy the inf-sup condition \cite{brezzi2012mixed}, i.e., $\exists \;\text{constant}\; \gamma$ such that
\begin{equation}	\label{continue LBB condition}
    \sup_{\mathbf{v}\in\mathbf{V},\mathbf{v}\neq 0} \frac{b(\mathbf{v}, q)}{\|\nabla\mathbf{v}\|} \ge \gamma \|q\|\qquad \forall\, q\in Q.
\end{equation}

It is obvious that the bilinear form $a(\cdot,\cdot)$ is continuous and  coercive: there exists a positive constant $C_a$ such that
\begin{subequations}
    \begin{align}
         &a(\mathbf{w},\mathbf{v}) \le C_a \interleave \mathbf{w} \interleave \cdot \interleave \mathbf{v} \interleave\qquad\forall\,\mathbf{w},\mathbf{v}\in\mathbf{V}.	\label{a continuity}		\\
         &a(\mathbf{v},\mathbf{v})\ge  \interleave \mathbf{v} \interleave^2\qquad \forall\,\mathbf{v}\in\mathbf{V}. \label{a coercity}
    \end{align}
\end{subequations}

Combining \eqref{continue LBB condition}, \eqref{a coercity} and the standard saddle point theory, \cite{wang2021Brinkman} one can get the well-posedness of the continue problem \eqref{continue weak formulation}.

\subsection{The Helmholtz decomposition}
For the load term $\mathbf{f}\in [L^2(\Omega)]^2$, there exists a unique Helmholtz decomposition \cite{John2017}
\begin{equation}	\label{Helmholtz}
    \mathbf{f} = \mathbb{P}(\mathbf{f}) + \nabla \alpha,
\end{equation}
where $\mathbb{P}(\mathbf{f})\in \left\{\mathbf{v}\in [L^2(\Omega)]^2:\;\nabla\cdot\mathbf{v}=0,\;\mathbf{v}=0\;\text{on}\; \partial\Omega\right\}$ is called the Helmholtz projector of $\mathbf{f}$ and $\alpha\in H^1(\Omega)$. Then by restricting  $\mathbf{v}\in\mathbf{Z}$, the weak form \eqref{continue weak formulation}  can be reformulated as the  divergence-free problem: find $\mathbf{u} \in \mathbf{Z}$ such that
\begin{equation}
    \nu a(\mathbf{u},\mathbf{v}) = \left(\mathbb{P}(\mathbf{f}),\mathbf{v}\right)\qquad \forall\,\mathbf{v}\in\mathbf{Z}.
\end{equation}
Furthermore, choosing $\mathbf{v} = \mathbf{u}$, combining \eqref{a coercity} and the Poincar\'{e} inequality with the constant $C_P$, we have
$$
 \nu\interleave\mathbf{u}\interleave^2 = \nu a(\mathbf{u},\mathbf{u})= \left(\mathbb{P}(\mathbf{f}),\mathbf{u}\right) \le C_P \|\mathbb{P}(\mathbf{f})\|\cdot |\mathbf{u}|_1\le C_P \|\mathbb{P}(\mathbf{f})\|\cdot \interleave\mathbf{u}\interleave,
$$
where the last step follows naturally, due to the fact that $\kappa$ is symmetric positive definite. Then we obtain the stability estimate for the velocity
\begin{equation}	\label{continuous bound of velocity}
    \interleave\mathbf{u}\interleave \le \frac{C_P}{\nu}\|\mathbb{P}(\mathbf{f})\|.
\end{equation}

\section{A really pressure-robust Virtual element}	\label{section space}
In this section, we introduce the concept of polygonal subdivision of $\Omega$ and the basic settings of the VEM. Subsequently, we constructed a divergence-preserving reconstruction operator based on the lowest-order Raviart-Thomas finite element space $\mathrm{RT}_0$, and applied it to the discretization of the right-hand side, thereby proposing a really pressure-robust and well-posed VEM scheme.

\subsection{Projection operators and virtual element spaces}
 Let $\{\Omega_h\}$ represent the sequence of $\Omega$ subdivided into a set of general polygonal elements $E$ with $h_E :=\text{diameter}(E)$,  mesh size $h:=\max_{E\in\Omega_h} h_E$ and boundary $\partial E$. Moreover, $e$ stands for a generic edge of an element $E$; $N^{V}_E$ denotes the number of vertices in $E$, and $V_i$, $1\le i\le N^V_E$ represent any vertex in $E$. We also denote the outward unit normal vector and unit tangential vector on $\partial E$ by $\mathbf{n}_E$ and $\mathbf{t}_E$, respectively. The subscript E will be dropped in a clear space. Moreover, denote by $\mathcal{E}_e$ the set of elements taking $e$ as an edge, and let $\mathcal{F}^0_h$ be a set of interior edge shared by two elements. We suppose that  $\{\Omega_h\}$ satisfy the following assumptions (refer \cite{veiga2013, veiga2014, Verma2021, xiong2024}):
\begin{Assumption}\label{assumption regularity}
    There exists a positive real number $\rho$ such that for all $h$ and for every $E\in\Omega_h$: (1) the ratio between the shortest edge $e_{\min}$ and the diameter $h_E$ of $E$ is greater than $\rho$, i.e., $\frac{e_{\min}}{h_E}>\rho$; (2) $E$ is star-shaped with respect to a ball of radius $\rho h_E$ and center $\mathbf{x}_E\in E$.
\end{Assumption}

For any non-negative integer $k$, $P_k(\mathcal{D})$ denotes the space of polynomials with total degree less than or equal to $k$ on the  bounded domain $\mathcal{D}$ of dimension $d\;(d = 1,2)$, as an edge or an element, and the corresponding vector polynomial space  denotes as $\mathbf{P}_k(\mathcal{D})$. Let $\mathbf{x}_{\mathcal{D}}$ and $h_{\mathcal{D}}$ be denoted as the barycenter and the diameter of $\mathcal{D}$, respectively. We denote by $\mathcal{M}_k(\mathcal{D})$ the scaled monomial set as a basis for the polynomial space $\mathbf{P}_k({\mathcal{D}})$ defined on $\mathcal{D}$ \cite{Ahmad2013,veiga2013,veiga2014}. The set of scaled monomials $\mathcal{M}_k(\mathcal{D})$ with degrees less than or equal to $k$  is defined as follows
$$
\mathcal{M}_k(\mathcal{D}) := \left\{m\; :\; m = \left( \frac{\mathbf{x}-\mathbf{x}_{\mathcal{D}}}{h_\mathcal{D}}\right)^{\alpha}\quad \text{for}\;\; \alpha\in\mathbb{N}^d, \; 0\le |\alpha|\le k\right\}.
$$

For simplicity, we also define the following spaces
\vspace{0.5em}
\begin{itemize}
    \setlength{\itemsep}{1em}
    \item  $\mathcal{G}_k(E):=\nabla P_{k+1}(E) \subseteq \mathbf{P}_k(E)$;
    \item  $\mathcal{G}_k^{\perp}(E):=\mathbf{x}^{\perp} P_{k-1}(E) \subseteq \mathbf{P}_k(E)\;$ with $\;\mathbf{x}^{\perp}=(y,-x)^{\mathrm{T}}$;
    \item  $\mathbf{B}_1(\partial E):=\left\{\mathbf{v} \in \mathbf{C}^0(\partial E),\left.\mathbf{v}\right|_e \cdot \mathbf{n}_E \in P_2(e),\left.\mathbf{v}\right|_e \cdot \mathbf{t}_E \in P_1(e) \quad{{\forall}} e \in \partial E\right\}$.
\end{itemize}
 \vspace{0.5em}
Following \cite{Xiong2024b,Verma2021, Wanggang2021}, we introduce the local lowest order virtual element space for the velocity as follows:
$$
\begin{aligned}
    \mathbf{U}_E:= & \left\{\mathbf{v} \in \mathbf{H}^1(E):\left.\mathbf{v}\right|_{\partial E} \in \mathbf{B}_1(\partial E), \Delta \mathbf{v}-\nabla s=\mathbf{0}\; \text { and } \;\nabla \cdot \mathbf{v} \in P_0(E)\right. \\
    & \text { for some } \left.s \in L_0^2(E)\right\} .
\end{aligned}
$$
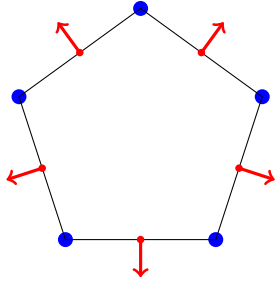
\begin{figure}[ht]
    \centering  
        \begin{tikzpicture}
            \pgfmathsetmacro{\side}{1.7} 
            \pgfmathsetmacro{\angle}{360/5} 
            
            \foreach \i in {1,...,5} {
                \coordinate (P\i) at ({\angle/4 + (\i-1)*\angle}:\side);
                \node at (P\i) [circle, fill=blue, inner sep=2pt] {};
            }
            
            \draw (P1) -- (P2) -- (P3) -- (P4) -- (P5) -- cycle;
            
             \foreach \i in {1,...,5} {
                 \pgfmathtruncatemacro{\j}{mod(\i, 5) + 1}
                 \coordinate (M\i) at ($(P\i)!0.5!(P\j)$);
                 \coordinate (N\i) at ($ (M\i)!0.5cm!-90:(P\j) $); 
                 \draw[red, ->, thick, line width=1.2pt] (M\i) -- (N\i);
                 \node at (M\i) [circle, fill=red, inner sep=1pt] {};
             }
        \end{tikzpicture}
    \caption{Illustration of degrees of freedom. We represent $\mathbf{D}_{\mathbf{V}}1$ with the blue dots, $\mathbf{D}_{\mathbf{V}}2$ with the red arrows.  }
    \label{DoFs}
\end{figure}

Correspondingly, as illustrated in Fig. \ref{DoFs}, the degrees of freedom $\mathbf{D}_{\mathbf{V}}$ for local space $\mathbf{U}_E$  include the following:
\begin{subequations}	\label{degrees of freedom}
    \begin{align}
             &\text{$\mathbf{D}_{\mathbf{V}}1$: the values of $\mathbf{v}$ at the vertices of the polygon $E$},	\label{dof1}		\\
             &\text{$\mathbf{D}_{\mathbf{V}}2$: $\frac{1}{|e|} \int_e \mathbf{v} \cdot \mathbf{n} \mathrm{d} s \quad \forall e \in \partial E$.}	\label{dof2}
    \end{align}
\end{subequations}

Prior to defining the global virtual element spaces, we must first define two essential projection operators that ensure the computability of discrete linear forms. For any $\mathbf{v} \in \mathbf{U}_E$, an energy projection $\Pi^{\nabla, E}$ onto $\mathbf{P}_1(E)$ is defined by
\begin{subequations}
    \begin{align}
        \left(\nabla\left(\mathbf{v}-\Pi^{\nabla, E} \mathbf{v}\right), \nabla \mathbf{p}_1\right)_E & =0 \quad \forall\, \mathbf{p}_1 \in \mathbf{P}_1(E), 	\label{energy projection 1}	\\
        P^{0,E} (\Pi^{\nabla, E}\mathbf{v} - \mathbf{v}) &= 0,
    \end{align}
\end{subequations}
where the projector $P^{0,E} \mathbf{v}:=\frac{1}{N^V_E} \sum^{N^V_E}_{i=1}\mathbf{v}(V_i)$ is  onto the space of constants. It is obvious that $\Pi^{\nabla, E}\mathbf{p}_1 = \mathbf{p}_1$ holds for any $\mathbf{p}_1 \in \mathbf{P}_1(E)$. 
\vspace{0.5em}
\begin{remark}
   \rm{ (The computability of $\Pi^{\nabla, E}$) By integrating by parts we can know that the term $(\nabla \mathbf{v}, \nabla \mathbf{p}_1)_E = (\mathbf{v}, \nabla \mathbf{p}_1 \cdot\mathbf{n}_E)_{\partial E}$, where $\nabla \mathbf{p}_1\cdot \mathbf{n}_E$ is obviously a constant vector. And there exits a decomposition $\mathbf{v} = (\mathbf{v}\cdot\mathbf{n}_E)\mathbf{n}_E + (\mathbf{v}\cdot\mathbf{t}_E)\mathbf{t}_E$ on boundary $\partial E$. On the one hand, since $\mathbf{v}\cdot\mathbf{n}_E \in P_2(e)$ for $e\in\partial E$, we can easily calculate its integral on each edge, relying solely on the degrees of freedom \eqref{dof1}-\eqref{dof2}. On the other hand, since $\mathbf{v}\cdot\mathbf{t}_E \in P_1(e)$ for $e\in\partial E$, we can easily calculate its integral on each edge by the trapezoidal rule and the vertex degrees of freedom \eqref{dof1}. Then the integral in \eqref{energy projection 1} is ordinary.}
\end{remark}
\vspace{0.5em}

We proceed to define a local $\mathbf{L}^2$-projection operator $\Pi^{0, E}$ from $\mathbf{U}_E $ onto $ \mathbf{P}_1(E)$, which is defined by
\begin{equation}		\label{L2 projection}
    \left(\Pi^{0, E}  \mathbf{v} - \mathbf{v}, \mathbf{p}_1\right)_E= 0 \quad \forall \mathbf{p}_1 \in \mathbf{P}_1(E),
\end{equation}
Clearly, the two projection operators possess the subsequent estimations
\begin{subequations}
   \begin{align}
        \|\Pi^{0,E} \mathbf{v}\|_{E} &\le \|\mathbf{v}\|_E,  \label{Pi0 bound} \\
        |\Pi^{\nabla,E} \mathbf{v}|_{1,E} &\le |\mathbf{v}|_{1, E}. 	\label{Pis bound}
   \end{align}
\end{subequations}

It is evident that the term $(\mathbf{v}, \mathbf{p_1})_E$ is uncomputable for any $\mathbf{v}\in \mathbf{U}_E$. This observation has prompted us to define a modified space in which $(\mathbf{v}, \mathbf{p}_1)_E$ becomes computable. With the help of enhanced technology \cite{Ahmad2013}, we first enlarge the space $\mathbf{U}_E$ to be
$$
\begin{aligned}
    \widehat{\mathbf{U}}_E:= & \left\{\mathbf{v} \in \mathbf{H}^1(E):\left.\mathbf{v}\right|_{\partial E} \in \mathbf{B}_1(\partial E), \Delta \mathbf{v}-\nabla s \in \mathcal{G}_1^{\perp}(E) \;\text { and } \;\nabla \cdot \mathbf{v} \in P_0(E)\right. \\
    & \text { for some } \left.s \in L^2(E)\right\} .
\end{aligned}
$$
We subsequently define the local virtual element space $\mathbf{V}_E$ as the restriction of $\widehat{\mathbf{U}}_E$ given by
\begin{equation}
   \mathbf{V}_E:=\left\{\mathbf{v} \in \widehat{\mathbf{U}}_E:\left(\mathbf{v}-\Pi^{\nabla} _E \mathbf{v}, \mathbf{p}_1\right)_E=0 \text { for any } \mathbf{p}_1 \in \mathcal{G}_1^{\perp}(E)\right\}.
\end{equation}
As mentioned  in \cite{Ahmad2013, Wanggang2023, xiong2024}, the dimension of  $\mathbf{V}_E$ is equal to that of original space $\mathbf{U}_E$, and these two spaces share the same degrees of freedom \eqref{DoFs}.
\vspace{0.5em}
\begin{remark}
  \rm{(The computability of $\Pi^{0, E}$) For any $\mathbf{p}_1 \in \mathbf{P}_1(E)$, there exists a unique decomposition  $\mathbf{p}_1 = \nabla s + \mathbf{g}$, where $s\in P_2(E)$ and $\mathbf{g}\in \mathcal{G}_1^{\perp}(E)$.  Furthermore, the constant $\nabla \cdot \mathbf{v} \in P_0(E)$ can be readily computed using  the divergence theorem along with the normal degree of freedom \eqref{dof2}. As demonstrated in the work \cite{Wanggang2021},  it is straightforward to explicitly express $\mathbf{v}|_e \cdot\mathbf{n}$ and $\mathbf{v}|_e \cdot\mathbf{t}$ on edge $e$, relying only on the degrees of freedom \eqref{dof1}-\eqref{dof2}. It follows from  integration by parts and the definition of $\Pi^{\nabla, E}$ that
   $$
   (\mathbf{v}, \mathbf{p}_1)_E = (\mathbf{v}, \nabla s + \mathbf{g})_E = -(\nabla\cdot \mathbf{v}, s)_E + (\mathbf{v}\cdot\mathbf{n}, s)_{\partial E} + (\Pi^{\nabla, E}\mathbf{v}, \mathbf{g})_E,
   $$
  whose right-hand side is evidently computable by using   the computable $\nabla\cdot\mathbf{v}$ and $\Pi^{\nabla, E}\mathbf{v}$ within $E$ and $\mathbf{v}|_{\partial E}\cdot \mathbf{n}$  on $\partial E$. Thereby the integral in \eqref{L2 projection} is ordinary.}
\end{remark}
\vspace{0.5em}
Finally, the global virtual element space for velocity is constructed by assembling the local spaces $\mathbf{V}_E$ as follows:
$$
\mathbf{V}_h:=\left\{\mathbf{v} \in \mathbf{H}_0^1(\Omega): \mathbf{v} \in \mathbf{V}_E\quad \forall E \in \mathcal{T}_h\right\}.
$$

Moreover, the approximated space $Q_h$ for the pressure  is defined as the piecewise constant space, denoted by
\begin{equation}
    Q_h := \left\{q\in L^2_0(\Omega):\; q\in {P}_{0}(E)\quad\forall E\in \mathcal{T}_h\right\}.
\end{equation}

We define the exact discrete kernel space $\mathbf{Z}_h$ by utilizing the fact that $\nabla\cdot \mathbf{V}_h \subset Q_h$, as follows:
\begin{equation}	\label{discrete kernel space}
    \mathbf{Z}_h:=\left\{\mathbf{v}_h \in \mathbf{V}_h: b\left(\mathbf{v}_h, q_h\right)=0 \quad\forall q_h \in Q_h\right\}=\left\{\mathbf{v}_h \in \mathbf{V}_h: \nabla\cdot \mathbf{v}_h=0\right\}.
\end{equation}

\subsection{Divergence-preserving reconstruction operator}
For each element $E\in \Omega_h$, we assume the existence of a subtriangulation that subdivides $E$ into $n_t$ regular triangles, with $n_t$ is a constant correspond to  $E$. A constrained Delaunay triangulation of the polygon yields a conforming simplicial submesh $\Omega_h^\star$ relative to $\Omega_h$, with each triangle $T \subset \Omega_h^\star$ uniquely corresponding to an element $E \in \Omega_h$ ($T \subset E$), and each $E \in \Omega_h$ composed of several triangles from $\Omega_h^\star$. Consider polygon \( E \) with vertices \( V_i \) (for \( i = 1, \ldots, N^V_E \)), ordered counterclockwise. Let \( \mathbf{n}_i^{\partial} \) denote the outward unit normal on edge \( e_i^{\partial} \) from \( V_i \) to \( V_{i+1} \). Using modulus \( N^V_E \) indexing, \( E \) is triangulated into \( N^V_E - 2 \) triangles \( \{T_i\}_{i=1}^{N^V_E-2} \) without extra vertices or edges. The \( N^V_E - 3 \) inner edges \( e_j^{o} \) (for \( j = 1, \ldots, N^V_E - 3 \)) have fixed normal $ \mathbf{n}_j^{0} $. The above notations are illustrated in Fig. \ref{subtriangulation}. 
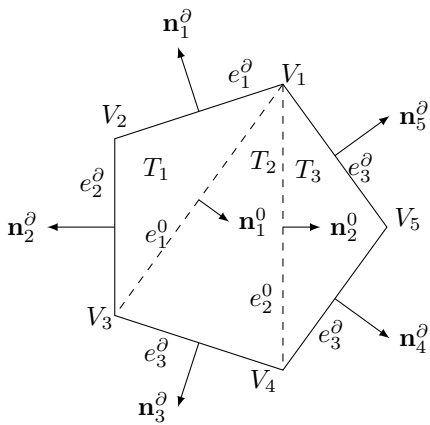
\begin{figure}	
    \centering 
    \begin{tikzpicture}
        \coordinate (A) at (72:2cm);
        \coordinate (B) at (144:2cm);
        \coordinate (C) at (216:2cm);
        \coordinate (D) at (288:2cm);
        \coordinate (E) at (360:2cm); 
        \draw (A) -- (B) -- (C) -- (D) -- (E) -- cycle;    
        \node at ($(A)+(40:0.2cm)$) {$V_1$};
        \node at ($(B)+(90:0.3cm)$) {$V_2$};
        \node at ($(C)+(190:0.2cm)$) {$V_3$};
        \node at ($(D)+(210:0.3cm)$) {$V_4$};
        \node at ($(E)+(380:0.3cm)$) {$V_5$};
        
       \draw[dashed] (A) -- (C);
       \draw[dashed] (A) -- (D);
       \coordinate (M1) at ($(A)!0.5!(B)$);  
       \coordinate (N1) at ($(M1)!0.9cm!-90:(B)$);
       \draw [-latex] (M1)--(N1) node[above] {$\mathbf{n}_1^{\partial}$};
       \draw ($(M1)!0.5!(A)$) node[above] {$e^\partial_1$};   
       \coordinate (M2) at ($(B)!0.5!(C)$);  
       \coordinate (N2) at ($(M2)!0.9cm!-90:(C)$);
       \draw [-latex] (M2)--(N2) node[left] {$\mathbf{n}_2^{\partial}$};
       \draw ($(M2)!0.5!(B)$) node[left] {$e^\partial_2$};
       
       \coordinate (M3) at ($(C)!0.5!(D)$);  
       \coordinate (N3) at ($(M3)!0.9cm!-90:(D)$);
       \draw [-latex] (M3)--(N3) node[left] {$\mathbf{n}_3^{\partial}$};
        \draw ($(M3)!0.5!(C)$) node[below] {$e^\partial_3$};
        
       \coordinate (M4) at ($(D)!0.5!(E)$);  
       \coordinate (N4) at ($(M4)!0.9cm!-90:(E)$);
       \draw [-latex] (M4)--(N4) node[right] {$\mathbf{n}_4^{\partial}$};
       \draw ($(M4)!0.5!(D)$) node[right] {$e^\partial_3$};
       
       \coordinate (M5) at ($(E)!0.5!(A)$);  
       \coordinate (N5) at ($(M5)!0.9cm!-90:(A)$);
       \draw [-latex] (M5)--(N5) node[right] {$\mathbf{n}_5^{\partial}$};
       \draw ($(M5)!0.5!(E)$) node[above] {$e^\partial_3$};
       
       \coordinate (M01) at ($(A)!0.5!(C)$);  
       \coordinate (N01) at ($(M01)!0.5cm!-90:(A)$);
       \draw [-latex] (M01)--(N01) node[right] {$\mathbf{n}_1^{0}$};
       \draw ($(M01)!0.5!(C)$) node[above] {$e^0_1$};
       
       \coordinate (M02) at ($(A)!0.5!(D)$);  
       \coordinate (N02) at ($(M02)!0.5cm!-90:(A)$);
       \draw [-latex] (M02)--(N02) node[right] {$\mathbf{n}_2^{0}$};
       \draw ($(M02)!0.5!(D)$) node[left] {$e^0_2$};
       
       \draw ($(B)!0.5!(M01)$) node {$T_1$};
       \draw ($(A)!0.5!(M01)$) node[below right] {$T_2$};
       \draw ($(M5)!0.5!(M02)$) node[above] {$T_3$};
       
    \end{tikzpicture}
    \caption{A subtriangulation of a pentagon $E$ and the corresponding notations.}  \label{subtriangulation}
\end{figure}

A local space $\mathcal{A}_E$ \cite{Wanggang2023} as 
$$
\mathcal{A}_E = \left\{\mathbf{v}\in \mathbf{H}(\dive, E):\; \mathbf{v}|_{T_i} \in \mathrm{RT}_0(T_i), \;\nabla\cdot \mathbf{v}\in P_0(E)\quad \forall \,T_i \in E\right\}, 
$$
where $\mathrm{RT}_0(T_i)$ denotes the usual lowest-order Raviart-Thomas finite element space in simplex $T_i$. Naturally, the  global space $\mathcal{A}_h$ is defined as 
$$
\mathcal{A}_h = \left\{\mathbf{v}\in \mathbf{H}(\dive, \Omega):\; \mathbf{v}|_E \in \mathcal{R}_E\quad \forall\,E\in\Omega_h\right\}.
$$ 
Then we define the divergence-preserving reconstruction operator $\mathcal{R}_h:\; \mathbf{V} \rightarrow \mathcal{A}_h$ that satisfies a set of conditions as follows
\begin{subequations}
    \begin{align}
        &\int_{e^{\partial}_i} \mathcal{R}_h \mathbf{v} \cdot \mathbf{n}_i^{\partial}\;\mathrm{ds} = \int_{e^{\partial}_i}  \mathbf{v} \cdot \mathbf{n}_i^{\partial}\;\mathrm{ds}\qquad \forall \; \text{boundary edge} \;e^{\partial}_i \in \partial E, 	\label{Rh 1}    	 \\
        &\int_{e^{0}_i} [\mathcal{R}_h \mathbf{v}] \cdot \mathbf{n}_i^{0}\;\mathrm{ds} = 0 \qquad \forall \; \text{inner edge} \;e^{0}_i \in E,	\label{Rh 2}		\\
        &\int_{T_1} \nabla\cdot \left(\mathcal{R}_h\mathbf{v}|_{T_i} - \mathcal{R}_h\mathbf{v}|_{T_1}\right) \mathrm{d\mathbf{x}} = 0\qquad \text{for}\;i =2,\cdots, N^V_E -2. 	\label{Rh 3}
    \end{align}
\end{subequations}
In \eqref{Rh 2}, the notation $[\cdot]$ denote the jump of function on an edge. It should be emphasized that the reconstruction operator $\mathcal{R}_h$ is explicitly computable only rely on  the basis function of the lowest-order Raviart-Thomas element and degrees of freedom \eqref{degrees of freedom} of virtual element. One can see the Appendix of \cite{Wanggang2023} for the implementation details of the reconstruction operator on a polygon. 

The reconstruction operator $\mathcal{R}_h$ satisfies the following properties \cite{Wanggang2023,Ye2021}.

\begin{lema}
    For all $\mathbf{v}\in \mathbf{V} $, the reconstruction operator has the following properties
    \begin{subequations}
        \begin{align}
            &\int_e \mathcal{R}_h \mathbf{v}\cdot \mathbf{n} \;\mathrm{ds} = \int_e \mathbf{v}\cdot\mathbf{n}\;\mathrm{ds}\qquad \forall \; \text{edge $e$ of } E,\\
            &\int_E \nabla\cdot\mathcal{R}_h \mathbf{v}\;\mathrm{d\mathbf{x}} = \int_E \nabla\cdot\mathbf{v}\;\mathrm{d\mathbf{x}}\qquad \forall\, E\in\Omega_h,	\label{reconstruction 2}\\
            &\|\mathcal{R}_h\mathbf{v} - \mathbf{v}\|\le C_{\mathcal{R}_1} h |\mathbf{v}|_1,	 \label{reconstruction 3}
        \end{align}
    \end{subequations}
    where $C_{\mathcal{R}_1}$ is a positive constant independent of $h$.
\end{lema}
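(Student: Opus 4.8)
My plan is to dispatch the two identities quickly from the defining conditions of $\mathcal{R}_h$ and the divergence theorem, and to spend the effort on the approximation bound \eqref{reconstruction 3}. For the edge identity, I note that each edge $e$ of the polygon $E$ is one of the boundary edges $e^\partial_i$ of the subtriangulation and carries a single normal flux degree of freedom; the claim is then precisely condition \eqref{Rh 1} read edge by edge. For the volume identity \eqref{reconstruction 2} I use that $\mathcal{R}_h\mathbf{v}\in\mathbf{H}(\dive,E)$ — which is exactly what the interior continuity \eqref{Rh 2} secures — so the divergence theorem gives $\int_E\dive\,\mathcal{R}_h\mathbf{v}\,\mathrm{d\mathbf{x}}=\int_{\partial E}\mathcal{R}_h\mathbf{v}\cdot\mathbf{n}\,\mathrm{ds}$. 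Applying the edge identity to each boundary edge turns the right-hand side into $\int_{\partial E}\mathbf{v}\cdot\mathbf{n}\,\mathrm{ds}$, and a second application of the divergence theorem, now to $\mathbf{v}$, returns $\int_E\dive\,\mathbf{v}\,\mathrm{d\mathbf{x}}$.

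The approximation estimate is the real content. The first ingredient I would establish is that $\mathcal{R}_h$ reproduces constant vector fields: a constant $\mathbf{c}$ already belongs to $\mathcal{A}_E$, is continuous across interior edges, and has vanishing divergence, hence it satisfies its own defining relations \eqref{Rh 1}--\eqref{Rh 3}; the uniqueness of the reconstruction in $\mathcal{A}_E$ then forces $\mathcal{R}_h\mathbf{c}=\mathbf{c}$ (in fact the same reasoning shows $\mathcal{R}_h$ is a local projection onto $\mathcal{A}_E$). Writing $\bar{\mathbf{v}}:=|E|^{-1}\int_E\mathbf{v}\,\mathrm{d\mathbf{x}}$ and using linearity together with $\mathcal{R}_h\bar{\mathbf{v}}=\bar{\mathbf{v}}$, I would split, on each element, $\mathcal{R}_h\mathbf{v}-\mathbf{v}=\mathcal{R}_h(\mathbf{v}-\bar{\mathbf{v}})-(\mathbf{v}-\bar{\mathbf{v}})$, so that it suffices to bound each term in $\mathbf{L}^2(E)$ by $Ch_E|\mathbf{v}|_{1,E}$ and sum over $\Omega_h$. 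The term $\|\mathbf{v}-\bar{\mathbf{v}}\|_{0,E}$ is controlled directly by the Poincar\'e--Wirtinger inequality.

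For $\|\mathcal{R}_h(\mathbf{v}-\bar{\mathbf{v}})\|_{0,E}$ I would expand the piecewise Raviart--Thomas field in the $\mathrm{RT}_0$ edge basis on the subtriangulation, whose members have $\mathbf{L}^2(T)$-norms of order $h_E$, so that $\|\mathcal{R}_h(\mathbf{v}-\bar{\mathbf{v}})\|_{0,E}$ is bounded by $h_E$ times the Euclidean size of the edge-flux vector of $\mathcal{R}_h(\mathbf{v}-\bar{\mathbf{v}})$. The boundary fluxes are, by \eqref{Rh 1}, exactly $\int_{e^\partial_i}(\mathbf{v}-\bar{\mathbf{v}})\cdot\mathbf{n}\,\mathrm{ds}$, and a trace inequality followed by the Poincar\'e estimate bounds each of them by $Ch_E|\mathbf{v}|_{1,E}$; consequently this term is even of order $h_E^2|\mathbf{v}|_{1,E}$ and is harmless.

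The step I expect to be the main obstacle is the stability of $\mathcal{R}_h$, because the fluxes across the interior edges $e^0_j$ are not prescribed by \eqref{Rh 1} but are fixed only implicitly, through the $\mathbf{H}(\dive)$-continuity and the single-valued-divergence constraints that define $\mathcal{A}_E$ (equivalently \eqref{Rh 2}--\eqref{Rh 3}). I must show that the corresponding local linear system is uniformly invertible, so that the interior fluxes, and hence the whole field, are controlled by the boundary fluxes and the element divergence. The clean way to secure this uniformity is a scaling argument built on Assumption \ref{assumption regularity}: the shortest-edge and star-shapedness conditions bound the number of vertices $N^V_E$ and render the element $E$ rescaled by $h_E^{-1}$, together with its subtriangulation, shape-regular, so that on this reference configuration $\mathcal{R}_h$ is a bounded, constant-reproducing operator; the Bramble--Hilbert (Deny--Lions) lemma there yields the estimate, which transfers back to \eqref{reconstruction 3} via the Piola map with the gain of one power of $h_E$. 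Alternatively, one may read off the uniform control of the interior degrees of freedom from the explicit construction of $\mathcal{R}_h$ in the Appendix of \cite{Wanggang2023}.
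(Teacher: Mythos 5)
The paper never proves this lemma: it is stated with a citation to \cite{Wanggang2023,Ye2021} and no argument follows, so any self-contained proof is by construction a different route from the paper's. Within that route, your handling of the two identities is correct and complete: the edge identity is precisely \eqref{Rh 1} read on the boundary edges $e^\partial_i$, and the volume identity follows from the divergence theorem applied to $\mathcal{R}_h\mathbf{v}\in\mathbf{H}(\dive,E)$ (which the interelement continuity \eqref{Rh 2} guarantees), the edge identity on $\partial E$, and the divergence theorem applied to $\mathbf{v}$. Your architecture for \eqref{reconstruction 3} is also the standard correct one: $\mathcal{R}_h$ reproduces constants (a constant field lies in $\mathcal{A}_E$ and satisfies \eqref{Rh 1}--\eqref{Rh 3} with its own data, so uniqueness of the reconstruction forces $\mathcal{R}_h\mathbf{c}=\mathbf{c}$), then split $\mathcal{R}_h\mathbf{v}-\mathbf{v}=\mathcal{R}_h(\mathbf{v}-\bar{\mathbf{v}})-(\mathbf{v}-\bar{\mathbf{v}})$, use Poincar\'e--Wirtinger on the second piece, and a stability bound for $\mathcal{R}_h$ on the first.

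Two caveats, one substantive. The substantive one is the step you yourself flag: the uniform stability of $\mathcal{R}_h$, i.e.\ uniform invertibility of the square local system (the $N^V_E$ flux conditions \eqref{Rh 1} plus the $N^V_E-3$ divergence conditions \eqref{Rh 3} against the $2N^V_E-3$ single-valued edge fluxes) with control of the interior fluxes by the boundary data. This is the entire analytic content of \eqref{reconstruction 3}, and your proposal does not establish it; it is either sketched or deferred to the appendix of \cite{Wanggang2023}. The sketch as written also needs repair: polygons admit no single reference configuration, so after rescaling by $h_E^{-1}$ one cannot simply invoke Bramble--Hilbert ``on the reference configuration''; one must prove a bound that is uniform over the whole family of admissible unit-size polygons and subtriangulations permitted by Assumption \ref{assumption regularity} (bounded $N^V_E$, edges bounded below, star-shapedness), e.g.\ by a compactness argument. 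Since the paper itself disposes of the whole lemma by citation, deferring this one step to the same source is not a worse position than the paper's, but as a proof it is incomplete. The second caveat is a bookkeeping slip: your claim that $\|\mathcal{R}_h(\mathbf{v}-\bar{\mathbf{v}})\|_{0,E}$ is of order $h_E^2|\mathbf{v}|_{1,E}$ mixes two normalizations of the $\mathrm{RT}_0$ basis. If the basis is dual to total fluxes $\int_e\cdot\,\mathbf{n}\,\mathrm{ds}$, its members have $\mathbf{L}^2(T)$-norm of order one (not $h_E$) while the total fluxes of $\mathbf{v}-\bar{\mathbf{v}}$ are $O(h_E|\mathbf{v}|_{1,E})$ by trace plus Poincar\'e; if it is dual to mean fluxes, the basis scales like $h_E$ but the degrees of freedom are then only $O(|\mathbf{v}|_{1,E})$. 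Either way that contribution is $O(h_E|\mathbf{v}|_{1,E})$, not $O(h_E^2|\mathbf{v}|_{1,E})$ --- which is still exactly what \eqref{reconstruction 3} requires, so your conclusion survives the correction.
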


Moreover, we can derive the follow result from  \eqref{reconstruction 3} and inverse inequality  in \cite{Chenlong2018some}:
\begin{equation}
    |\mathcal{R}_h\mathbf{v} - \mathbf{v}|_1 \le C_{\mathcal{R}_2} |\mathbf{v}|_1,	 \label{reconstruction 4 for h1 estimate}
\end{equation}
where $C_{\mathcal{R}_2}$ is a positive constant independent of $h$.

\subsection{Discrete scheme}	\label{discrete linear form}
For all $\mathbf{w}_h, \mathbf{v}_h \in \mathbf{V}_E$, we first define the computable local discrete linear form by
$$
\begin{aligned}
    a_h^E\left(\mathbf{w}_h, \mathbf{v}_h\right)= &\left(\Pi^{0, E} \nabla \mathbf{w}_h, \Pi^{0, E} \nabla \mathbf{v}_h\right)+S^{\nabla}_E\left(\left(\mathbf{I}-\Pi^{\nabla, E}\right) \mathbf{w}_h,\left(\mathbf{I}-\Pi^{\nabla, E}\right) \mathbf{v}_h\right), \\
    & + \left(\kappa^{-1}\Pi^{0, E} \mathbf{w}_h, \Pi^{0, E}  \mathbf{v}_h\right) + S^{0}_E\left(\left(\mathbf{I}-\Pi^{0, E}\right) \mathbf{w}_h,\left(\mathbf{I}-\Pi^{0, E}\right) \mathbf{v}_h\right),
\end{aligned}
$$

where the symmetric stabilizing bilinear forms $S^{\nabla}_E(\cdot, \cdot)$ and $ S^{0}_E(\cdot, \cdot)$  satisfies
\begin{subequations}	\label{stabilzation eq}
    \begin{align}
        C_1^\nabla \|\nabla \mathbf{v}_h\|_E^2 \le S^{\nabla}_E\left(\mathbf{v}_h, \mathbf{v}_h\right)\le C_2^\nabla \|\nabla \mathbf{v}_h\|_E^2   \quad \forall \,\mathbf{v}_h \in \mathbf{V}_E \;\text{and} \; \Pi^\nabla_E \mathbf{v}_h = 0, \\
        C_1^0 \|\kappa^{-1/2} \mathbf{v}_h\|_E^2 \le S^{0}_E\left(\mathbf{v}_h, \mathbf{v}_h\right)\le C_2^0 \|\kappa^{-1/2}\mathbf{v}_h\|_E^2   \quad \forall \,\mathbf{v}_h \in \mathbf{V}_E \;\text{and} \; \Pi^0_E \mathbf{v}_h = 0,
    \end{align}
\end{subequations}
for some positive constants $C_1^\nabla, C_2^\nabla, C_1^0 $ and $C_2^0$ independent of $h_E$. Furthermore, one can easily verify $a^E_h(\cdot,\cdot)$ satisfies the consistency  and the stability properties
\begin{subequations}
    \begin{align}
            a^E_h(\mathbf{q},\mathbf{v}_h) = a^E(\mathbf{q},\mathbf{v}_h)\qquad \forall\,\mathbf{q} \in [\mathbb{P}_1(E)]^2,\,\mathbf{v}_h\in\mathbf{V}_h, 	\label{consistency ah}		\\
            \alpha_* a^E(\mathbf{v}_h, \mathbf{v}_h) \le a^E_h(\mathbf{v}_h, \mathbf{v}_h) \le \alpha^* a^E(\mathbf{v}_h, \mathbf{v}_h)\qquad \forall\,\mathbf{v}_h\in\mathbf{V}_h, \label{stability ah}
    \end{align}
\end{subequations}
where two positive constants $\alpha_*:=\min\{1, C^\nabla_1, C^0_1\}$ and $\alpha^*:=\max\{1,C^\nabla_2, C^0_2\}$ are independent of $h_E$. 

Naturally, $a_h$ is a global extension of $a_E$, i.e., 
$$
a_h(\mathbf{w}_h, \mathbf{v}_h) = \sum_{E \in \Omega_h} a^E_h(\mathbf{w}_h, \mathbf{v}_h)\qquad \forall\,\mathbf{w}_h, \mathbf{v}_h \in \mathbf{V}_h.
$$
The coercivity and continuity of $a_h(\cdot,\cdot)$ can be immediately obtained by the equivalent properties \eqref{stabilzation eq}, Cauchy-Schwartz inequality and the definition of energy norm $\interleave\cdot\interleave$ in \eqref{energy norm}, i.e., there exits two positive constant $C^*$ and  $C_* (= \alpha_*)$ such that
\begin{subequations}	\label{ah properties}
    \begin{align}
        &a_h(\mathbf{w}_h, \mathbf{v}_h) \le C^* \interleave\mathbf{w}_h\interleave\,\interleave\mathbf{v}_h\interleave\qquad \forall\,\mathbf{w}_h, \mathbf{v}_h \in \mathbf{V}_h,	\label{continuity ah}\\
        &a_h(\mathbf{v}_h, \mathbf{v}_h) \ge C_*\interleave\mathbf{v}_h\interleave^2\qquad \forall\, \mathbf{v}_h \in \mathbf{V}_h.	\label{coercivity ah}
    \end{align}
\end{subequations}
Following the same approach of Lemma 4.3 in \cite{veiga2010}, the discrete inf-sup condition for the bilinear form $d(\cdot,\cdot)$ holds true, i.e., $\exists \;\text{constant}\; \widetilde{\gamma}>0$ such that 
\begin{equation}	\label{discrete inf-sup}
    \sup_{\mathbf{v}_h\in \mathbf{V}_h/ {\{\mathbf{0}\}}} \frac{d(\mathbf{v}_h, q_h)}{\interleave \mathbf{v}_h\interleave} \ge \widetilde{\gamma} \|q_h\|\qquad \forall\,q_h \in Q_h.
\end{equation}

Moreover, utilizing the projection property \eqref{reconstruction 3} and Friedrichs inequality \eqref{Friedrichs}, we can deduce that for any $\mathbf{v}_h\in \mathbf{V}_h$, the following holds:
\begin{equation}	\label{Rh vh}
    \begin{aligned}
        \|\mathcal{R}_h \mathbf{v}_h\| =&\; \|\left(\mathcal{R}_h \mathbf{v}_h - \mathbf{v}_h + \mathbf{v}_h\right)\| \le (C_{\mathcal{R}_1} h + 1) \|\mathbf{v}_h\|_1 \\
        \le& \;C_F (C_{\mathcal{R}_1} h + 1) \|\nabla\mathbf{v}_h\|\\
        \le& \;\gamma^\star \|\nabla\mathbf{v}_h\|,
    \end{aligned}
\end{equation}
where we have used the boudedness of the mesh size at the last step, therefore, $\gamma^\star$ is a positive constant independent of $\kappa$ and $h$. 

Then we  construct a really pressure-robust virtual element scheme for Brinkman problem \eqref{Brinkman equations}: Find $(\mathbf{u}_h, p_h)\in\mathbf{V}_h\times Q_h$
\begin{subequations}	\label{discrete scheme}
    \begin{align}
        \nu a_h(\mathbf{u}_h, \mathbf{v}_h) + b(\mathbf{v}_h, p_h) = \left(\mathbf{f}, \mathcal{R}_h\mathbf{v}_h\right)\qquad \forall\,\mathbf{v}_h\in\mathbf{V}_h,	\label{discrete scheme 1}		\\
        b(\mathbf{u}_h, q_h) =0\qquad\forall\, q_h\in Q_h.	\label{discrete scheme 2}
    \end{align}
\end{subequations}
 Combining the properties \eqref{ah properties}-\eqref{discrete inf-sup}, one can get an immediate consequence that there exists a unique solution $\left(\mathbf{u}_h, p_h\right)\in\mathbf{V}_h\times Q_h$ of scheme \eqref{discrete scheme}. Moreover, since $\nabla\cdot \mathbf{V}_h \subset Q_h$,  we can easily deduce  that $\nabla\cdot \mathbf{u}_h =0$ from \eqref{discrete scheme 2}, which  indicates that the scheme \eqref{discrete scheme} is exactly divergence-free, ensuring the mass conservation of the system. Furthermore, by utilizing the discrete kernel space $\mathbf{Z}_h$ defined in \eqref{discrete kernel space}, the problem \eqref{discrete scheme} can be formulated into the equivalent kernel form:
\begin{equation}
     \nu a_h(\mathbf{u}_h, \mathbf{v}_h)  = \left(\mathbb{P}(\mathbf{f}), \mathcal{R}_h\mathbf{v}_h\right)\qquad \forall\,\mathbf{v}_h\in\mathbf{Z}_h,	\label{discrete kernel scheme}
\end{equation}
where we have used the fact that $\nabla\cdot\mathcal{R}_h \mathbf{v}_h =0$ for any $\mathbf{v}\in \mathbf{Z}_h$ from \eqref{reconstruction 2} and the Helmholtz decomposition \eqref{Helmholtz}. Moreover, we define the notations $(\Pi^{0,h}\cdot)|_E := \Pi^{0,E}\cdot$ and  $(\Pi^{\nabla,h}\cdot)|_E := \Pi^{\nabla,E}\cdot$, and give the  following remark.
\begin{remark}
    Corresponding to $\left(\mathbf{f}, \mathcal{R}_h\mathbf{v}_h\right)$ in  the scheme \eqref{discrete scheme},  the right-hand side term in the standard VEM scheme is constructed as $(\mathbf{f}, \Pi^{0,h}\mathbf{v}_h)$. As discussed in \cite{Frerichs2020}, the projection operator $\Pi^{0,h}$ changes the divergence, thus destroying the orthogonality between the divergence-free function and the gradient force. This leads to the occurrence of the locking phenomenon as $\nu\rightarrow 0$. Fortunately, we have constructed a divergence-preserving reconstruction operator $\mathcal{R}_h$ such that $\nabla\cdot\mathcal{R}_h \mathbf{v}_h =0$ for any $\mathbf{v}_h\in \mathbf{Z}_h$. Thus we call \eqref{discrete scheme} as  a really pressure-robust virtual element scheme, and the fact will also be proved that prior error estimate of velocity is independent of continue pressure $p$ and viscosity $\nu$ in  subsequent analysis.
\end{remark}

Similar to \eqref{continuous bound of velocity}, we choosing $\mathbf{v}_h = \mathbf{u}$ from \eqref{discrete kernel scheme}, recalling the coercivity property \eqref{coercivity ah} and \eqref{Rh vh}, one can deduce the 
$$
\begin{aligned}
    \nu C_* \interleave\mathbf{u}_h\interleave^2 \le &\;\nu a_h(\mathbf{u}_h,\mathbf{u}_h) \le \|\mathbb{P}(\mathbf{f})\|\cdot \|\mathcal{R}_h\mathbf{u}_h\|\\
    \le & \;\gamma^\star \|\mathbb{P}(\mathbf{f})\|\cdot \|\nabla\mathbf{u}_h\|\\
    \le &\; \gamma^\star \|\mathbb{P}(\mathbf{f})\|\cdot \interleave\mathbf{u}_h\interleave.
\end{aligned}
$$
After eliminating $\interleave\mathbf{u}_h\interleave$ from both sides, one can immediately deduce the upper bound for the velocity $\mathbf{u}_h$:
\begin{equation}
    \interleave\mathbf{u}_h\interleave \le \frac{\gamma^\star}{\nu C_*} \|\mathbb{P}(\mathbf{f})\|.
\end{equation}

\section{A priori error analysis}
In this section, we derive the optimal error estimates for the velocity error in the energy norm and the pressure error in the $L^2$ norm. Firstly, we provide the following classical interpolation estimates holds for the spaces $\mathbf{V}_h$ and $Q_h$. 
\begin{lema}	\label{lemma calssical interpolation}
    Let $(\mathbf{v}, p) \in\left([H^{2}(\Omega)]^2 \cap \mathbf{V}\right) \times\left(H^1 (\Omega) \cap Q\right)$. Then there exists an approximation $\left(\mathbf{v}_I, p_I\right) \in \mathbf{V}_h \times Q_h$ such that
    $$
    \begin{aligned}
        \left\|\mathbf{v}-\mathbf{v}_I\right\|+h\left|\mathbf{v}-\mathbf{v}_I\right|_{1} & \leq C_{I_1} h^{2}\|\mathbf{v}\|_{2}, \\
        \left\|p-p_I\right\| & \leq C_{I_2} h\|p\|_1,
    \end{aligned}
    $$
    where the positive constants $C_{I_1}$ and $C_{I_2}$ are independent of $h$.
\end{lema}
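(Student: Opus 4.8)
The plan is to separate the pressure and velocity estimates, since the pressure bound is essentially immediate while the velocity bound carries all the technical weight. For the pressure, because $Q_h$ consists of piecewise constants I would simply take $p_I$ to be the elementwise $L^2$-projection, $p_I|_E = \frac{1}{|E|}\int_E p\,\mathrm{d\mathbf{x}}$. On each star-shaped element (Assumption \ref{assumption regularity}) a standard Bramble--Hilbert/Poincar\'e argument gives $\|p - p_I\|_E \le C h_E |p|_{1,E}$, and summing over $E \in \Omega_h$ yields $\|p - p_I\| \le C_{I_2} h \|p\|_1$ with $C_{I_2}$ independent of $h$.

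For the velocity, I would first define $\mathbf{v}_I \in \mathbf{V}_h$ locally: on each $E$, let $\mathbf{v}_I|_E \in \mathbf{V}_E$ be the unique function sharing the degrees of freedom \eqref{degrees of freedom} of $\mathbf{v}$. These are well defined because $\mathbf{v}\in[H^2(\Omega)]^2\hookrightarrow[C^0(\overline{\Omega})]^2$ in two dimensions (so the vertex values make sense) and the edge normal-moments \eqref{dof2} are controlled by traces. Since the shared-edge degrees of freedom agree from both sides, $\mathbf{v}_I$ is $\mathbf{H}_0^1(\Omega)$-conforming, hence $\mathbf{v}_I\in\mathbf{V}_h$. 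The local operator $I_E:\mathbf{v}\mapsto\mathbf{v}_I|_E$ is linear, and because $\mathbf{P}_1(E)\subset\mathbf{V}_E$ (a linear field is harmonic, has constant divergence, and satisfies the enhancement constraint trivially) together with unisolvence of the degrees of freedom, it reproduces linear polynomials: $I_E\mathbf{p}_1=\mathbf{p}_1$ for all $\mathbf{p}_1\in\mathbf{P}_1(E)$.

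The core estimate then follows a standard comparison argument on each element. By the Bramble--Hilbert lemma there is $\mathbf{v}_\pi\in\mathbf{P}_1(E)$ with
\[
\|\mathbf{v}-\mathbf{v}_\pi\|_E + h_E|\mathbf{v}-\mathbf{v}_\pi|_{1,E} \le C h_E^2 |\mathbf{v}|_{2,E}.
\]
Writing $\mathbf{v}-\mathbf{v}_I = (\mathbf{v}-\mathbf{v}_\pi) - I_E(\mathbf{v}-\mathbf{v}_\pi)$, using polynomial reproduction $I_E\mathbf{v}_\pi=\mathbf{v}_\pi$, the triangle inequality reduces everything to bounding $I_E(\mathbf{v}-\mathbf{v}_\pi)$. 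The hard part will be the \emph{stability of the interpolation operator}: a scale-sensitive bound of the form $\|I_E\mathbf{w}\|_E + h_E|I_E\mathbf{w}|_{1,E} \le C(\|\mathbf{w}\|_E + h_E|\mathbf{w}|_{1,E})$ for $\mathbf{w}\in[H^2(E)]^2$. This is established through the norm equivalence between a virtual element function and its degree-of-freedom vector: one bounds $|I_E\mathbf{w}|_{1,E}$ (and $\|I_E\mathbf{w}\|_E$ after the usual scaling/inverse step) by an appropriately scaled Euclidean norm of the degrees of freedom of $\mathbf{w}$, and then controls the vertex values by Sobolev embedding and trace inequalities and the edge moments by trace inequalities, each step using the shape-regularity of Assumption \ref{assumption regularity} (the lower bound on the edge-to-diameter ratio and star-shapedness) to keep the constants independent of $h$.

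Finally I would combine the pieces as $\|\mathbf{v}-\mathbf{v}_I\|_E + h_E|\mathbf{v}-\mathbf{v}_I|_{1,E} \le C(\|\mathbf{v}-\mathbf{v}_\pi\|_E + h_E|\mathbf{v}-\mathbf{v}_\pi|_{1,E}) \le C h_E^2|\mathbf{v}|_{2,E}$, and sum the squares over all $E\in\Omega_h$ (the elements partition $\Omega$, so the local $L^2$ norms and broken $H^1$ seminorms add up exactly) to obtain $\|\mathbf{v}-\mathbf{v}_I\| + h|\mathbf{v}-\mathbf{v}_I|_1 \le C_{I_1}h^2\|\mathbf{v}\|_2$ with $C_{I_1}$ independent of $h$. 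The genuinely nontrivial ingredient is the interpolation-stability and degree-of-freedom norm-equivalence step; the remainder is Bramble--Hilbert approximation and bookkeeping.
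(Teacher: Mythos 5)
The paper itself offers no proof of this lemma: it is quoted as a classical VEM interpolation result, and your reconstruction follows the standard route found in the literature (degree-of-freedom interpolant, $\mathbf{P}_1(E)\subset\mathbf{V}_E$ and polynomial reproduction, Bramble--Hilbert comparison, interpolation stability via the equivalence between a virtual function and its DOF vector). The pressure half is correct as written, and the architecture of the velocity half is the right one.

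There is, however, one step that would fail as stated: your stability bound $\|I_E\mathbf{w}\|_E + h_E|I_E\mathbf{w}|_{1,E} \le C\left(\|\mathbf{w}\|_E + h_E|\mathbf{w}|_{1,E}\right)$ for $\mathbf{w}\in[H^2(E)]^2$ is false with $C$ depending only on shape regularity. In two dimensions $H^1(E)$ does not embed into $C^0(\overline{E})$, so there exist smooth (hence $H^2$) functions with uniformly bounded $\|\mathbf{w}\|_E + h_E|\mathbf{w}|_{1,E}$ whose value at a fixed vertex is arbitrarily large. For such a sequence the vertex degree of freedom of $I_E\mathbf{w}$ blows up while the remaining DOFs stay bounded (edge moments are trace-controlled), and then the finite-dimensional norm equivalence on $\mathbf{V}_E$ --- the very equivalence you invoke --- forces $\|I_E\mathbf{w}\|_E\to\infty$ while your right-hand side stays bounded. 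The scaled Sobolev embedding that controls vertex values unavoidably brings in the second-order seminorm, $\|\mathbf{w}\|_{L^\infty(E)} \le C\left(h_E^{-1}\|\mathbf{w}\|_E + |\mathbf{w}|_{1,E} + h_E|\mathbf{w}|_{2,E}\right)$, so the correct stability statement is $\|I_E\mathbf{w}\|_E + h_E|I_E\mathbf{w}|_{1,E} \le C\left(\|\mathbf{w}\|_E + h_E|\mathbf{w}|_{1,E} + h_E^2|\mathbf{w}|_{2,E}\right)$. The repair costs nothing in your argument: you apply stability to $\mathbf{w}=\mathbf{v}-\mathbf{v}_\pi$, and since $\mathbf{v}_\pi\in\mathbf{P}_1(E)$ has vanishing second derivatives, $|\mathbf{v}-\mathbf{v}_\pi|_{2,E}=|\mathbf{v}|_{2,E}$, so the extra term contributes exactly the desired $Ch_E^2|\mathbf{v}|_{2,E}$ and the final estimate, after summing over elements, is unchanged. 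With that correction (and the trivial remark that your elementwise-average $p_I$ has zero global mean, hence genuinely lies in $Q_h$), the proof is complete.
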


Then we state the following classical approximation result for $\mathbf{P}_k(E)$ space on local star-shaped domains:
\begin{lema}		\label{lemma calssical polynomial estimate}
    Let $E \in \Omega_h$, and let two real numbers $s, p$ with $0 \leq s \leq 1$ and $1 \leq p \leq \infty$. Then for all $\mathbf{u} \in\left[H^{s+1}(\Omega)\right]^2$, there exists a polynomial function $\mathbf{u}_\pi \in$ $\mathbf{P}_k(E)$, such that
    $$
    \left\|\mathbf{u}-\mathbf{u}_\pi\right\|_{L^p(E)}+h_E\left|\mathbf{u}-\mathbf{u}_\pi\right|_{W^{1, p}(E)} \leq C_{\pi} h_E^{s+1}|\mathbf{u}|_{W^{s+1, p}(E)},
    $$ 
    where the positive constant $C_\pi$ independent of $h$.
\end{lema}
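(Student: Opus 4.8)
The plan is to construct the approximant $\mathbf{u}_\pi$ componentwise from the averaged (Sobolev) Taylor polynomial associated with the ball guaranteed by the star-shapedness hypothesis, to establish the two integer-order endpoints $s=0$ and $s=1$ by the Bramble--Hilbert lemma, and then to fill in the non-integer values $s\in(0,1)$ by real interpolation of Sobolev spaces. Since the estimate is purely local on a single element $E$, I would work on one scalar component $u$ of $\mathbf{u}$ throughout and recombine the (finitely many) components at the very end.

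First I would invoke Assumption \ref{assumption regularity}(2): every $E\in\Omega_h$ is star-shaped with respect to a ball $B=B(\mathbf{x}_E,\rho h_E)$. Fixing a scalar component $u$, I would define $Qu =: u_\pi$ as the degree-one averaged Taylor polynomial
$$
u_\pi(\mathbf{x}) = \int_B \big( u(\mathbf{y}) + \nabla u(\mathbf{y})\cdot(\mathbf{x}-\mathbf{y}) \big)\, \phi(\mathbf{y})\, \mathrm{d}\mathbf{y},
$$
where $\phi$ is a fixed nonnegative cut-off supported in $B$ with unit integral, scaled to $B$. Since $u_\pi\in P_1(E)\subseteq P_k(E)$ (using $k\ge 1$), it is an admissible choice, and the operator $T:=\mathrm{I}-Q$ reproduces, hence annihilates, all of $P_1(E)$.

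Next I would establish the two integer endpoints. Because $E$ is star-shaped with respect to $B$ with chunkiness parameter controlled by $\rho$, the Bramble--Hilbert / Dupont--Scott estimate yields, for $j\in\{0,1\}$ and $m\in\{1,2\}$,
$$
|u - u_\pi|_{W^{j,p}(E)} \le C\, h_E^{m-j}\, |u|_{W^{m,p}(E)},
$$
with $C$ depending only on $\rho$, $k$ and $p$; the powers of $h_E$ are dictated by homogeneity of the seminorms together with the scaling of $B$, so $C$ is independent of $h_E$. Combining $j=0$ and $j=1$ gives
$$
\|u-u_\pi\|_{L^p(E)} + h_E\,|u-u_\pi|_{W^{1,p}(E)} \le C\, h_E^{m}\, |u|_{W^{m,p}(E)}, \qquad m\in\{1,2\},
$$
which are precisely the $s=0$ and $s=1$ cases of the claim.

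Finally, to reach fractional $s\in(0,1)$ I would interpolate. Writing $X_E$ for the space with norm $\|\cdot\|_{L^p(E)}+h_E\,|\cdot|_{W^{1,p}(E)}$, the endpoints say $T:W^{m,p}(E)\to X_E$ is bounded with norm $Ch_E^{m}$ for $m=1,2$. Real (K-method) interpolation with parameter $\theta=s$, together with the identification $(W^{1,p}(E),W^{2,p}(E))_{s,p}=W^{s+1,p}(E)$ and the fact that $T$ annihilates $P_1\supseteq P_0$ (so only a seminorm survives on the right), then gives
$$
\|u-u_\pi\|_{L^p(E)} + h_E\,|u-u_\pi|_{W^{1,p}(E)} \le C\, (h_E)^{1-s}(h_E^{2})^{s}\, |u|_{W^{s+1,p}(E)} = C\, h_E^{s+1}\,|u|_{W^{s+1,p}(E)}.
$$
Summing over the vector components and collecting constants into $C_\pi$ proves the lemma. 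I expect the fractional step to be the main obstacle: one must check that the interpolation identity holds \emph{on $E$} with equivalence constants uniform in $E$, i.e.\ independent of $h_E$ and depending on the shape only through $\rho$. This uniformity is obtained by rescaling $E$ to unit diameter, where the Sobolev--Slobodeckij seminorm scales by a fixed power of $h_E$, applying the fixed-domain interpolation identity there, and scaling back.
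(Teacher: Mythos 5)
The paper never proves this lemma: it is quoted as a classical polynomial-approximation result on star-shaped domains (the Dupont--Scott/Brenner--Scott theory), so there is no internal proof to compare against, and your argument --- the averaged Taylor polynomial over the ball from Assumption \ref{assumption regularity}, Bramble--Hilbert bounds at the integer endpoints $s=0,1$ for the single operator $\mathrm{I}-Q$, and real interpolation to reach $s\in(0,1)$ --- is precisely the standard proof underlying that citation, and it is sound. You also correctly isolated the only delicate point: the identification $(W^{1,p}(E),W^{2,p}(E))_{s,p}=W^{1+s,p}(E)$ and the passage from full norms to seminorms (via invariance of $\mathrm{I}-Q$ on $P_1$ together with a fractional Deny--Lions bound) must hold with constants depending only on $\rho$, which your rescaling-to-unit-diameter argument delivers because Assumption \ref{assumption regularity} gives all elements a uniformly bounded Lipschitz character and number of edges.
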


 Based on the above preparations, the optimal a priori error estimate of the pressure-robust VEM scheme \eqref{discrete scheme} is given by the following theorem.
 
 \begin{thm}	\label{thm prior u1 and p}
     Assume $(\mathbf{u}, p) \in\left([H^{2}(\Omega)]^2 \cap \mathbf{V}\right) \times\left(H^1 (\Omega) \cap Q\right)$ and the Assumption \ref{assumption regularity} holds, let $(\mathbf{u},p)$ and $(\mathbf{u}_h,p_h)$ is solutions of the continue problem \eqref{continue weak formulation} and discrete problem \eqref{discrete scheme}, respectively. Then its satisfies the priori error estimates
     \begin{subequations}
         \begin{align}
             \interleave \mathbf{u} - \mathbf{u}_h\interleave \le \widehat{C} h, \label{priori estimate u}\\
             \|p-p_h\|\le \widetilde{C}h, \label{priori estimate p}
         \end{align}
     \end{subequations}
where the positive constant $\widehat{C}:= \frac{C_{\mathcal{R}_1}}{C_*} \left(\|\mathbf{u}\|_2 + \|\kappa^{1/2}\mathbf{u}\|\right) +  \left(\frac{C_{\mathcal{T}_2}}{C_*} +  C_{I_1} + C_{I_1}\|\kappa^{-1/2}\|_\infty\right) \|\mathbf{u}\|_2 $ is independent of $h$, $\nu$ and pressure $p$, and the $\widetilde{C}:= C_{I_2} \|p\|_1 + \nu\frac{ C_{s}}{\widetilde{\gamma}}$  independent of $h$.
 \end{thm}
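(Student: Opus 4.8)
The plan is to split the velocity error with a divergence-free interpolant, reduce the analysis to the discrete kernel $\mathbf{Z}_h$, and let the reconstruction operator expose the pressure-robust cancellation. First I would invoke Lemma~\ref{lemma calssical interpolation} to obtain $\mathbf{u}_I\in\mathbf{V}_h$ and check that in fact $\mathbf{u}_I\in\mathbf{Z}_h$: because the degrees of freedom $\mathbf{D}_{\mathbf{V}}2$ reproduce the edge fluxes $\int_e\mathbf{u}\cdot\mathbf{n}$, the divergence theorem gives $\int_E\nabla\cdot\mathbf{u}_I=\int_E\nabla\cdot\mathbf{u}=0$, and since $\nabla\cdot\mathbf{u}_I\in P_0(E)$ this forces $\nabla\cdot\mathbf{u}_I=0$. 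Setting $\mathbf{e}_h:=\mathbf{u}_h-\mathbf{u}_I\in\mathbf{Z}_h$ and using $\interleave\mathbf{u}-\mathbf{u}_h\interleave\le\interleave\mathbf{u}-\mathbf{u}_I\interleave+\interleave\mathbf{e}_h\interleave$, the first summand is the interpolation error, whose energy norm splits into $|\mathbf{u}-\mathbf{u}_I|_1$ and $\|\kappa^{-1/2}(\mathbf{u}-\mathbf{u}_I)\|\le\|\kappa^{-1/2}\|_\infty\|\mathbf{u}-\mathbf{u}_I\|$, both $O(h)$ by Lemma~\ref{lemma calssical interpolation}; this produces the $C_{I_1}+C_{I_1}\|\kappa^{-1/2}\|_\infty$ contribution to $\widehat{C}$.

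The core is the bound on $\interleave\mathbf{e}_h\interleave$. By the coercivity \eqref{coercivity ah} and the kernel scheme \eqref{discrete kernel scheme}, I would write $\nu C_*\interleave\mathbf{e}_h\interleave^2\le\nu a_h(\mathbf{e}_h,\mathbf{e}_h)=(\mathbb{P}(\mathbf{f}),\mathcal{R}_h\mathbf{e}_h)-\nu a_h(\mathbf{u}_I,\mathbf{e}_h)$. The decisive structural fact is that $\mathcal{R}_h\mathbf{e}_h-\mathbf{e}_h$ is solenoidal with vanishing normal trace on $\partial\Omega$: $\nabla\cdot\mathbf{e}_h\in P_0(E)$ combined with \eqref{reconstruction 2} gives $\nabla\cdot\mathcal{R}_h\mathbf{e}_h=\nabla\cdot\mathbf{e}_h=0$ elementwise, its normal component is continuous across interior edges (both fields lie in $\mathbf{H}(\dive,\Omega)$) and vanishes on $\partial\Omega$ by \eqref{Rh 1}. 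Hence $\mathcal{R}_h\mathbf{e}_h-\mathbf{e}_h\in\mathbf{H}_0(\dive,\Omega)$ is $L^2$-orthogonal to every gradient. Substituting the strong form $\mathbf{f}=-\nu\Delta\mathbf{u}+\nu\kappa^{-1}\mathbf{u}-\nabla p$ and discarding both $\nabla p$ and the Helmholtz gradient $\nabla\alpha$ from \eqref{Helmholtz}, I get $(\mathbb{P}(\mathbf{f}),\mathcal{R}_h\mathbf{e}_h-\mathbf{e}_h)=\nu(-\Delta\mathbf{u}+\kappa^{-1}\mathbf{u},\mathcal{R}_h\mathbf{e}_h-\mathbf{e}_h)$, so the viscosity factors out and cancels the $\nu$ on the left. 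This single step is where $\nu$-independence, hence real pressure robustness, is born.

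Since $\mathbf{e}_h\in\mathbf{Z}_h\subset\mathbf{Z}$ we have $(\mathbb{P}(\mathbf{f}),\mathbf{e}_h)=\nu a(\mathbf{u},\mathbf{e}_h)$; inserting $(\mathbb{P}(\mathbf{f}),\mathcal{R}_h\mathbf{e}_h)=(\mathbb{P}(\mathbf{f}),\mathcal{R}_h\mathbf{e}_h-\mathbf{e}_h)+(\mathbb{P}(\mathbf{f}),\mathbf{e}_h)$ and dividing by $\nu$ leaves $C_*\interleave\mathbf{e}_h\interleave^2\le(-\Delta\mathbf{u}+\kappa^{-1}\mathbf{u},\mathcal{R}_h\mathbf{e}_h-\mathbf{e}_h)+\big[a(\mathbf{u},\mathbf{e}_h)-a_h(\mathbf{u}_I,\mathbf{e}_h)\big]$. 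The first term is controlled by $\|{-}\Delta\mathbf{u}+\kappa^{-1}\mathbf{u}\|\,\|\mathcal{R}_h\mathbf{e}_h-\mathbf{e}_h\|\le C_{\mathcal{R}_1}h\big(\|\mathbf{u}\|_2+\|\kappa^{1/2}\mathbf{u}\|\big)\interleave\mathbf{e}_h\interleave$ via \eqref{reconstruction 3} and $|\mathbf{e}_h|_1\le\interleave\mathbf{e}_h\interleave$, giving the $C_{\mathcal{R}_1}/C_*$ part of $\widehat{C}$. The bracketed VEM consistency error is handled in the standard manner: with a piecewise polynomial $\mathbf{u}_\pi$ from Lemma~\ref{lemma calssical polynomial estimate}, the polynomial consistency \eqref{consistency ah} turns it into $a(\mathbf{u}-\mathbf{u}_\pi,\mathbf{e}_h)+a_h(\mathbf{u}_\pi-\mathbf{u}_I,\mathbf{e}_h)$, which by the continuity bounds \eqref{a continuity} and \eqref{continuity ah} together with Lemmas~\ref{lemma calssical interpolation}--\ref{lemma calssical polynomial estimate} is $O(h)\interleave\mathbf{e}_h\interleave$. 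Cancelling one power of $\interleave\mathbf{e}_h\interleave$ and dividing by $C_*$ gives $\interleave\mathbf{e}_h\interleave\le Ch$, and the triangle inequality yields \eqref{priori estimate u}.

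For the pressure bound \eqref{priori estimate p} I would take $p_I\in Q_h$ from Lemma~\ref{lemma calssical interpolation} and apply the discrete inf-sup condition \eqref{discrete inf-sup} to $p_h-p_I$, estimating the numerator $b(\mathbf{v}_h,p_h-p_I)$ for arbitrary $\mathbf{v}_h\in\mathbf{V}_h$. Subtracting \eqref{discrete scheme 1} from \eqref{continue weak formulation 1} gives $b(\mathbf{v}_h,p_h-p)=(\mathbf{f},\mathcal{R}_h\mathbf{v}_h-\mathbf{v}_h)-\nu\big[a_h(\mathbf{u}_h,\mathbf{v}_h)-a(\mathbf{u},\mathbf{v}_h)\big]$. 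Since $\nabla\cdot\mathbf{v}_h\in P_0(E)$, the very same orthogonality argument shows $\mathcal{R}_h\mathbf{v}_h-\mathbf{v}_h\in\mathbf{H}_0(\dive,\Omega)$ is solenoidal, whence $(\mathbf{f},\mathcal{R}_h\mathbf{v}_h-\mathbf{v}_h)=\nu(-\Delta\mathbf{u}+\kappa^{-1}\mathbf{u},\mathcal{R}_h\mathbf{v}_h-\mathbf{v}_h)=O(\nu h)\interleave\mathbf{v}_h\interleave$; the term $\nu[a_h(\mathbf{u}_h,\mathbf{v}_h)-a(\mathbf{u},\mathbf{v}_h)]$ is likewise $O(\nu h)\interleave\mathbf{v}_h\interleave$ using \eqref{priori estimate u}, \eqref{continuity ah} and \eqref{consistency ah}, and together these account for the $\nu C_s/\widetilde\gamma$ term. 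Adding $b(\mathbf{v}_h,p-p_I)\le\sqrt2\,C_{I_2}h\|p\|_1\interleave\mathbf{v}_h\interleave$, dividing by $\widetilde\gamma$, and combining with $\|p-p_I\|\le C_{I_2}h\|p\|_1$ gives \eqref{priori estimate p}; the surviving $\nu$ confirms that, as expected, only the velocity estimate is genuinely pressure-robust. The main obstacle throughout is rigorously certifying that $\mathcal{R}_h\mathbf{v}_h-\mathbf{v}_h$ belongs to $\mathbf{H}_0(\dive,\Omega)$ with zero divergence, since only then do the gradients $\nabla p$ and $\nabla\alpha$ drop out and the factor $\nu$ factors out cleanly; this hinges on the elementwise constancy of $\nabla\cdot\mathbf{v}_h$ and the flux constraints \eqref{Rh 1}--\eqref{Rh 2} in the definition of $\mathcal{R}_h$.
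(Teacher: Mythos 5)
Your proposal is correct and follows essentially the same route as the paper: the same splitting into interpolation error plus the kernel error $\mathbf{e}_h=\mathbf{u}_h-\mathbf{u}_I\in\mathbf{Z}_h$, the same two-term decomposition (a reconstruction term in which the gradient part of $\mathbf{f}$ is annihilated because $\mathcal{R}_h\mathbf{e}_h-\mathbf{e}_h$ is divergence-free with vanishing normal trace, plus the standard VEM polynomial-consistency term via $\mathbf{u}_\pi$), coercivity with the choice $\mathbf{v}_h=\mathbf{u}_h-\mathbf{u}_I$, and the discrete inf-sup condition for the pressure — your explicit routing through $\mathbb{P}(\mathbf{f})$ and $\mathbf{H}_0(\dive,\Omega)$-orthogonality is simply a more careful phrasing of the paper's direct substitution of the strong form (and your verification that $\mathbf{u}_I\in\mathbf{Z}_h$ spells out a detail the paper leaves implicit). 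One minor remark: in the pressure step you bound $b(\mathbf{v}_h,p-p_I)$ by $O(h)$, whereas it is exactly zero, since $\nabla\cdot\mathbf{v}_h$ is piecewise constant and $p_I$ is the $L^2$ projection onto piecewise constants; this is how the paper obtains the slightly sharper stated constant $\widetilde{C}$, though your version still yields the claimed $O(h)$ rate.
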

\begin{proof}
    Let $\mathbf{u}_I \in \mathbf{V}_h$ be the interpolation of $\mathbf{u}$ and satisfy the estimate \eqref{lemma calssical interpolation}. For any $\mathbf{v}_h\in\mathbf{Z}_h$, it follows that
    \begin{equation}	\label{prior error eq uh}
        \begin{aligned}
            \nu a_h (\mathbf{u}_h - \mathbf{u}_I, \mathbf{v}_h) = &\;\nu a_h (\mathbf{u}_h, \mathbf{v}_h) - \nu a_h (\mathbf{u}_I, \mathbf{v}_h) \\
            =&\; \left(\mathbf{f}, \mathcal{R}_h\mathbf{v}_h\right)- \nu a_h (\mathbf{u}_I, \mathbf{v}_h)\\
            = &\;  \left[\left(\mathbf{f}, \mathcal{R}_h\mathbf{v}_h\right)- \nu a(\mathbf{u},\mathbf{v}_h)\right] + \left[\nu a(\mathbf{u},\mathbf{v}_h) -\nu a_h (\mathbf{u}_I, \mathbf{v}_h)\right]\\
            = &\; \mathcal{T}_1 + \mathcal{T}_2.
        \end{aligned}
    \end{equation}
    Then we derive the upper bounds for $\mathcal{T}_1$ and $\mathcal{T}_2$ item by item. For $\mathcal{T}_1$, it follows from \eqref{Brinkman eq 1} and \eqref{reconstruction 3} that
    \begin{equation}	\label{T1 eq}
        \begin{aligned}
            \mathcal{T}_1 =&\; \left( -\nu\Delta \mathbf{u} + \nu \kappa^{-1} \mathbf{u} + \nabla p, \mathcal{R}_h\mathbf{v}_h\right) - \nu a(\mathbf{u},\mathbf{v}_h)\\
             =&\; \nu a(\mathbf{u}, \mathcal{R}_h\mathbf{v}_h - \mathbf{v}_h)\\
             = &\; \nu(-\Delta \mathbf{u}, \mathcal{R}_h\mathbf{v}_h - \mathbf{v}_h) + \nu \left(\kappa^{-1}\mathbf{u}, \mathcal{R}_h\mathbf{v}_h - \mathbf{v}_h \right) \\
             \le &\; C_{\mathcal{R}_1} \nu h \left(\|\mathbf{u}\|_2 + \|\kappa^{1/2}\mathbf{u}\|\right) |\mathbf{v}_h|_1\\
             \le &\; C_{\mathcal{R}_1} \nu h \left(\|\mathbf{u}\|_2 + \|\kappa^{1/2}\mathbf{u}\|\right) \interleave\mathbf{v}_h\interleave.
        \end{aligned}
    \end{equation}
   For the second term, let $\mathbf{u}_\pi$ be the piecewise linear polynomial projection of $\mathbf{u}$. Based on the consistency property \eqref{consistency ah}, \eqref{a continuity}, \eqref{continuity ah}, combining Lemma \ref{lemma calssical interpolation} and Lemma \ref{lemma calssical polynomial estimate}, we have
   \begin{equation}		\label{T2 eq}
       \begin{aligned}
           \mathcal{T}_2 =& \;\nu a(\mathbf{u}-\mathbf{u}_\pi,\mathbf{v}_h)  + \nu a(\mathbf{u}_\pi,\mathbf{v}_h) -\nu a_h (\mathbf{u}_I, \mathbf{v}_h)\\
           =& \;  \nu a(\mathbf{u} - \mathbf{u}_\pi,\mathbf{v}_h) +\nu a_h (\mathbf{u}_\pi- \mathbf{u}_I, \mathbf{v}_h)\\
           \le &\; \nu \left(C_a \interleave\mathbf{u}-\mathbf{u}_\pi\interleave + C^*\interleave\mathbf{u}_\pi - \mathbf{u}_I \interleave\right)\interleave\mathbf{v}_h\interleave\\
           \le &\; \left(C_\pi C_a + C_\pi C^* + C_{I_1}C^*\right) \nu h \|\mathbf{u}\|_2 \left(1 + h\|\kappa^{-1/2}\|_\infty\right) \interleave\mathbf{v}_h\interleave\\
           \le &\; C_{\mathcal{T}_2} \nu h\|\mathbf{u}\|_2 \interleave\mathbf{v}_h\interleave,
       \end{aligned}
   \end{equation}
where we have used the boundedness of the mesh size at the last step, and the constant is defined as $C_{\mathcal{T}_2}:=\left(C_\pi C_a + C_\pi C^* + C_{I_1}C^*\right)\left(1 + \|\kappa^{-1/2}\|_\infty\right)$. Collecting the above \eqref{prior error eq uh}-\eqref{T2 eq}, taking $\mathbf{v}_h:=\mathbf{u}_h - \mathbf{u}_I$ and recalling the coercivity \eqref{coercivity ah}, ones can lead to the following bound:
$$
C_* \nu \interleave\mathbf{u}_h - \mathbf{u}_I\interleave \le C_{\mathcal{R}_1} \nu h \left(\|\mathbf{u}\|_2 + \|\kappa^{1/2}\mathbf{u}\|\right) + C_{\mathcal{T}_2} \nu h\|\mathbf{u}\|_2.
$$
Eliminate the symbol $\nu$ from both sides, which yields the error estimate    
\begin{equation}	\label{uh -uI}
      \interleave\mathbf{u}_h - \mathbf{u}_I\interleave \le \left[\frac{C_{\mathcal{R}_1}}{C_*} \left(\|\mathbf{u}\|_2 + \|\kappa^{1/2}\mathbf{u}\|\right) +  \frac{C_{\mathcal{T}_2}}{C_*} \|\mathbf{u}\|_2 \right]  h.
\end{equation}
The estimate \eqref{uh -uI} and the application of the triangle inequality conclude 
\begin{equation}
    \begin{aligned}
        \interleave \mathbf{u} - \mathbf{u}_h\interleave \le &\;\interleave \mathbf{u} - \mathbf{u}_I\interleave + \interleave \mathbf{u}_h - \mathbf{u}_I\interleave\\
        \le &\; \left[\frac{C_{\mathcal{R}_1}}{C_*} \left(\|\mathbf{u}\|_2 + \|\kappa^{1/2}\mathbf{u}\|\right) +  \left(\frac{C_{\mathcal{T}_2}}{C_*} +  C_{I_1} + C_{I_1}\|\kappa^{-1/2}\|_\infty\right) \|\mathbf{u}\|_2 \right]  h \\
       := &\; \widehat{C} \, h.
    \end{aligned}
\end{equation}

Next we should bound the term $\|p - p_h\|$. Let $p_I$ be the piecewise constant projection of $p$ with respect to $\Omega_h$. Using \eqref{Brinkman eq 1} and \eqref{discrete scheme 1}, we arrive at 
\begin{equation}	\label{p error eq}
    \begin{aligned}
        \left(p_I - p_h, \nabla\cdot\mathbf{v}_h\right) =&\; (p_I, \nabla\cdot\mathbf{v}_h) + (\mathbf{f},\mathcal{R}_h\mathbf{v}_h) - \nu a_h(\mathbf{u}_h,\mathbf{v}_h)\\
        =&\;(p_I, \nabla\cdot\mathbf{v}_h) + ( -\nu \mathbf{u} + \nu \kappa^{-1} \mathbf{u} + \nabla p,\mathcal{R}_h\mathbf{v}_h) - \nu a_h(\mathbf{u}_h,\mathbf{v}_h) \\
        =&\;   (p_I, \nabla\cdot\mathbf{v}_h) - (p,\nabla\cdot\mathcal{R}_h\mathbf{v}_h) + \nu a(\mathbf{u}, \mathcal{R}_h\mathbf{v}_h)- \nu a_h(\mathbf{u}_h,\mathbf{v}_h)\\
        :=&\; \mathcal{S}_1 + \mathcal{S}_2.
    \end{aligned}
\end{equation}
For the first term $\mathcal{S}_1$, we observe that $p_I$ represents  a piecewise constant in local element $E$, as indicated by  \eqref{reconstruction 2} and the orthogonality of $L^2$-projection, which leads to
\begin{equation}
    \mathcal{S}_1:=(p_I, \nabla\cdot\mathbf{v}_h) - (p,\nabla\cdot\mathcal{R}_h\mathbf{v}_h) = (p_I - p,\nabla\cdot\mathcal{R}_h\mathbf{v}_h) = 0.
\end{equation}
For the second term $\mathcal{S}_2$, repeating the similar analysis as the velocity error estimate, we deduce that
\begin{equation}	\label{s2 estimate}
    \begin{aligned}
        \mathcal{S}_2: = &\; \nu a(\mathbf{u}, \mathcal{R}_h\mathbf{v}_h)- \nu a_h(\mathbf{u}_h,\mathbf{v}_h)\\
        =&\;\nu a(\mathbf{u}, \mathcal{R}_h\mathbf{v}_h - \mathbf{v}_h) + \nu a(\mathbf{u},\mathbf{v}_h)
        - \nu a_h(\mathbf{u},\mathbf{v}_h) +  \nu a_h(\mathbf{u} -\mathbf{u}_h,\mathbf{v}_h) \\
        =&\;\nu a(\mathbf{u}, \mathcal{R}_h\mathbf{v}_h - \mathbf{v}_h) + \nu a(\mathbf{u}-\mathbf{u}_\pi,\mathbf{v}_h) - \nu a_h(\mathbf{u}-\mathbf{u}_\pi,\mathbf{v}_h)  +  \nu a_h(\mathbf{u} -\mathbf{u}_h,\mathbf{v}_h)\\
        \le &\; \nu\left(C_a \sqrt{C_{\mathcal{R}_1}^2 + C_{\mathcal{R}_2}^2} \interleave\mathbf{u}\interleave + (C_a + C^*) C_\pi \|\mathbf{u}\|_2(1+\|\kappa^{-1/2}\|_\infty)+ \widehat{C}\,\right) h\interleave\mathbf{v}_h\interleave\\
        :=&\; \nu C_{s}h\interleave\mathbf{v}_h\interleave.
    \end{aligned}
\end{equation}
Then collecting the above \eqref{p error eq}-\eqref{s2 estimate}, and by using the discrete inf-sup condition \eqref{discrete inf-sup}, it lead to
\begin{equation}	\label{eq pI- ph}
    \|p_I - p_h\| \le\frac{1}{\widetilde{\gamma}}  \sup_{\mathbf{v}_h\in \mathbf{V}_h/ {\{\mathbf{0}\}}} \frac{d(\mathbf{v}_h, p_I - p_h)}{\interleave \mathbf{v}_h\interleave} \le \nu\frac{C_{s}}{\widetilde{\gamma}} h.
\end{equation}
Finally, using triangle inequality, \eqref{eq pI- ph} and  Lemma \ref{lemma calssical interpolation}, we arrive at
\begin{equation}
     \|p - p_h\| \le \|p- p_I\| + \|p_I - p_h\|\le\left( C_{I_2} \|p\|_1 + \nu\frac{ C_{s}}{\widetilde{\gamma}}\right) h:= \widetilde{C} h.
\end{equation}
The proof is completed.
\end{proof}

\begin{remark}	\label{remark robust}
    The really pressure-robust virtual element is Locking-free for $\nu\rightarrow 0$, and it eliminates the pressure-dependence of the velocity approximation. From the above theorem, we can observe that the constant $\widehat{C}$ is independent of pressure $p$ and even of viscosity $\nu$ in \eqref{priori estimate u}. The robustness will be demonstrated through subsequently numerical examples.
\end{remark}

From the above Theorem \ref{thm prior u1 and p} and the definition of norm $\interleave\cdot\interleave$, we have the estimate of velocity in $[H^1(\Omega)]^2$ semi-norm.
\begin{coro}
     Under the same Assumption of Theorem \ref{thm prior u1 and p}, the priori estimate of velocity  in $[H^1(\Omega)]^2$ semi-norm as follow
     $$
     |\mathbf{u} - \mathbf{u}_h|_1 \le \widehat{C} h,
     $$
     where the positive constant $\widehat{C}$ is the same as in  Theorem \ref{thm prior u1 and p}.
\end{coro}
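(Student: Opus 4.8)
The plan is to derive the Corollary as an immediate consequence of the energy-norm estimate \eqref{priori estimate u} already established in Theorem \ref{thm prior u1 and p}, exploiting the fact that the $[H^1(\Omega)]^2$ semi-norm is controlled by the full energy norm $\interleave\cdot\interleave$. The key observation is purely definitional: since $|\mathbf{v}|_1 = \|\nabla\mathbf{v}\|$ and the energy norm in \eqref{energy norm} is built as $\interleave\mathbf{v}\interleave = \left(\|\nabla\mathbf{v}\|^2 + \|\kappa^{-1/2}\mathbf{v}\|^2\right)^{1/2}$, the gradient contribution is always one of two nonnegative summands inside the energy norm.

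First I would write $|\mathbf{u} - \mathbf{u}_h|_1 = \|\nabla(\mathbf{u}-\mathbf{u}_h)\|$. Then, using that $\kappa$ is symmetric positive definite so that $\|\kappa^{-1/2}(\mathbf{u}-\mathbf{u}_h)\|^2 \ge 0$, I would bound
\begin{equation*}
    |\mathbf{u}-\mathbf{u}_h|_1^2 = \|\nabla(\mathbf{u}-\mathbf{u}_h)\|^2 \le \|\nabla(\mathbf{u}-\mathbf{u}_h)\|^2 + \|\kappa^{-1/2}(\mathbf{u}-\mathbf{u}_h)\|^2 = \interleave\mathbf{u}-\mathbf{u}_h\interleave^2.
\end{equation*}
Taking square roots gives $|\mathbf{u}-\mathbf{u}_h|_1 \le \interleave\mathbf{u}-\mathbf{u}_h\interleave$, and then invoking \eqref{priori estimate u} directly yields $|\mathbf{u}-\mathbf{u}_h|_1 \le \widehat{C}h$ with the very same constant $\widehat{C}$.

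There is essentially no obstacle here: the entire argument is a one-line domination of the seminorm by the energy norm followed by citation of the theorem. The only point requiring (trivial) care is to note explicitly that the positive-definiteness of $\kappa$ makes the discarded term $\|\kappa^{-1/2}(\mathbf{u}-\mathbf{u}_h)\|^2$ nonnegative, which is exactly the same structural fact used earlier in deriving \eqref{continuous bound of velocity}. No new estimates, interpolation results, or reconstruction-operator properties are needed beyond what Theorem \ref{thm prior u1 and p} already supplies.
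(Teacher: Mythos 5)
Your proposal is correct and follows exactly the paper's own reasoning: the paper derives this corollary directly from Theorem \ref{thm prior u1 and p} and the definition of the energy norm, which is precisely your observation that $|\mathbf{u}-\mathbf{u}_h|_1 \le \interleave \mathbf{u}-\mathbf{u}_h \interleave$ since the discarded term $\|\kappa^{-1/2}(\mathbf{u}-\mathbf{u}_h)\|^2$ is nonnegative. Nothing further is needed.
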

\section{A posteriori error analysis and mesh adaptivity}
    In the section, we present a residual-type posteriori error estimator, and prove the estimator yields globally upper and locally lower bounds for the discretization error. Finally, we proposed a strategy of mesh refinement and an adaptive algorithm for VEM approximate to Brinkman problem.
    
   Firstly, we deduce the residual  error equation. Let $(\mathbf{u}, p)\in\mathbf{V}\times Q$ and $(\mathbf{u}_h, p_h)\in\mathbf{V}_h\times Q_h$ be the solutions of \eqref{continue weak formulation} and \eqref{discrete scheme}, respectively. we denote the error as
    $$
    e_{\mathbf{u}}:= \mathbf{u} - \mathbf{u}_h,\qquad e_p:= p - p_h.
    $$
    Then we can obtain the  error equation :
    \begin{equation}	\label{post error eq ab}
        \begin{aligned}
            &\nu a(e_{\mathbf{u}}, \mathbf{v}) + b(\mathbf{v},e_p) = (\mathbf{f},\mathbf{v}) - \nu a(\mathbf{u}_h,\mathbf{v}) - b(\mathbf{v},p_h)		\\
            & = (\mathbf{f},\mathbf{v}) - (\mathbf{f},\mathcal{R}_h\mathbf{v}_h ) - \nu a(\mathbf{u}_h,\mathbf{v}) - b(\mathbf{v},p_h) + \nu a_h(\mathbf{u}_h,\mathbf{v}_h) + b(\mathbf{v}_h, p_h) \\
            & = (\mathbf{f},\mathbf{v}_h - \mathcal{R}_h\mathbf{v}_h)  + \nu a_h(\mathbf{u}_h,\mathbf{v}_h) - \nu a(\mathbf{u}_h,\mathbf{v}_h)  - \nu a(\mathbf{u}_h, \mathbf{v} - \mathbf{v}_h) - b(\mathbf{v} - \mathbf{v}_h, p_h) + (\mathbf{f},\mathbf{v} - \mathbf{v}_h),
        \end{aligned}      
    \end{equation}
    where for the last three terms, it follows from the integration by parts that
    \begin{equation}
        \begin{aligned}
                &- \nu a(\mathbf{u}_h, \mathbf{v} - \mathbf{v}_h) - b(\mathbf{v} - \mathbf{v}_h, p_h) + (\mathbf{f},\mathbf{v} - \mathbf{v}_h)\\
                 =& -\nu \left(\nabla\Pi^{\nabla,h}\mathbf{u}_h, \nabla(\mathbf{v}-\mathbf{v}_h)\right) -  \nu \left(\nabla(\mathbf{u}_h - \Pi^{\nabla,h}\mathbf{u}_h), \nabla(\mathbf{v}-\mathbf{v}_h)\right) - \nu\left(\kappa^{-1}\Pi^{0,h}\mathbf{u}_h, \mathbf{v} - \mathbf{v}_h\right) \\
                & -\nu\left(\kappa^{-1}(\mathbf{u}_h - \Pi^{0,h}\mathbf{u}_h), \mathbf{v} - \mathbf{v}_h\right) + \left(p_h,\nabla\cdot(\mathbf{v} - \mathbf{v}_h)\right) + (\mathbf{f},\mathbf{v} - \mathbf{v}_h) \\
                = & -  \nu \left(\nabla(\mathbf{u}_h - \Pi^{\nabla,h}\mathbf{u}_h), \nabla(\mathbf{v}-\mathbf{v}_h)\right)  -\nu\left(\kappa^{-1}(\mathbf{u}_h - \Pi^{0,h}\mathbf{u}_h), \mathbf{v} - \mathbf{v}_h\right) \\
                & + ( - \nu \kappa^{-1}\Pi^{0,h}\mathbf{u}_h + \mathbf{f}, \mathbf{v} - \mathbf{v}_h) - 
                \sum_{E \in \Omega_h}\sum_{e\in\mathcal{F}^0_h\cap\partial E}\left([\nu\nabla\Pi^{\nabla,E}\mathbf{u}_h - p_h \mathbf{I}]\mathbf{n}_e, \mathbf{v} - \mathbf{v}_h\right),
        \end{aligned}
    \end{equation}
where $\Delta \Pi^{\nabla,E}\mathbf{u}_h = \mathbf{0}$ and $\nabla p_h = \mathbf{0}$, due to $\Pi^{\nabla,E}\mathbf{u}_h \in \mathbf{P}_1(E)$ and $p_h$  belonging to the set of piecewise $P_0(E)$ (denoted as $Q_h$). Furthermore, we can derive the final form of error equation \eqref{post error eq ab} as follows:
    \begin{equation}	\label{posteriori error eq}
        \begin{aligned}
            &\nu a(e_{\mathbf{u}}, \mathbf{v}) + b(\mathbf{v},e_p)\\
             =\; &(\mathbf{f},\mathbf{v}_h - \mathcal{R}_h\mathbf{v}_h)  + \nu a_h(\mathbf{u}_h,\mathbf{v}_h) - \nu a(\mathbf{u}_h,\mathbf{v}_h) \\
            &-  \nu \sum_{E \in \Omega_h}\left(\nabla(\mathbf{u}_h - \Pi^{\nabla,E}\mathbf{u}_h), \nabla(\mathbf{v}-\mathbf{v}_h)\right)_E -\nu\sum_{E \in \Omega_h}\left(\kappa^{-1}(\mathbf{u}_h - \Pi^{0,E}\mathbf{u}_h), \mathbf{v} - \mathbf{v}_h\right)_E \\
            & + \sum_{E \in \Omega_h}( - \nu \kappa^{-1}\Pi^{0,E}\mathbf{u}_h + \mathbf{f}, \mathbf{v} - \mathbf{v}_h)_E - \sum_{E \in \Omega_h}\sum_{e\in\mathcal{F}^0_h\cap\partial E}\left([\nu\nabla\Pi^{\nabla,E}\mathbf{u}_h - p_h \mathbf{I}]\,\mathbf{n}_e, \mathbf{v} - \mathbf{v}_h\right)_e\\
            :=& \sum_{i=1}^{5} \mathcal{J}_i.
        \end{aligned}
    \end{equation}
    
Inspired by \cite{Wang2020post, Wang2021post}, we construct the local  estimator $\eta_E$ based on  \eqref{posteriori error eq} as follows
\begin{equation}
    \eta_E^2 : = \eta_{\mathbf{f},E}^2 + \eta_{S,E}^2 + \eta_{r,E}^2,
\end{equation}
where
\begin{subequations}
    \begin{align}
        &\eta_{\mathbf{f},E}^2:= h^2_E\,\|\mathbf{f}\|_E^2,	\\
        &\eta_{S,E}^2 := \nu^2S^\nabla_E((I- \Pi^{\nabla,E})\mathbf{u}_h, (I- \Pi^{\nabla,E})\mathbf{u}_h) + \nu^2S^0_E((I- \Pi^{0,E})\mathbf{u}_h, (I- \Pi^{0,E})\mathbf{u}_h),	\\
        &  \eta_{r,E}^2:= \nu^2 h_E^2\,\| \kappa^{-1}\Pi^{0,E}\mathbf{u}_h\|^2_E + \frac{1}{2} \sum_{e\in\mathcal{F}^0_h\cap\partial E} h_e\left\|[\nu\nabla\Pi^{\nabla,E}\mathbf{u}_h - p_h \mathbf{I}]\,\mathbf{n}_e \right\|_e^2.
    \end{align}
\end{subequations}
The global error estimator $\eta$ is defined as 
$$
\eta^2 = \sum_{E \in \Omega_h} \eta_E^2.
$$
Before analyzing the posteriori error, we present an local interpolation result \cite{Andrea2017post, Veiga2017Stokes}.
\begin{lema}	\label{lemma interpolation H1}
    Under  Assumption \ref{assumption regularity}, for $\mathbf{v}\in [H^1(\Omega)]^2$, there exists a interpolation function $\mathbf{v}_I\in \mathbf{V}_h$, such that for all elements $E\in\Omega_h$, it holds that 
    \begin{equation}	\label{interpolation H1}
        \|\mathbf{v} - \mathbf{v}_I\|_E + h_E |\mathbf{v} - \mathbf{v}_I|_{1,E} \le \widetilde{C}_{I}\,h_E |\mathbf{v}|_{1,\widetilde{E}},
    \end{equation}
  where  $\widetilde{E}$ denotes the union of the polygons in $\Omega_h$ intersecting $E$, and the positive constant $\widetilde{C}_{I}$ depend only on the mesh regularity.
\end{lema}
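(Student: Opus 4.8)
The plan is to build $\mathbf{v}_I$ as the composition of a Clément-type quasi-interpolation onto continuous piecewise linears with the virtual-element nodal interpolation. This detour is forced by the vertex degrees of freedom \eqref{dof1}, which require point values and hence are not well defined for a generic field in $[H^1(\Omega)]^2$ in two dimensions; the edge moments \eqref{dof2} are harmless since traces of $H^1$ functions lie in $L^2(e)$.

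First I would exploit the conforming simplicial submesh $\Omega_h^\star$ already introduced in Section \ref{section space}, and take on it the standard vector Clément (or Scott--Zhang) operator $\mathcal{C}_h$ mapping $[H^1(\Omega)]^2$ into the continuous piecewise-linear vector fields over $\Omega_h^\star$. Its classical local estimate reads, for each triangle $T\subset\Omega_h^\star$,
\[
    \|\mathbf{v}-\mathcal{C}_h\mathbf{v}\|_T+h_T\,|\mathbf{v}-\mathcal{C}_h\mathbf{v}|_{1,T}\le C\,h_T\,|\mathbf{v}|_{1,\omega_T},
\]
with $\omega_T$ the union of triangles sharing a vertex with $T$. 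Summing over the uniformly bounded number of triangles composing a polygon $E$ and using the shape-regularity of Assumption \ref{assumption regularity} (so that $h_T\simeq h_E$ and each patch $\omega_T$ stays inside $\widetilde{E}$) yields the element-wise bound
\[
    \|\mathbf{v}-\mathcal{C}_h\mathbf{v}\|_E+h_E\,|\mathbf{v}-\mathcal{C}_h\mathbf{v}|_{1,E}\le C\,h_E\,|\mathbf{v}|_{1,\widetilde{E}}.
\]

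Since $\mathcal{C}_h\mathbf{v}$ is continuous, its vertex values and edge normal moments are well defined, so I can set $\mathbf{v}_I:=I_h(\mathcal{C}_h\mathbf{v})\in\mathbf{V}_h$, where $I_h$ (with local restriction $I_E$) is the interpolation fixed by the degrees of freedom \eqref{dof1}--\eqref{dof2}. The central task is then to control $\mathcal{C}_h\mathbf{v}-\mathbf{v}_I=(\mathrm{I}-I_E)(\mathcal{C}_h\mathbf{v})$ on each $E$. Writing $\mathbf{w}:=\mathcal{C}_h\mathbf{v}|_E$ and subtracting a best linear polynomial $\mathbf{p}\in\mathbf{P}_1(E)$, I would use the polynomial consistency $I_E\mathbf{p}=\mathbf{p}$ together with a scaling argument to the reference configuration to obtain the stability bounds of the form $\|I_E\mathbf{w}\|_E\le C(\|\mathbf{w}\|_E+h_E|\mathbf{w}|_{1,E})$; combined with the Poincaré inequality and Lemma \ref{lemma calssical polynomial estimate} this gives
\[
    \|\mathbf{w}-I_E\mathbf{w}\|_E+h_E\,|\mathbf{w}-I_E\mathbf{w}|_{1,E}\le C\,h_E\,|\mathbf{w}|_{1,E}.
\]
A final triangle inequality merges the two displays, and bounding $|\mathcal{C}_h\mathbf{v}|_{1,E}\le|\mathbf{v}|_{1,E}+|\mathbf{v}-\mathcal{C}_h\mathbf{v}|_{1,E}\le C|\mathbf{v}|_{1,\widetilde{E}}$ by the first step delivers \eqref{interpolation H1} with $\widetilde{C}_I$ depending only on the regularity constant $\rho$.

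The hard part is the middle step. Because virtual functions are not polynomials, establishing the $L^2$- and $H^1$-stability of $I_E$ and the equivalence between the vector of degrees of freedom and the $H^1(E)$ norm requires a scaling and trace argument that leans on the very definition of $\mathbf{V}_E$ as solutions of a local elliptic problem and on the star-shapedness hypothesis; this is precisely where the geometric assumptions of Assumption \ref{assumption regularity} are genuinely consumed. Everything else is routine once this norm equivalence is in place.
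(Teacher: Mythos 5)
Your proposal is correct and is essentially the argument behind the paper's own treatment: the paper does not prove this lemma at all, but cites \cite{Andrea2017post, Veiga2017Stokes}, where the quasi-interpolant is constructed exactly as you describe --- a Cl\'ement/Scott--Zhang operator on the simplicial submesh composed with the degree-of-freedom interpolation into $\mathbf{V}_h$, with the stability of the latter on piecewise polynomials obtained from the DOF/norm equivalence and the local elliptic-problem characterization of virtual functions under the star-shapedness assumption. Your identification of that stability step as the place where Assumption \ref{assumption regularity} is genuinely consumed matches the cited sources.
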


\subsection{Upper bound}
In this subsection, we will provide a global upper bound for the discretization error  $\nu\interleave \mathbf{u} - \mathbf{u}_h\interleave + \|p-p_h\|$, based on the error equation \eqref{posteriori error eq}.

\begin{thm}
        (Reliability). Let $(\mathbf{u}, p)\in\mathbf{V}\times Q$ and $(\mathbf{u}_h, p_h)\in\mathbf{V}_h\times Q_h$ be the solutions of \eqref{continue weak formulation} and \eqref{discrete scheme}, respectively. Then we can derive the following upper bound
        \begin{equation}
            \nu\interleave \mathbf{u} - \mathbf{u}_h\interleave + \|p-p_h\| \le C_{\eta}\,\eta,
        \end{equation}
        where the positive constant $C_{\eta}:= C_{\eta,u} + C_{\eta,p}$ is independent of $h$.
    \end{thm}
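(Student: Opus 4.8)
The plan is to split the bound into its velocity and pressure contributions, taking the residual error equation \eqref{posteriori error eq} as the starting point. For the velocity I would first note that $e_{\mathbf{u}} = \mathbf{u} - \mathbf{u}_h$ is \emph{exactly} divergence-free, because $\mathbf{u}\in\mathbf{Z}$ and $\mathbf{u}_h\in\mathbf{Z}_h$; consequently $b(e_{\mathbf{u}}, e_p) = 0$. Testing \eqref{posteriori error eq} with $\mathbf{v} = e_{\mathbf{u}}$ and choosing $\mathbf{v}_h = (e_{\mathbf{u}})_I\in\mathbf{V}_h$ to be the quasi-interpolant of Lemma \ref{lemma interpolation H1}, the coercivity \eqref{a coercity} gives $\nu\interleave e_{\mathbf{u}}\interleave^2 \le \nu a(e_{\mathbf{u}}, e_{\mathbf{u}}) = \sum_{i=1}^5 \mathcal{J}_i$. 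The task then reduces to bounding each $\mathcal{J}_i$ by a piece of the estimator times $\interleave e_{\mathbf{u}}\interleave$, after which dividing by $\interleave e_{\mathbf{u}}\interleave$ yields $\nu\interleave e_{\mathbf{u}}\interleave \le C_{\eta,u}\,\eta$.

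For the term-by-term estimates I would argue as follows. For $\mathcal{J}_1 = (\mathbf{f}, \mathbf{v}_h - \mathcal{R}_h\mathbf{v}_h)$, an elementwise Cauchy--Schwarz inequality with the reconstruction bound \eqref{reconstruction 3} gives $|\mathcal{J}_1|\lesssim\big(\sum_E h_E^2\|\mathbf{f}\|_E^2\big)^{1/2}|\mathbf{v}_h|_1$, producing $\eta_{\mathbf{f}}$, while the stability of Lemma \ref{lemma interpolation H1} bounds $|\mathbf{v}_h|_1\lesssim\interleave e_{\mathbf{u}}\interleave$. For the inconsistency term $\mathcal{J}_2 = \nu\big(a_h(\mathbf{u}_h,\mathbf{v}_h) - a(\mathbf{u}_h,\mathbf{v}_h)\big)$ I would use the orthogonality of the projections, so that by the consistency \eqref{consistency ah} the polynomial parts cancel and the difference on each $E$ collapses onto the stabilization terms acting on $(\mathbf{I}-\Pi^{\nabla,E})\mathbf{u}_h$ and $(\mathbf{I}-\Pi^{0,E})\mathbf{u}_h$; the equivalence \eqref{stabilzation eq} turns these into $\eta_{S,E}$. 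The term $\mathcal{J}_3$ is handled in the same spirit: since $\Pi^{\nabla}_E(\mathbf{I}-\Pi^{\nabla,E})\mathbf{u}_h = \mathbf{0}$ and $\Pi^{0}_E(\mathbf{I}-\Pi^{0,E})\mathbf{u}_h = \mathbf{0}$, the lower bounds in \eqref{stabilzation eq} control $\|\nabla(\mathbf{I}-\Pi^{\nabla,E})\mathbf{u}_h\|_E$ and $\|\kappa^{-1/2}(\mathbf{I}-\Pi^{0,E})\mathbf{u}_h\|_E$ by $\eta_{S,E}/\nu$, while Lemma \ref{lemma interpolation H1} bounds the factors $|e_{\mathbf{u}}-\mathbf{v}_h|_{1,E}$ and $\|e_{\mathbf{u}}-\mathbf{v}_h\|_E$ locally by $|e_{\mathbf{u}}|_{1,\widetilde E}$ and $h_E|e_{\mathbf{u}}|_{1,\widetilde E}$. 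The residual term $\mathcal{J}_4$ splits by the triangle inequality into $\nu h_E\|\kappa^{-1}\Pi^{0,E}\mathbf{u}_h\|_E$ and $h_E\|\mathbf{f}\|_E$ contributions matched against $\|e_{\mathbf{u}}-\mathbf{v}_h\|_E\lesssim h_E|e_{\mathbf{u}}|_{1,\widetilde E}$, recovering $\eta_{r,E}$ and $\eta_{\mathbf{f},E}$. Finally, the edge term $\mathcal{J}_5$ requires a scaled trace inequality to produce $\|e_{\mathbf{u}}-\mathbf{v}_h\|_e\lesssim h_e^{1/2}|e_{\mathbf{u}}|_{1,\widetilde E}$, which pairs with $\|[\nu\nabla\Pi^{\nabla,E}\mathbf{u}_h - p_h\mathbf{I}]\mathbf{n}_e\|_e$ to give the edge part of $\eta_{r,E}$. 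Summing over $E$ and invoking the finite-overlap property of the patches $\widetilde E$ collects everything into $C_{\eta,u}\,\eta\interleave e_{\mathbf{u}}\interleave$.

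For the pressure I would exploit the continuous inf-sup condition \eqref{continue LBB condition}: since $e_p = p - p_h\in Q$, it suffices to bound $b(\mathbf{v},e_p)$ for an arbitrary $\mathbf{v}\in\mathbf{V}$. Rearranging \eqref{posteriori error eq} gives $b(\mathbf{v},e_p) = \sum_{i=1}^5\mathcal{J}_i - \nu a(e_{\mathbf{u}},\mathbf{v})$ with $\mathbf{v}_h = \mathbf{v}_I$; the $\mathcal{J}_i$ are estimated exactly as above, now by $C\eta\interleave\mathbf{v}\interleave$, and the extra term is controlled by the continuity \eqref{a continuity} together with the already-proved velocity bound, $\nu a(e_{\mathbf{u}},\mathbf{v})\le C_a\,\nu\interleave e_{\mathbf{u}}\interleave\interleave\mathbf{v}\interleave\le C_a C_{\eta,u}\,\eta\interleave\mathbf{v}\interleave$. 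It then remains only to convert $\interleave\mathbf{v}\interleave$ into $\|\nabla\mathbf{v}\|$: because $\kappa^{-1}$ is bounded, the Friedrichs inequality \eqref{Friedrichs} yields $\interleave\mathbf{v}\interleave\lesssim\|\nabla\mathbf{v}\|$, so $b(\mathbf{v},e_p)\le C\eta\|\nabla\mathbf{v}\|$ and \eqref{continue LBB condition} gives $\|p-p_h\|\le C_{\eta,p}\,\eta$. Adding the two estimates finishes the proof with $C_\eta = C_{\eta,u}+C_{\eta,p}$.

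The main obstacle is the treatment of the inconsistency term $\mathcal{J}_2$ and the projection-defect terms in $\mathcal{J}_3$: the crux is to show that the mismatch between $a_h$ and $a$, and the non-polynomial remainders $(\mathbf{I}-\Pi^{\nabla,E})\mathbf{u}_h$ and $(\mathbf{I}-\Pi^{0,E})\mathbf{u}_h$, are genuinely captured by the computable indicator $\eta_{S,E}$ through the norm equivalence \eqref{stabilzation eq}. A secondary difficulty lies in the edge term $\mathcal{J}_5$, where the scaled trace inequality must be combined carefully with the local interpolation bound of Lemma \ref{lemma interpolation H1} to recover the correct $h_e^{1/2}$ weight, and the finite-overlap counting of the patches $\widetilde E$ must be used to pass from local to global sums while keeping all constants independent of $h$.
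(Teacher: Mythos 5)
Your proposal is correct and, for the velocity bound, coincides with the paper's proof: both test \eqref{posteriori error eq} with $\mathbf{v}=e_{\mathbf{u}}$ and $\mathbf{v}_h=e_{\mathbf{u},I}$, use $b(e_{\mathbf{u}},e_p)=0$ (exact divergence-freeness of $\mathbf{u}_h$), and bound $\mathcal{J}_1$--$\mathcal{J}_5$ via the reconstruction estimate, the stabilization equivalence \eqref{stabilzation eq}, Lemma \ref{lemma interpolation H1}, and a scaled trace inequality. The differences are in the pressure step and in bookkeeping, and in both cases your version is the tighter one. For the pressure, the paper derives the identity \eqref{pressure error eq final} (which is your rearrangement after integration by parts) but then cites the \emph{discrete} inf-sup condition \eqref{discrete inf-sup}; since $p-p_h\notin Q_h$ and the test functions in that identity range over $\mathbf{V}$, the tool that actually closes the argument is the continuous inf-sup \eqref{continue LBB condition}, which is exactly what you invoke, combined with $\interleave\mathbf{v}\interleave\lesssim\|\nabla\mathbf{v}\|$ from \eqref{Friedrichs} and boundedness of $\kappa^{-1}$ --- so your route is the strictly correct reading of the paper's intent. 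Second, you bound $\mathcal{J}_1$ and $\mathcal{J}_4$ elementwise, producing $\bigl(\sum_E h_E^2\|\mathbf{f}\|_E^2\bigr)^{1/2}=\eta_{\mathbf{f}}$ directly, whereas the paper uses the global factor $h\,\|\mathbf{f}\|$; on the strongly graded meshes that adaptivity generates, $h\,\|\mathbf{f}\|$ dominates $\eta_{\mathbf{f}}$ rather than the reverse, so the local treatment is what genuinely yields a bound by $\eta$. The only caveat is that your elementwise argument needs a local form $\|\mathcal{R}_h\mathbf{v}-\mathbf{v}\|_E\lesssim h_E|\mathbf{v}|_{1,E}$ of \eqref{reconstruction 3}; this holds because $\mathcal{R}_h$ is constructed element by element, but it is not stated explicitly in the paper and should be recorded if you write the proof out in full.
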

    \begin{proof}
        Let $e_{\mathbf{u},I}\in\mathbf{V}_h$  be the interpolation of $e_\mathbf{u}$ satisfying the  Lemma \eqref{lemma calssical interpolation}. Taking $\mathbf{v} = e_\mathbf{u}, \mathbf{v}_h = e_{\mathbf{u},I}$ in \eqref{posteriori error eq}, we now estimate $\mathcal{J}_i$ term by term. For the first term $\mathcal{J}_1$, by using  \eqref{reconstruction 3} and \eqref{interpolation H1}, it follows that
        \begin{equation}	\label{J1}
            \begin{aligned}
                \mathcal{J}_1 &\le C_{\mathcal{R}_1} h\,\|\mathbf{f}\| | e_{\mathbf{u},I} |_{1} \le  C_{\mathcal{R}_1} h\,\|\mathbf{f}\|  \left( |e_{\mathbf{u},I} -e_{\mathbf{u}}|_{1} + |e_{\mathbf{u}} |_{1} \right) \\
                &\le C_{\mathcal{R}_1} h\,\|\mathbf{f}\|  \left(\widetilde{C}_{I}C_{\widetilde{E}}|e_{\mathbf{u}}|_1 + |e_{\mathbf{u}}|_1\right)\\
                &\le C_{\mathcal{R}_1}\left(\widetilde{C}_{I}C_{\widetilde{E}} + 1\right) h\,\|\mathbf{f}\|  |e_{\mathbf{u}}|_1 \\
                & := C_{\mathcal{J}_1} h\,\|\mathbf{f}\|  \interleave e_{\mathbf{u}}\interleave,
            \end{aligned}
        \end{equation}
where the  $C_{\widetilde{E}}$ denotes a positive constant depending only on the mesh regularity.
       
For the second term $\mathcal{J}_2$, a use of Lemma \ref{lemma calssical polynomial estimate} and the properties \eqref{stabilzation eq} of the stabilizing terms $S^\nabla_E$ and $S^0_E$  yield to
\begin{equation}
    \begin{aligned}
        \mathcal{J}_2 := & \;\nu a_h(\mathbf{u}_h,e_{\mathbf{u},I}) - \nu a(\mathbf{u}_h,e_{\mathbf{u},I}) \\
        = & -\nu \left(\nabla(\mathbf{u}_h - \Pi^{\nabla, h}\mathbf{u}_h),\nabla(e_{\mathbf{u},I} - \Pi^{\nabla, h}e_{\mathbf{u},I})\right) - \nu \left(\kappa^{-1}(I-\Pi^{0,h})\mathbf{u}_h, (I- \Pi^{0,h})e_{\mathbf{u},I}\right)\\
        & + \nu\sum_{E \in \Omega_h} S^{\nabla}_E \left((I- \Pi^{\nabla,E})\mathbf{u}_h, (I- \Pi^{\nabla,E})e_{\mathbf{u},I}\right) + S^0_E\left((I- \Pi^{0,E})\mathbf{u}_h, (I- \Pi^{0,E})e_{\mathbf{u},I}\right) \\
        \le &\;\nu (1 + C^\nabla_2) \| \nabla(\mathbf{u}_h - \Pi^{\nabla,h}\mathbf{u}_h)\|\,\|\nabla(e_{\mathbf{u},I} - \Pi^{\nabla,h}e_{\mathbf{u},I})\| \\
        &\;+ \nu (1 + C^0_2)\|\kappa^{-1/2}(\mathbf{u}_h - \Pi^{0,h}\mathbf{u}_h)\|\,\|\kappa^{-1/2}(e_{\mathbf{u},I} - \Pi^{0,h}e_{\mathbf{u},I})\|\\
        \le &\; \nu \frac{(1+ C_2^{\nabla})C_\pi}{(C_1^{\nabla})^{1/2}} \sum_{E \in \Omega_h} \left(S^\nabla_E((I- \Pi^{\nabla,E})\mathbf{u}_h, (I- \Pi^{\nabla,E})\mathbf{u}_h)\right)^{1/2} |e_{\mathbf{u},I}|_1\\
        &\; + \nu \frac{(1+ C_2^{0})C_\pi}{(C_1^{0})^{1/2}} h\,\sum_{E \in \Omega_h} \left(S^0_E((I- \Pi^{0,E})\mathbf{u}_h, (I- \Pi^{0,E})\mathbf{u}_h)\right)^{1/2}\|\kappa^{-1/2}\|_{\infty}|e_{\mathbf{u},I}|_1 \\
        \le &\; \nu  C_{\mathcal{J}_2}\interleave e_{\mathbf{u}}\interleave \sum_{E \in \Omega_h} \left(S^\nabla_E((I- \Pi^{\nabla,E})\mathbf{u}_h, (I- \Pi^{\nabla,E})\mathbf{u}_h)\right)^{1/2} + \left(S^0_E((I- \Pi^{0,E})\mathbf{u}_h, (I- \Pi^{0,E})\mathbf{u}_h)\right)^{1/2},
    \end{aligned}
\end{equation}
where the positive constant $C_{\mathcal{J}_2}:= \left(\widetilde{C}_{I}C_{\widetilde{E}} + 1\right)\max\left\{\frac{(1+ C_2^{\nabla})C_\pi}{(C_1^{\nabla})^{1/2}}, \frac{(1+ C_2^{0})C_\pi}{(C_1^{0})^{1/2}}\|\kappa^{-1/2}\|_{\infty}\right\} $ is a positive constant independent of $h$,  and the  same trick as used in \eqref{J1} is employed in last step.

Similar to the proof of $\mathcal{J}_2$, one can derive the bound of $\mathcal{J}_3$ as follows
\begin{equation}
    \mathcal{J}_3\le \nu C_{\mathcal{J}_3} \interleave e_{\mathbf{u}}\interleave \sum_{E \in \Omega_h} \left(S^\nabla_E((I- \Pi^{\nabla,E})\mathbf{u}_h, (I- \Pi^{\nabla,E})\mathbf{u}_h)\right)^{1/2} + \left(S^0_E((I- \Pi^{0,E})\mathbf{u}_h, (I- \Pi^{0,E})\mathbf{u}_h)\right)^{1/2},
\end{equation}
 where the positive constant  $C_{\mathcal{J}_3}: = \widetilde{C}_{I}C_{\widetilde{E}} \max\left\{\frac{1}{(C^\nabla_1)^{1/2}}, \frac{\|\kappa^{-1/2}\|_{\infty}}{(C^0_1)^{1/2}}\right\}$ is a positive constant independent of $h$.

The estimate of $\mathcal{J}_4$ also can be easily carried out
\begin{equation}
    \begin{aligned}
        \mathcal{J}_4:=&\;\sum_{E \in \Omega_h}( - \nu \kappa^{-1}\Pi^{0,E}\mathbf{u}_h + \mathbf{f}, e_{\mathbf{u}} - e_{\mathbf{u},I})_E	\\
        \le &\; \| - \nu \kappa^{-1}\Pi^{0,h}\mathbf{u}_h + \mathbf{f}\|\;\|e_{\mathbf{u}} - e_{\mathbf{u},I}\|	\\
        \le&\; \widetilde{C}_{I}C_{\widetilde{E}} \,h\,\| - \nu \kappa^{-1}\Pi^{0,h}\mathbf{u}_h + \mathbf{f}\|\interleave e_\mathbf{u}\interleave\\
        := &\; C_{\mathcal{J}_4} h\,\left(\nu\| \kappa^{-1}\Pi^{0,h}\mathbf{u}_h \| + \| \mathbf{f}\|\right)\interleave e_\mathbf{u}\interleave.
    \end{aligned}
\end{equation} 
Before estimating $\mathcal{J}_5$, we use Young's inequality to give a variant of the trace inequality: $\exists\,C_{\text{tra}}$ that is independent of $h_E$ such that
$$
\|w\|_e \le \frac{\sqrt{2}C_{\text{tra}} }{2} \left(h_E^{-1} \|w\|^{2}_E + h_E |w|_{1,E}^2 \right)^{1/2}\qquad \text{for any edge } e \in \partial E.
$$
Finally, by using Cauchy-Schwartz inequality, trace inequality and Lemma \ref{lemma interpolation H1}, we arrive at the bound  of the last term $\mathcal{J}_5$ as follows
$$	
    \begin{aligned}
        \mathcal{J}_5\le&\;\sum_{E \in \Omega_h}\sum_{e\in\mathcal{F}^0_h\cap\partial E} \left\|[\nu\nabla\Pi^{\nabla,E}\mathbf{u}_h - p_h \mathbf{I}]\,\mathbf{n}_e \right\|_e\;\|e_{\mathbf{u}} - e_{\mathbf{u},I}\|_e\\
        \le &\; \left(\sum_{E \in \Omega_h}\sum_{e\in\mathcal{F}^0_h\cap\partial E} \left\|[\nu\nabla\Pi^{\nabla,E}\mathbf{u}_h - p_h \mathbf{I}]\,\mathbf{n}_e \right\|_e^2 \right)^{1/2} \; \left(\sum_{E \in \Omega_h}\sum_{e\in\mathcal{F}^0_h\cap\partial E}\|e_{\mathbf{u}} - e_{\mathbf{u},I}\|_e^2\right)^{1/2}\\        
        \le&\;\frac{\sqrt{2}C_{\text{tra}} }{2}  \left(\sum_{E \in \Omega_h}\sum_{e\in\mathcal{F}^0_h\cap\partial E} \left\|[\nu\nabla\Pi^{\nabla,E}\mathbf{u}_h - p_h \mathbf{I}]\,\mathbf{n}_e \right\|_e^2 \right)^{1/2} \left(\sum_{E \in \Omega_h} h_E^{-1} \|e_{\mathbf{u}} - e_{\mathbf{u},I}\|^{2}_E + h_E |e_{\mathbf{u}} - e_{\mathbf{u},I}|_{1,E}^2 \right)^{1/2}\\
        \le&\;C_{\text{tra}}\widetilde{C}_{I} \left(\sum_{E \in \Omega_h}\sum_{e\in\mathcal{F}^0_h\cap\partial E} \left\|[\nu\nabla\Pi^{\nabla,E}\mathbf{u}_h - p_h \mathbf{I}]\,\mathbf{n}_e \right\|_e^2 \right)^{1/2} \left(\sum_{E \in \Omega_h} h_E\, |e_\mathbf{u}|_{1,\widetilde{E}}^2  \right)^{1/2} .  
    \end{aligned}
$$
Recalling the Assumption \ref{assumption regularity}, we arrive at
\begin{equation}	\label{J5}
    \begin{aligned}
        \mathcal{J}_5\le&\;\frac{\sqrt{2}C_{\text{tra}}\widetilde{C}_{I}C_{\widetilde{E}}}{\sqrt{\rho}}\left(\frac{1}{2}\sum_{E \in \Omega_h}\sum_{e\in\mathcal{F}^0_h\cap\partial E} h_e\left\|[\nu\nabla\Pi^{\nabla,E}\mathbf{u}_h - p_h \mathbf{I}]\,\mathbf{n}_e \right\|_e^2 \right)^{1/2} \interleave e_\mathbf{u}\interleave\\
        := &\;C_{\mathcal{J}_5}\left(\frac{1}{2}\sum_{E \in \Omega_h}\sum_{e\in\mathcal{F}^0_h\cap\partial E} h_e\left\|[\nu\nabla\Pi^{\nabla,E}\mathbf{u}_h - p_h \mathbf{I}]\,\mathbf{n}_e \right\|_e^2 \right)^{1/2} \interleave e_\mathbf{u}\interleave.
    \end{aligned}
\end{equation}

Since $\nabla\cdot \mathbf{u}_h = 0$ and \eqref{continue form b}, then $b(e_\mathbf{u}, e_p) = 0$. Thus, combining  \eqref{J1}-\eqref{J5} to \eqref{posteriori error eq}, we deduce that
\begin{equation}	\label{the estimate of velocity in proof}
    \nu\interleave\mathbf{u} - \mathbf{u}_h\interleave\le C_{\eta,u} \,\eta,
\end{equation} 
where the positive constant $C_{\eta,u}:= \max\left\{C_{\mathcal{J}_1}+C_{\mathcal{J}_4}, 2\left( C_{\mathcal{J}_2} + C_{\mathcal{J}_3}\right), 2\left(C_{\mathcal{J}_4} + C_{\mathcal{J}_5}\right)\right\}$.

To end the proof, we remain to estimate the error for the pressure. It follows from the \eqref{continue weak formulation 1} and \eqref{discrete scheme 1},  the orthogonality of $\Pi^{\nabla,E}$ and $\Pi^{0,E}$  that
\begin{equation}	\label{pressure error eq 1}
    \begin{aligned}
        (\nabla\cdot\mathbf{v}, p- p_h)=&\; b(\mathbf{v},p) - b(\mathbf{v},p_h)\\
        =&\;(\mathbf{f},\mathbf{v}) - \nu a(\mathbf{u},\mathbf{v}) - \left(\nabla\cdot(\mathbf{v} - \mathbf{v}_I), p_h\right) - (\nabla\cdot\mathbf{v}_I, p_h)\\
        =&\;- \nu a(\mathbf{u},\mathbf{v}) + \nu a_h(\mathbf{u}_h, \mathbf{v}_I) +(\mathbf{f},\mathbf{v})- (\mathbf{f},\mathcal{R}_h \mathbf{v}_I)  - \left(\nabla\cdot(\mathbf{v} - \mathbf{v}_I), p_h\right)  \\
        =&\; -\nu\left(\nabla\mathbf{u} - \nabla\Pi^{\nabla,h}\mathbf{u}_h, \nabla\mathbf{v}\right) - \nu\left(\nabla\Pi^{\nabla,h}\mathbf{u}_h, \nabla\mathbf{v} - \nabla\mathbf{v}_I + \nabla\mathbf{v}_I - \nabla\Pi^{\nabla,h}\mathbf{v}_I \right)\\
        &\; -\nu\left(\kappa^{-1}(\mathbf{u} - \Pi^{0,h}\mathbf{u}_h), \mathbf{v}\right) -\nu\left(\kappa^{-1}\Pi^{0,h}\mathbf{u}_h, \mathbf{v} - \mathbf{v}_I + \mathbf{v}_I - \Pi^{0,h}\mathbf{v}_I\right)\\
        &\; +\nu\sum_{E \in \Omega_h}  S^{\nabla}_E \left((I- \Pi^{\nabla,E})\mathbf{u}_h, (I- \Pi^{\nabla,E})\mathbf{v}_I\right) + S^0_E\left((I- \Pi^{0,E})\mathbf{u}_h, (I- \Pi^{0,E})\mathbf{v}_I\right)\\
        &\;- \left(\nabla\cdot(\mathbf{v} - \mathbf{v}_I), p_h\right) + (\mathbf{f},\mathbf{v}) - (\mathbf{f},\mathcal{R}_h \mathbf{v}_I).
    \end{aligned}
\end{equation}
Using the integration by parts, we derive the following error equation of pressure
\begin{equation}	\label{pressure error eq final}
    \begin{aligned}
         &(\nabla\cdot\mathbf{v}, p- p_h)\\
        =&\;   -\nu\left(\nabla(\mathbf{u} - \mathbf{u}_h + \mathbf{u}_h - \Pi^{\nabla,h}\mathbf{u}_h), \nabla\mathbf{v}\right)  -\nu\left(\kappa^{-1}(\mathbf{u} - \mathbf{u}_h + \mathbf{u}_h - \Pi^{0,h}\mathbf{u}_h), \mathbf{v}\right) -\nu\left(\kappa^{-1}\Pi^{0,h}\mathbf{u}_h, \mathbf{v} - \mathbf{v}_I\right)\\
        &\;+\nu\sum_{E \in \Omega_h}  S^{\nabla}_E \left((I- \Pi^{\nabla,E})\mathbf{u}_h, (I- \Pi^{\nabla,E})\mathbf{v}_I\right) + S^0_E\left((I- \Pi^{0,E})\mathbf{u}_h, (I- \Pi^{0,E})\mathbf{v}_I\right)\\
        &\;+ (\mathbf{f},\mathbf{v}-\mathbf{v}_I) - (\mathbf{f},\mathcal{R}_h \mathbf{v}_I - \mathbf{v}_I) -  \sum_{E \in \Omega_h}\sum_{e\in\mathcal{F}^0_h\cap\partial E} \left([\nu \nabla\Pi^{\nabla,E}\mathbf{u}_h - p_h \mathbf{I}]\mathbf{n}_e, \mathbf{v} - \mathbf{v}_I\right).
    \end{aligned}
\end{equation}

Then similar to the proofs of \eqref{J1}-\eqref{J5}, and combining the estimate \eqref{the estimate of velocity in proof} of velocity and the discrete inf-sup condition \eqref{discrete inf-sup}, one can easily derive the desired result for the pressure from error equation \eqref{pressure error eq final}.
$$
\| p - p_h\| \le C_{\eta,p} \,\eta,
$$
where the positive constant $C_{\eta,p}$ is independent of $h$. This concludes the proof.    
\end{proof}
 
 \subsection{Local Lower Bound} Before proving the local lower bound, we need to introduce a important tool called bubble function. Under the mesh regularity assumption, each element $E$ admits a sub-triangulation by connecting each vertex of $E$ to the point $\mathbf{x}_E$ with respect to which $E$ is star-shaped. Then, a bubble function $\bm{\psi}_E\in [H^1_0(E)]^2$ can be constructed piecewise as the sum of the cubic bubble functions on each triangle of the sub-triangulation of $E$. An edge bubble function $\bm{\psi}_e$ for $e\subset\partial E$ is a piecewise quadratic function with a value of $\mathbf{1}$ at the middle point of $e$ and $\mathbf{0}$ on the triangles that do not include $e$ as their boundary. The bubble functions have the following properties \cite{Wang2020post, Wang2021post}
 \begin{lema}
     On each element $E\in\Omega_h$, the bubble function $\bm{\psi}_E$ satisfies 
     \begin{subequations}	\label{bubble properties}
         \begin{align}
            & C_{B_1}\|\mathbf{p}_1\|^2_E \le \left(\bm{\psi}_E\mathbf{p}_1, \mathbf{p}_1\right)_E \le C_{B_2} \|\mathbf{p}_1\|^2_E \qquad \forall\,\mathbf{p}_1\in\mathbf{P}_1(E),\\
            & C_{B_3}\|\mathbf{p}_1\|^2_E \le \|\bm{\psi}_E\mathbf{p}_1\|_E + h_E\|\nabla(\bm{\psi}_E\mathbf{p}_1)\|_E \le C_{B_4} \|\mathbf{p}_1\|^2_E \qquad \forall\,\mathbf{p}_1\in\mathbf{P}_1(E),
         \end{align} 
     \end{subequations}
where the positive constants $C_{B_1},C_{B_2},C_{B_3}$ and $C_{B_4}$ are independent of $h_E$. Moreover, For any $e\subset \partial E$ the edge bubble function $\bm{\psi}_e$ satisfies    
\begin{subequations}	\label{edge bubble properties}
    \begin{align}
        & C_{b_1}\|\mathbf{p}_1\|^2_e \le \left(\bm{\psi}_e\mathbf{p}_1, \mathbf{p}_1\right)_e \le C_{b_2} \|\mathbf{p}_1\|^2_e \qquad \forall\,\mathbf{p}_1\in\mathbf{P}_1(e),\\
        & h^{-1/2}_E\|\bm{\psi}_e\mathbf{p}_1\|_E + h^{1/2}_E\|\nabla(\bm{\psi}_e\mathbf{p}_1)\|_E \le C_{b_3} \|\mathbf{p}_1\|_e \qquad \forall\,\mathbf{p}_1\in\mathbf{P}_1(e),
    \end{align} 
\end{subequations}
where the positive constants $C_{b_1},C_{b_2},C_{b_3}$ and $C_{b_4}$ are independent of $h_E$ and $h_e$, and $\bm{\psi}_e\mathbf{p}_1$ is extended from $e$ to $E$ by the technique introduced in Remark 3.1 of \cite{Mora2017post}.
\end{lema}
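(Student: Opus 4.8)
The plan is to establish all four estimates by the classical bubble-function technique of Verf\"urth, adapted to the polygonal setting via the sub-triangulation of $E$ furnished by the star-shapedness in Assumption \ref{assumption regularity}. The governing principle is norm equivalence on finite-dimensional polynomial spaces, combined with affine scaling to a fixed reference simplex so that all constants depend only on the shape-regularity parameter $\rho$ and not on $h_E$. Since both $\bm{\psi}_E$ and $\bm{\psi}_e$ are built piecewise on shape-regular sub-triangles, I would carry out every estimate triangle-by-triangle on the reference element and then reassemble by summation.

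For the element bubble estimates, I would first observe that $(\mathbf{p},\mathbf{q}) \mapsto (\bm{\psi}_E \mathbf{p}, \mathbf{q})_E$ is a symmetric bilinear form on $\mathbf{P}_1(E)$. Because $\bm{\psi}_E$ is a nonnegative piecewise-cubic bubble that is strictly positive in the interior of each sub-triangle, this form is an inner product on the finite-dimensional space $\mathbf{P}_1(E)$, so $(\bm{\psi}_E\mathbf{p}_1,\mathbf{p}_1)_E^{1/2}$ and $\|\mathbf{p}_1\|_E$ are equivalent norms. To make the constants $h_E$-independent, I would transform each sub-triangle to the reference triangle, where the cubic bubble is a fixed function, establish the equivalence there with absolute constants, and scale back; under Assumption \ref{assumption regularity} every sub-triangle is shape-regular, so the Jacobians are comparable and the constants $C_{B_1}, C_{B_2}$ survive the summation. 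This gives the first line. For the second line, the upper bound follows from $0 \le \bm{\psi}_E \le 1$, which yields $\|\bm{\psi}_E\mathbf{p}_1\|_E \le \|\mathbf{p}_1\|_E$, together with the product rule $\nabla(\bm{\psi}_E\mathbf{p}_1) = (\nabla\bm{\psi}_E)\mathbf{p}_1 + \bm{\psi}_E\nabla\mathbf{p}_1$ and the scaling $\|\nabla\bm{\psi}_E\|_{\infty,E} \le C h_E^{-1}$, so that $h_E\|\nabla(\bm{\psi}_E\mathbf{p}_1)\|_E \le C\|\mathbf{p}_1\|_E$; the lower bound is inherited from the first line, since $\|\bm{\psi}_E\mathbf{p}_1\|_E \ge (\bm{\psi}_E\mathbf{p}_1,\mathbf{p}_1)_E/\|\mathbf{p}_1\|_E \ge C_{B_1}^{1/2}\|\mathbf{p}_1\|_E$ after the same norm-equivalence argument.

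The edge-bubble estimates proceed analogously. The first inequality is the one-dimensional version: $(\bm{\psi}_e\mathbf{p},\mathbf{q})_e$ is an inner product on $\mathbf{P}_1(e)$, and scaling $e$ to a reference interval produces $h_e$-independent (hence, by regularity, $h_E$-independent) constants $C_{b_1}, C_{b_2}$. For the second inequality I would use the explicit extension of $\bm{\psi}_e\mathbf{p}_1$ from $e$ into $E$ described in Remark 3.1 of \cite{Mora2017post}, which is supported on the sub-triangles adjacent to $e$; a scaling argument relating the edge and element measures, namely $|e| \sim h_E$ and $|E| \sim h_E^2$, produces the factor $h_E^{-1/2}$ on the $L^2$-term, and combining this with $\|\nabla(\cdot)\|_{\infty} \sim h_E^{-1}$ for the extension produces the factor $h_E^{1/2}$ on the gradient term.

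I expect the only genuine obstacle to be the uniformity of the constants in the polygonal setting: because $E$ is not a fixed reference shape, one cannot map $E$ itself to a single reference element. The remedy is precisely the triangle-by-triangle argument on the star-shaped sub-triangulation, where Assumption \ref{assumption regularity} guarantees that each sub-triangle has aspect ratio controlled by $\rho$; this is what allows the reference-simplex scaling to yield constants depending only on $\rho$. Everything else is a routine consequence of norm equivalence on finite-dimensional spaces together with standard inverse and scaling estimates, so I would keep those computations brief.
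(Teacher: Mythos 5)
The paper offers no proof of this lemma to compare against: it quotes the bubble-function properties directly from \cite{Wang2020post,Wang2021post} and refers to Remark 3.1 of \cite{Mora2017post} for the normal prolongation of $\bm{\psi}_e\mathbf{p}_1$. Your argument is precisely the standard Verf\"urth-type proof that underlies those citations, and it is correct: the triangle-by-triangle reduction to a fixed reference simplex, norm equivalence of $(\bm{\psi}\,\cdot,\cdot)^{1/2}$ and $\|\cdot\|$ on the finite-dimensional space $\mathbf{P}_1$, and the observation that Assumption \ref{assumption regularity} makes every sub-triangle of the star-shaped decomposition shape-regular with parameter controlled by $\rho$, so all constants survive reassembly independently of $h_E$. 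The treatment of the edge bubble (1D norm equivalence on $e$, then the measure count $|e|\sim h_E$, $|E|\sim h_E^2$ for the prolongation, giving the $h_E^{-1/2}$ and $h_E^{1/2}$ weights) is likewise the standard route and is sound.

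Two minor remarks, neither of which affects correctness. First, the second element-bubble inequality as printed in the paper is dimensionally inhomogeneous ($\|\mathbf{p}_1\|_E^2$ on both outer sides, but unsquared norms in the middle); your proof in fact establishes the consistent version
\begin{equation*}
C_{B_3}\|\mathbf{p}_1\|_E \le \|\bm{\psi}_E\mathbf{p}_1\|_E + h_E\|\nabla(\bm{\psi}_E\mathbf{p}_1)\|_E \le C_{B_4}\|\mathbf{p}_1\|_E ,
\end{equation*}
which is the statement actually used in the efficiency argument, so you have implicitly corrected a typo rather than introduced an error. Second, in your Cauchy--Schwarz step for the lower bound the constant comes out as $C_{B_1}$, not $C_{B_1}^{1/2}$: from $(\bm{\psi}_E\mathbf{p}_1,\mathbf{p}_1)_E \ge C_{B_1}\|\mathbf{p}_1\|_E^2$ and $(\bm{\psi}_E\mathbf{p}_1,\mathbf{p}_1)_E \le \|\bm{\psi}_E\mathbf{p}_1\|_E\,\|\mathbf{p}_1\|_E$ one gets $\|\bm{\psi}_E\mathbf{p}_1\|_E \ge C_{B_1}\|\mathbf{p}_1\|_E$. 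Since the constants are unspecified anyway, this is immaterial.
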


\begin{thm} 
    (Efficiency). Let $(\mathbf{u}, p)\in\mathbf{V}\times Q$ and $(\mathbf{u}_h, p_h)\in\mathbf{V}_h\times Q_h$ be the solutions of \eqref{continue weak formulation} and \eqref{discrete scheme}, respectively. Then, the local estimator $\eta_{r,E}$ satisfy
    \begin{equation}
        \eta_{r,E}^2 \le C_{r} \sum_{E \in \mathcal{E}_e} \left(\nu^2 \interleave \mathbf{u} - \mathbf{u}_h\interleave^2_E  + \|p - p_h\|^2_E + \eta_{S,E}^2 + \eta_{\mathbf{f},E}^2 + \nu^2 h_E^2 \|\kappa^{-1}\Pi^{0,E}\mathbf{u}_h\|^2_E\right),
    \end{equation}
\end{thm}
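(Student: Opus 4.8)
The plan is to estimate the two constituents of $\eta_{r,E}^2$ separately. The element-residual part $\nu^2 h_E^2\|\kappa^{-1}\Pi^{0,E}\mathbf{u}_h\|_E^2$ already appears verbatim on the right-hand side, so no work is needed there; the entire argument concentrates on the interior-edge jump $\tfrac12\sum_{e}h_e\|\mathbf{J}_e\|_e^2$, where $\mathbf{J}_e:=[\nu\nabla\Pi^{\nabla,E}\mathbf{u}_h-p_h\mathbf{I}]\,\mathbf{n}_e$. Since $\Pi^{\nabla,E}\mathbf{u}_h\in\mathbf{P}_1(E)$ and $p_h\in P_0(E)$, the jump $\mathbf{J}_e$ is a vector constant on $e$, hence lies in $\mathbf{P}_1(e)$ and the edge-bubble machinery \eqref{edge bubble properties} applies. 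I would follow the classical Verf\"urth bubble-function technique.

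For a fixed interior edge $e$, set $\mathbf{w}_e:=\bm{\psi}_e\mathbf{J}_e$, extended from $e$ to the patch $\omega_e:=\bigcup_{E'\in\mathcal{E}_e}E'$ so that $\mathbf{w}_e\in[H_0^1(\omega_e)]^2\subset\mathbf{V}$. The lower bound in \eqref{edge bubble properties} gives $C_{b_1}\|\mathbf{J}_e\|_e^2\le(\bm{\psi}_e\mathbf{J}_e,\mathbf{J}_e)_e=(\mathbf{J}_e,\mathbf{w}_e)_e$. The key localisation step is an element-wise integration by parts: because $\Delta\Pi^{\nabla,E'}\mathbf{u}_h=\mathbf{0}$ and $\nabla p_h=\mathbf{0}$, the discrete stress $\sigma_h^{E'}:=\nu\nabla\Pi^{\nabla,E'}\mathbf{u}_h-p_h\mathbf{I}$ is divergence-free on each $E'$; summing over $E'\in\mathcal{E}_e$ and using that $\mathbf{w}_e$ vanishes on $\partial\omega_e$ and on every edge of $\partial E'$ other than $e$ yields $(\mathbf{J}_e,\mathbf{w}_e)_e=\sum_{E'\in\mathcal{E}_e}(\sigma_h^{E'},\nabla\mathbf{w}_e)_{E'}$. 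I would then insert the exact solution: testing \eqref{continue weak formulation} by $\mathbf{w}_e$ and integrating by parts gives $\sum_{E'}(\sigma,\nabla\mathbf{w}_e)_{E'}=(\mathbf{f}-\nu\kappa^{-1}\mathbf{u},\mathbf{w}_e)_{\omega_e}$ with $\sigma:=\nu\nabla\mathbf{u}-p\mathbf{I}$, so adding and subtracting $\sigma$ produces $(\mathbf{J}_e,\mathbf{w}_e)_e=\sum_{E'}(\sigma_h^{E'}-\sigma,\nabla\mathbf{w}_e)_{E'}+(\mathbf{f}-\nu\kappa^{-1}\mathbf{u},\mathbf{w}_e)_{\omega_e}$.

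Each term is then controlled by Cauchy--Schwarz together with the scaling bounds $\|\mathbf{w}_e\|_{E'}\le C_{b_3}h_{E'}^{1/2}\|\mathbf{J}_e\|_e$ and $\|\nabla\mathbf{w}_e\|_{E'}\le C_{b_3}h_{E'}^{-1/2}\|\mathbf{J}_e\|_e$ from \eqref{edge bubble properties}. In $\sigma_h^{E'}-\sigma=\nu\nabla(\Pi^{\nabla,E'}\mathbf{u}_h-\mathbf{u})-(p_h-p)\mathbf{I}$ I would split $\Pi^{\nabla,E'}\mathbf{u}_h-\mathbf{u}=(\Pi^{\nabla,E'}\mathbf{u}_h-\mathbf{u}_h)+(\mathbf{u}_h-\mathbf{u})$, routing the first defect through the stabilisation equivalence \eqref{stabilzation eq} into $\eta_{S,E'}$ and the second into $\nu|\mathbf{u}-\mathbf{u}_h|_{1,E'}\le\nu\interleave\mathbf{u}-\mathbf{u}_h\interleave_{E'}$, while the pressure contribution is bounded by $\|p-p_h\|_{E'}$; likewise in $\mathbf{f}-\nu\kappa^{-1}\mathbf{u}$ I would write $\nu\kappa^{-1}\mathbf{u}=\nu\kappa^{-1}(\mathbf{u}-\mathbf{u}_h)+\nu\kappa^{-1}(\mathbf{u}_h-\Pi^{0,E'}\mathbf{u}_h)+\nu\kappa^{-1}\Pi^{0,E'}\mathbf{u}_h$, producing respectively the velocity error (up to the bounded factor $\|\kappa^{-1/2}\|_\infty$), a stabilisation contribution, and the element residual $\nu h_{E'}\|\kappa^{-1}\Pi^{0,E'}\mathbf{u}_h\|_{E'}$, with $\mathbf{f}$ yielding $\eta_{\mathbf{f},E'}$. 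Cancelling one factor of $\|\mathbf{J}_e\|_e$, multiplying by $h_e$ (comparable to $h_{E'}$ by Assumption \ref{assumption regularity}), and summing over the edges $e\subset\partial E$ and over the patch then gives the asserted bound, with $C_r$ collecting $C_{b_1}$, $C_{b_3}$, the stabilisation constants and the mesh-regularity factor.

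The main obstacle I anticipate is the careful bookkeeping of the $h$-powers: the edge bubble supplies one inverse power $h_{E'}^{-1/2}$ against $\nabla\mathbf{w}_e$ and one positive power $h_{E'}^{1/2}$ against $\mathbf{w}_e$, and these must combine with the extra $h_e$ in the definition of $\eta_{r,E}^2$ so that each summand matches the scaling of exactly one of the five quantities $\nu^2\interleave\mathbf{u}-\mathbf{u}_h\interleave_{E'}^2$, $\|p-p_h\|_{E'}^2$, $\eta_{S,E'}^2$, $\eta_{\mathbf{f},E'}^2$, $\nu^2 h_{E'}^2\|\kappa^{-1}\Pi^{0,E'}\mathbf{u}_h\|_{E'}^2$; in particular the $\eta_{\mathbf{f}}$ term is the one that is \emph{not} multiplied by $\nu$, so its scaling must be tracked independently. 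The genuinely VEM-specific difficulty, absent in the conforming finite element case, is that $\mathbf{u}_h$ is not pointwise computable, which forces the replacement of $\mathbf{u}_h$ by its projections and the systematic conversion of the projection defects into $\eta_{S,E'}$ via \eqref{stabilzation eq}; checking that this conversion is lossless (up to constants) over the whole patch is the delicate part.
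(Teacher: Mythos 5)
Your proposal is correct and takes essentially the same route as the paper's proof: the edge jump is bounded by the Verf\"urth edge-bubble argument with the identical splitting into the velocity and pressure errors, the stabilization defects converted through \eqref{stabilzation eq} into $\eta_{S,E}$, the element residual $\nu h_E\|\kappa^{-1}\Pi^{0,E}\mathbf{u}_h\|_E$, and the force term; the localized identity you re-derive by elementwise integration by parts is exactly the paper's error equation \eqref{posteriori error eq} tested with $\mathbf{v}=\bm{\psi}_e[\nu\nabla\Pi^{\nabla,E}\mathbf{u}_h-p_h\mathbf{I}]\mathbf{n}_e$ and $\mathbf{v}_h=\mathbf{0}$. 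The only deviation is that you skip the interior-bubble estimate \eqref{nu h theta_E}, which the paper proves but which is indeed dispensable for the theorem as stated, since the term $\nu^2 h_E^2\|\kappa^{-1}\Pi^{0,E}\mathbf{u}_h\|_E^2$ is retained on the right-hand side of the claimed bound.
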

where the positive constant $C_r:= C_e\max\{1,C_\theta\}$ is independent of $h_E$.
\begin{proof}
    To be simple, we use $\theta_E$ to denote the local residual $\kappa^{-1}\Pi^{0,E}\mathbf{u}_h$. We readily observe that  $\theta_E$ is a vector linear polynomial on $E$. By using bubble function $\bm{\psi}_E$ on $E$, we set $\mathbf{v} = \bm{\psi}_E\theta_E$ and $\mathbf{v}_h = 0$ in \eqref{posteriori error eq}, and note that $\bm{\psi}_E$ vanishes on boundary $\partial E$, we arrive at
    \begin{equation}
        \begin{aligned}
            &\nu (\nabla e_{\mathbf{u}}, \nabla\bm{\psi}_E \theta_E)_E + \nu(\kappa^{-1}e_{\mathbf{u}},\bm{\psi}_E \theta_E)_E + \left(\nabla\cdot(\bm{\psi}_E \theta_E), e_p\right)_E \\
            =\; &-\nu\left(\nabla(\mathbf{u}_h - \Pi^{\nabla,E}\mathbf{u}_h),\nabla\bm{\psi}_E \theta_E\right)_E - \nu\left(\kappa^{-1}(\mathbf{u}_h - \Pi^{0,E}\mathbf{u}_h), \bm{\psi}_E \theta_E\right)_E + \left(-\nu \theta_E + \mathbf{f}, \bm{\psi}_E\theta_E\right)_E.
        \end{aligned}
    \end{equation}
Utilizing the properties the properties \eqref{bubble properties} of the bubble function $\bm{\psi}_E$, we obtain
$$
   \begin{aligned}
       C_{B_1}\nu\|\theta_E\|^2_E \le&\; \nu(\theta_E, \bm{\psi}_E\theta_E) \\
       =&\; - \nu (\nabla e_{\mathbf{u}}, \bm{\psi}_E \theta_E)_E - \nu (\kappa^{-1}e_{\mathbf{u}}, \bm{\psi}_E \theta_E)_E - \left(\nabla\cdot(\bm{\psi}_E \theta_E), e_p\right)_E + (\mathbf{f}, \bm{\psi}_E \theta_E)	\\
       &\; -\nu\left(\nabla(\mathbf{u}_h - \Pi^{\nabla,E}\mathbf{u}_h),\nabla\bm{\psi}_E \theta_E\right)_E - \nu\left(\kappa^{-1}(\mathbf{u}_h - \Pi^{0,E}\mathbf{u}_h), \bm{\psi}_E \theta_E\right)_E\\
       \le&\;\nu |e_\mathbf{u}|_{1,E} \,|\bm{\psi}_E \theta_E|_{1,E}+ \nu\|\kappa^{-1/2}\|_{\infty}\|\kappa^{-1/2}e_\mathbf{u}\|_E \|\bm{\psi}_E \theta_E\|_E + \|p-p_h\|_E\, |\bm{\psi}_E \theta_E|_{1,E} \\
      &  + \frac{\nu}{(C^\nabla_1)^{1/2}} S^\nabla_E((I-\Pi^{\nabla,E})\mathbf{u}_h, (I-\Pi^{\nabla,E})\mathbf{u}_h)^{1/2} |\bm{\psi}_E \theta_E|_{1,E} + \|\mathbf{f}\|_E\,\|\bm{\psi}_E \theta_E\|_E \\
      & + \frac{\nu\|\kappa^{-1/2}\|_\infty}{(C^0_1)^{1/2}} S^0_E((I-\Pi^{0,E})\mathbf{u}_h, (I-\Pi^{0,E})\mathbf{u}_h)^{1/2}\|\bm{\psi}_E \theta_E\|_E .
   \end{aligned}
$$
Furthermore,
$$
\nu\|\theta_E\|^2_E \le\; C_{\theta_1}\left(\nu|e_\mathbf{u}|_{1,E} + \|e_p\|_E + \eta_{S,E}\right) h^{-1}_E \|\theta_E\|_E + C_{\theta_2} \left(\nu\|\kappa^{-1/2}e_\mathbf{u}\|_E + \|f\|_E + \eta_{S,E} \right)\|\theta_E\|_E,
$$
where the positive constants $C_{\theta_1}:= \frac{C_{B_4}}{C_{B_1}}\cdot\max\left\{2, \frac{1}{(C^\nabla_1)^{1/2}} \right\}$ and $C_{\theta_2}:=\frac{C_{B_4}}{C_{B_1}}\cdot\max\left\{1, \frac{\|\kappa^{-1/2}\|_\infty}{(C^0_1)^{1/2}}, \|\kappa^{-1/2}\|_\infty \right\}$ are both independent of $h_E$. Then, eliminating  $\|\theta_E\|_E$ and multiplying $h_E$ on both sides give
\begin{equation}	\label{nu h theta_E}
    \nu^2 h^2_E\|\theta_E\|^2_E \le C_\theta\,\left( \nu^2\interleave e_\mathbf{u}\interleave_E^2 + \|e_p\|^2_E + \eta_{S,E}^2 +  \eta_{\mathbf{f},E}^2\right),
\end{equation}   
where the positive constant $C_\theta:= 4 (C_{\theta_1} + C_{\theta_2})^2$, and  the boundedness of the mesh size is used.   
   
On the other hand, we define $\theta_e$ as $[\nu\Pi^{\nabla,E}\mathbf{u}_h - p_h \mathbf{I}]\mathbf{n}_e$, and  subsequently extend $\theta_e$ into $\mathcal{E}_e$ via a constant vector prolongation in the direction normal to $e$ (see \cite{Mora2017post}, Remark 3.1). Let $\bm{\psi}_e$ represent an edge bubble function on $e$. By setting $\mathbf{v} = \bm{\psi}_e\theta_e, \mathbf{v}_h = \mathbf{0}$ in \eqref{posteriori error eq}, we derive the following result:
$$
\begin{aligned}
    &\nu \sum_{E \in \mathcal{E}_e}\left(\nabla e_{\mathbf{u}}, \nabla \bm{\psi}_e\theta_e\right)_E + \nu \sum_{E \in \mathcal{E}_e}\left(\kappa^{-1}e_{\mathbf{u}}, \bm{\psi}_e\theta_e \right) +  \sum_{E \in \mathcal{E}_e}(\nabla\cdot(\bm{\psi}_e \theta_e), e_p)_E\\
     =&\; -\nu\sum_{E \in \mathcal{E}_e} \left(\nabla(\mathbf{u}_h - \Pi^{\nabla,E}\mathbf{u}_h), \nabla \bm{\psi}_e \theta_e\right)_E - \nu\sum_{E \in \mathcal{E}_e} \left(\kappa^{-1}(\mathbf{u}_h - \Pi^{0,E}\mathbf{u}_h), \bm{\psi}_e\theta_e\right)_E\\
     &\; + \sum_{E \in \mathcal{E}_e} \left(-\nu\kappa^{-1}\Pi^{0,E}\mathbf{u}_h + \mathbf{f}, \bm{\psi}_e\theta_e\right)_E - \left(\theta_e, \bm{\psi}_e\theta_e\right)_e.
\end{aligned}
$$
Using the properties \eqref{edge bubble properties},   we can deduce that 
$$
\begin{aligned}
   C_{b_1} \|\theta_e\|^2_e \le \left(\theta_e, \bm{\psi}_e\theta_e\right)_e \le&\; C_{e_1} \sum_{E \in \mathcal{E}_e} \left(\nu|e_{\mathbf{u}}|_{1,E} + \|e_p\|_E + \eta_{S,E}\right)|\bm{\psi}_e \theta_e|_{1,E} \\
   &\;+ C_{e_2}\sum_{E \in \mathcal{E}_e} \left(\nu \|\kappa^{-1/2}e_\mathbf{u}\|_E + \nu\|\kappa^{-1}\Pi^{0,E}\mathbf{u}_h\| + \|\mathbf{f}\|_E + \eta_{S,E}\right)\|\bm{\psi}_e \theta_e\|_E\\
   \le&\;C_{e_1} C_{b_3}\sum_{E \in \mathcal{E}_e} h^{-1/2}_E \left(\nu|e_{\mathbf{u}}|_{1,E} + \|e_p\|_E + \eta_{S,E}\right)\|\theta_e\|_{e} \\
   &\;+ C_{e_2}C_{b_3}\sum_{E \in \mathcal{E}_e} h^{1/2}_E \left(\nu\|\kappa^{-1/2}e_\mathbf{u}\|_E + \nu\|\theta_E\| + \|\mathbf{f}\|_E + \eta_{S,E}\right)\|\theta_e\|_e,
\end{aligned}
$$
where $C_{e_1}:= \max\left\{2, \frac{1}{(C^\nabla_1)^{1/2}} \right\}$   and $C_{e_2}:= \max\left\{1, \frac{\|\kappa^{-1/2}\|}{(C^0_1)^{1/2}}, \|\kappa^{-1/2}\|\right\}$. Above, multiplying $h_e$ on both sides yields
\begin{equation}
    \begin{aligned}
            h_e \|\theta_e\|^2_e \le &\;\frac{C_{e_1} C_{b_3}}{C_{b_1}}\sum_{E \in \mathcal{E}_e}  \left(\nu|e_{\mathbf{u}}|_{1,E} + \|e_p\|_E + \eta_{S,E}\right)h_e^{1/2}\|\theta_e\|_{e}\\
            &\; + \frac{C_{e_2}C_{b_3}}{C_{b_1}}\sum_{E \in \mathcal{E}_e}  h_E \left(\nu\|\kappa^{-1/2}e_\mathbf{u}\|_E + \nu\|\theta_E\| + \|\mathbf{f}\|_E + \eta_{S,E}\right)h_{e}^{1/2}\|\theta_e\|_e.
    \end{aligned}
\end{equation}
By cancelling $h^{1/2}_e  \|\theta_e\|$, it follows that there exits a  positive constant $C_e$,  which is related to $\frac{C_{e_1} C_{b_3}}{C_{b_1}}$ and $\frac{C_{e_2}C_{b_3}}{C_{b_1}}$ and independent of $h_E$ and $h_e$, such that
\begin{equation}	\label{he theta_e}
    \frac{1}{2}h_e\|\theta_e\|^2_e \le C_e \sum_{E \in \mathcal{E}_e}\left(\nu^2\interleave e_\mathbf{u}\interleave^2_E + \|e_p\|^2_E + \eta_{S,E}^2 + \eta_{\mathbf{f},E}^2 + \nu^2 h_E^2 \|\theta_E\|^2_E \right).
\end{equation}
By summing over all edges $e\subset \partial E$ for  \eqref{he theta_e}, together with \eqref{nu h theta_E}, we can deduce that
 \begin{equation}
     \eta_{r,E}^2 \le C_r \sum_{E \in \mathcal{E}_e} \left(\nu^2 \interleave e_{\mathbf{u}}\interleave^2_E  + \|e_p\|^2_E + \eta_{S,E}^2 + \eta_{\mathbf{f},E}^2 + \nu^2 h_E^2 \|\theta_E\|^2_E\right), 
 \end{equation}
where $C_r:= C_e\max\{1,C_\theta\}$ is independent of $h_E$. The proof is completed.    
\end{proof}

 \subsection{An adaptive mesh refinement}
 We perform refinement on the local polygon as shown in Fig. \ref{refine mesh}. By connecting the centroid of the heptagon with the midpoint of each edge, we obtain a set of sub-quadrilaterals. It should be noted that when there is a hanging point on an edge, to prevent the formation of degenerate triangles during the refinement process, we directly connect the centroid to the hanging point, rather than connecting the midpoint of the hanging point and the endpoint. Please refer to the mVEM package \cite{mvem} for more details. Then we introduce a adaptive algorithm as follows:
 \begin{figure}[ht]
     \centering  
     \begin{tikzpicture}
         \coordinate (V1) at (0, 0);
         \coordinate (V2) at (2, 0);
         \coordinate (V3) at (3.5, 1.5);       
         \coordinate (V4) at (3.5, 2.9);
         \coordinate (V5) at (1.0, 3.8);
         \coordinate (V6) at (0, 2.9);
         \coordinate (V7) at (-1, 2);
         
         \draw (V1) -- (V2) -- (V3) -- (V4) -- (V5) -- (V6) -- (V7) -- cycle;     
         \foreach \i in {1,2,...,7} {
             \node[circle, fill, inner sep=1.5pt] at (V\i) {};
         }
         \pgfmathsetmacro{\cx}{(0 + 2 + 3.5 + 3.5 + 1.0 + 0 + (-1))/7}
         \pgfmathsetmacro{\cy}{(0 + 0 + 1.5 + 2.9 + 3.8 + 2.9 + 2)/7}
         \coordinate (Centroid) at (\cx, \cy);
         \coordinate (M1) at ($(V1)!0.5!(V2)$);
         \coordinate (M2) at ($(V2)!0.5!(V3)$);
         \coordinate (M3) at ($(V3)!0.5!(V4)$);
         \coordinate (M4) at ($(V4)!0.5!(V5)$);
         \coordinate (M5) at ($(V5)!0.5!(V6)$);
         \coordinate (M6) at ($(V6)!0.5!(V7)$);
         \coordinate (M7) at ($(V7)!0.5!(V1)$);
         
         \draw[dashed] (M1) -- (Centroid);
         \draw[dashed] (M2) -- (Centroid);
         \draw[dashed] (M3) -- (Centroid);
         \draw[dashed] (M4) -- (Centroid);
         \draw[dashed] (V6) -- (Centroid);
         \draw[dashed] (M7) -- (Centroid);
         \node[circle, fill, red, inner sep=1.5pt] at (Centroid) {};
     \end{tikzpicture}
     \caption{Illustration  of refining a local heptagon.}
     \label{refine mesh}
 \end{figure}
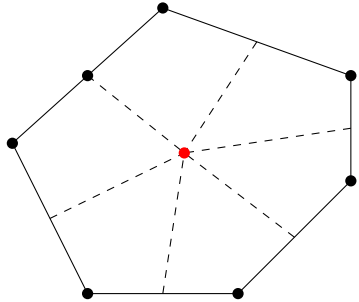
 
 Given an initial coarse mesh $\Omega_h$, we compute the numerical solution $\left(\mathbf{u}_h , p_h\right)$ of \eqref{discrete scheme}, and the local estimator $\eta_E$ on each element of the mesh. Then, we choose a set  $\mathcal{T}_h \subset \Omega_h$ with minimum number such that
 \begin{equation}
     \sum_{E\in \mathcal{T}_h} \eta_E^2\ge \;\delta\sum_{E\in \Omega_h} \eta_E^2 ,
 \end{equation}
 where $\delta \in (0,1)$ is a  marking parameter. We can briefly summarize this procedure with the well-known four steps: 
 $$
 \text{Solve} \rightarrow \text{Estimate}\rightarrow \text{Mark} \rightarrow \text{Refine}.
 $$
 The loop is stopped by a user-specified iteration number $\mathcal{L}$, or until the number $\mathcal{N}$ of nodes in the mesh satisfies $\mathcal{N}\ge \textbf{tol}$, where \textbf{tol} is the user-specified tolerance of nodal number.

\section{Numerical results}
    In this section we present some examples illustrate the method's practical performance. As we emphasized in \cite{xiong2024}, we cannot pointwise access to the numerical solution of velocity $\mathbf{u}_h\in
    \mathbf{V}_h$ within the element. Consequently, we use the projections $\Pi^{\nabla,h} \mathbf{u}_h$ and $\Pi^{0,h} \mathbf{u}_h$ as a substitute for $\mathbf{u}_h$ to calculate the error. Considering the computable error quantity
    $$
     \left(\sum_{E \in \Omega_h}\left\| \nabla\mathbf{u}- \nabla{\Pi}^{\nabla, E} \mathbf{u}_h\right\|_{0, E}^2 + \left\| \kappa^{-1/2}\left(\mathbf{u}- {\Pi}^{0, E} \mathbf{u}_h\right)\right\|_{0, E}^2\right)^{1 / 2}
    $$
    as approximation of $\interleave\mathbf{u} - \mathbf{u}_h\interleave$. And for the error of pressures we directly compute
    $$
   \|p-p_h\|:=\left(\sum_{E \in \Omega_h}\left\|p - p_h\right\|_{0, E}^2\right)^{1 / 2}.
    $$
 Moreover, three types of polygonal meshes will be used in the following Numerical examples, that is  non-convex mesh, square mesh and Voronoi mesh generated by the PolyMesher \cite{polymesher2012}  (see Fig. \ref{somes meshes} for illustration).
\begin{figure}[htbp] 
    \centering  
    \vspace{-0.1cm} 
    \subfigtopskip=2pt 
    \subfigbottomskip=5pt 
    \subfigcapskip=0pt 
    \subfigure[Non-convex mesh]{
        \label{level.sub.non-convex}
        \includegraphics[width=0.31\linewidth]{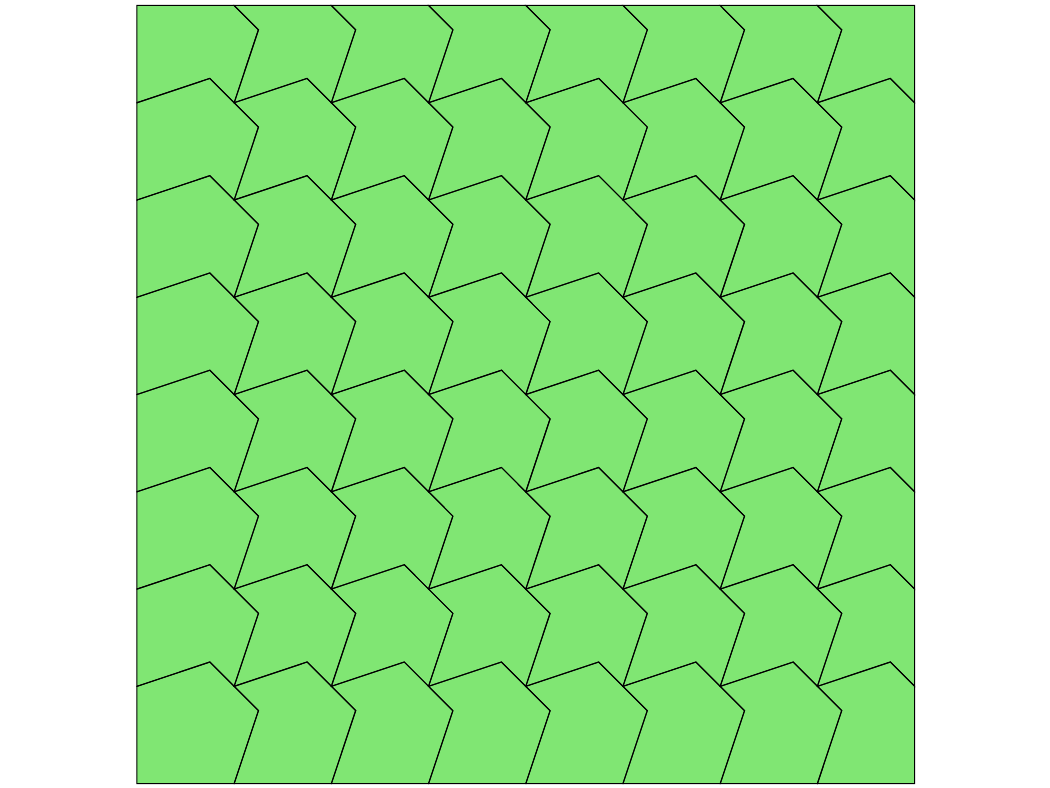}}         
    \subfigure[Square mesh]{
        \label{level.sub.rec}
        \includegraphics[width=0.31\linewidth]{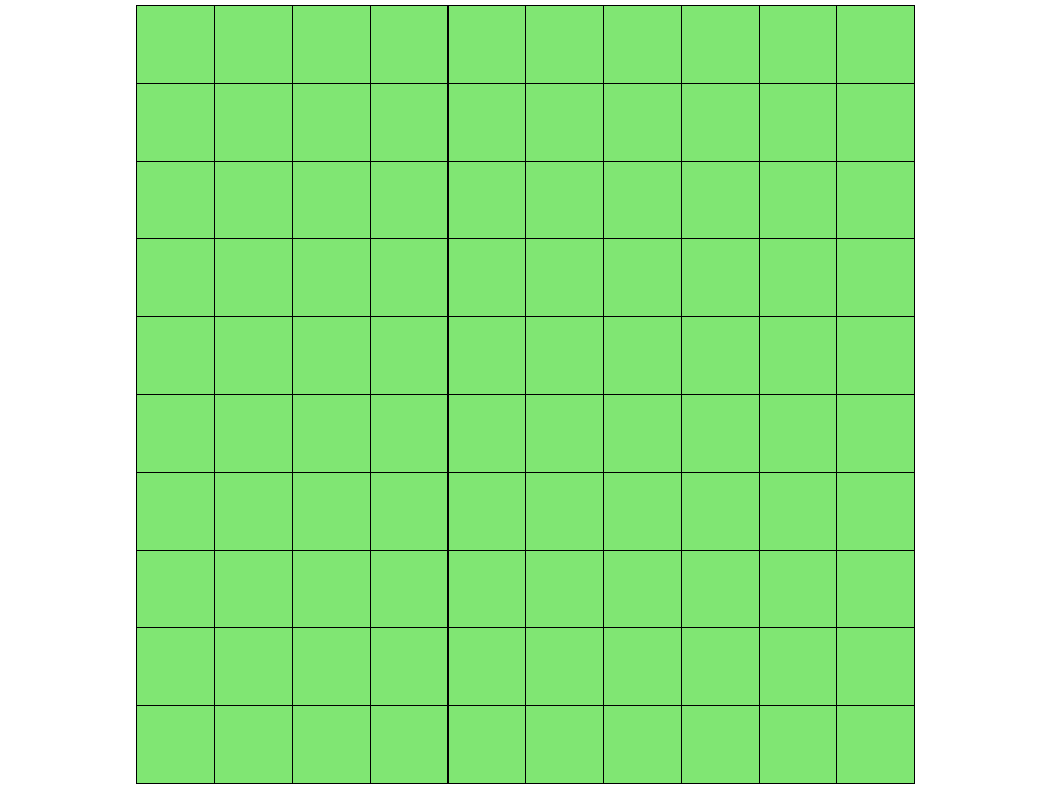}}          
    \subfigure[Voronoi mesh]{
        \label{level.sub.poly}
        \includegraphics[width=0.31\linewidth]{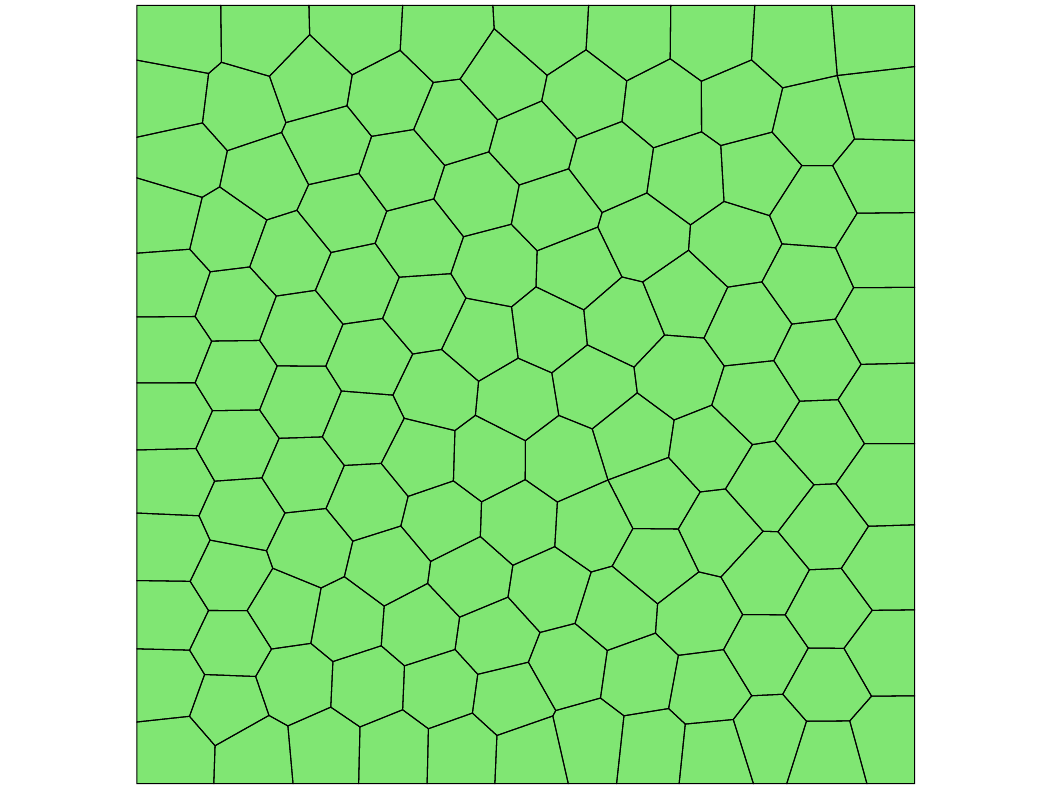}}
    
    \caption{Illustration of three types of meshes.}
    \label{somes meshes}
\end{figure}

\subsection{Numerical tests for a priori error estimate}
In this subsection, three numerical experiments are investigated  to validate our theorem \ref{thm prior u1 and p}  and  numerically simulate the flow of fluids in practical problems. 
\begin{exam}\rm{Let the domain be square $\Omega = (0, 1)\times (0, 1)$, which is partitioned into non-convex mesh Fig. \ref{level.sub.non-convex} . To study the robustness of our virtual element methods to the parameter $\nu$, we fixed $\kappa = 1$ and select different $\nu = 1,10^{-4}, 10^{-8}, 10^{-12}$.  The exact solution for \eqref{Brinkman equations} is chosen as follows : 
    \begin{equation}
        \mathbf{u} = \left(\begin{aligned}
            \sin(2\pi x)\cos(2\pi y)\\
            -\cos(2\pi x)\sin(2\pi y)
        \end{aligned}\right),\quad p = x^2 y^2 - 1/9,
    \end{equation}
where the load function $\mathbf{f}$ is suitably chosen, and it is easy to check that $\nabla\cdot\mathbf{u} = 0$ and $(p,1) = 0$.
    
We show the numerical results on  Table \ref{table1 error for E1}. Compared to the standard VEM scheme, our pressure-robust VEM scheme is Locking-free for viscosity $\nu \rightarrow 0$. And apparently, the error of velocity does not even change due to $\nu$,  which is consistent with what we have been discussing in remark \ref{remark robust}. Moreover, when the viscosity is relatively small, the pressure error shows hardly change with respect to the viscosity, which aligns with Theorem \ref{thm prior u1 and p}. These numerical results demonstrate that the pressure-robust VEM scheme is accurate and robust.

\begin{table}[htbp]
        \caption{Example 1: Comparison of the standard and pressure-robust virtual element scheme for  $\nu = 1,10^{-4}, 10^{-8}, 10^{-12}$ with a fixed $\kappa=1$.}
        \centering	\label{table1 error for E1}   
        \begin{tabular}{c|ccccc|cccc}      
      \toprule  \multirow{2}{*}{$\nu$}& &\multicolumn{4}{c}{Standard VEM } &   \multicolumn{4}{c}{Pressure-robust VEM}  \\
          & $h$& $\interleave\mathbf{u}-\mathbf{u}_h\interleave$ & Rate & $\|p - p_h\|$ & Rate & $\interleave\mathbf{u}-\mathbf{u}_h\interleave$ & Rate & $\|p - p_h\|$ & Rate \\
         \hline
         \multirow{4}{*}{1} &    1.00e-01  &   1.681e+00   &   -  &   1.997e-01 &    - 	&	 1.681e+00   &   -   &  2.328e-01 &        -\\
          &  5.00e-02  &   8.175e-01  &    1.04   &  7.290e-02   &  1.45 	&	  8.172e-01  &    1.04    & 8.049e-02  &   1.53
            \\
          &  2.50e-02  &   4.042e-01   &   1.02   &  3.095e-02   &  1.24 		&  4.041e-01  &    1.02   &  3.216e-02  &   1.32
            \\
         & 1.25e-02  &   2.015e-01   &   1.00   &  1.499e-02   &  1.05 	&	  2.015e-01 &     1.00   &  1.497e-02  &   1.10
          \\
         \hline
         \multirow{4}{*}{1e-4} &    1.00e-01 &    1.688e+00  &    -&     2.157e-02    & - 	&	1.681e+00  &   -  &  2.157e-02 & - \\
         &  5.00e-02   &  8.177e-01  &    1.05  &   1.106e-02  &   0.96 &		8.172e-01   &   1.04   &  1.106e-02  &   0.96 \\
         &  2.50e-02   &  4.042e-01   &   1.02  &   5.034e-03  &   1.14 &		4.041e-01  &    1.02   &  5.034e-03  &   1.14\\
         & 1.25e-02   &  2.015e-01   &   1.00    & 2.655e-03   &  0.92 	&	2.015e-01  &    1.00   &  2.655e-03     &0.92
         \\
         \hline
         \multirow{4}{*}{1e-8} &    1.00e-01 &     $\times$  &    - &    2.157e-02   &  - 	&	1.681e+00   &   -    & 2.157e-02  &   - 
          \\
         &  5.00e-02  & $\times$   &   -    & 1.106e-02   &  0.96 &		8.172e-01  &    1.04   &  1.106e-02  &   0.96
          \\
         &  2.50e-02  & $\times$   &  -    &  5.034e-03   &  1.14 	&	4.041e-01     & 1.02   &  5.034e-03   &  1.14
         \\
         & 1.25e-02  &  $\times$   &   -   &   2.655e-03   &  0.92 	&	2.015e-01  &    1.00   &  2.655e-03   &  0.92\\
         \hline     
         \multirow{4}{*}{1e-12} &    1.00e-01 &     $\times$    &  -    &  2.157e-02 &    - &		1.681e+00    &  -  &   2.157e-02   &  - 
         \\
         &  5.00e-02 & $\times$    &  -   &   1.106e-02   &  0.96 &		8.172e-01    &  1.04   &  1.106e-02 &    0.96
         \\
         &  2.50e-02  & $\times$  &    -   &   5.034e-03 &    1.14 	&	4.041e-01    &  1.02  &   5.034e-03 &    1.14
         \\
         & 1.25e-02  &  $\times$   &  -    &  2.655e-03  &   0.92 	&	 2.015e-01   &   1.00   &  2.655e-03   &  0.92\\
         \bottomrule      
        \end{tabular}
\end{table}
}
 
\end{exam}

In the following two high contrast permeability numerical examples, a $128\times 128$ square mesh is used to test the robustness of our scheme with respect to the permeability $\kappa$.  The following two test problems have the same data setting:
$$
\Omega = (0,1)\times(0,1),\qquad \nu = 0.01,\qquad \mathbf{f} = \mathbf{0}\quad \text{and}\quad \mathbf{u} = \begin{pmatrix}
    1 \\
    0
\end{pmatrix}\; \text{on}\;\partial\Omega. 
$$
\begin{exam}
   \rm{(Fibrous structure). This example is frequently used in filtration and insulation materials. The inverse of permeability of fibrous structure is shown in Fig. \ref{Ex2 kappa}, where $\kappa^{-1}_{\min} =1$ and $\kappa^{-1}_{\max} =10^{6}$ in the red and blue regions, respectively. The first and the second components of the velocity calculated by our scheme are shown in Fig. \ref{Ex2 u1} and \ref{Ex2 u2}, respectively. The pressure profile of the pressure is presented in Fig. \ref{Ex2 pressure}. It is evident that the outcomes are congruent with those reported in the study by Zhai et al. \cite{zhai2016new}.   
 }
 \begin{figure}[htbp] 
     \centering  
     \vspace{-0.1cm} 
     \subfigtopskip=2pt 
     \subfigbottomskip=8pt 
     \subfigcapskip=0pt 
     \subfigure[$\kappa^{-1}$]{
         \label{Ex2 kappa}
         \includegraphics[width=0.30\linewidth]{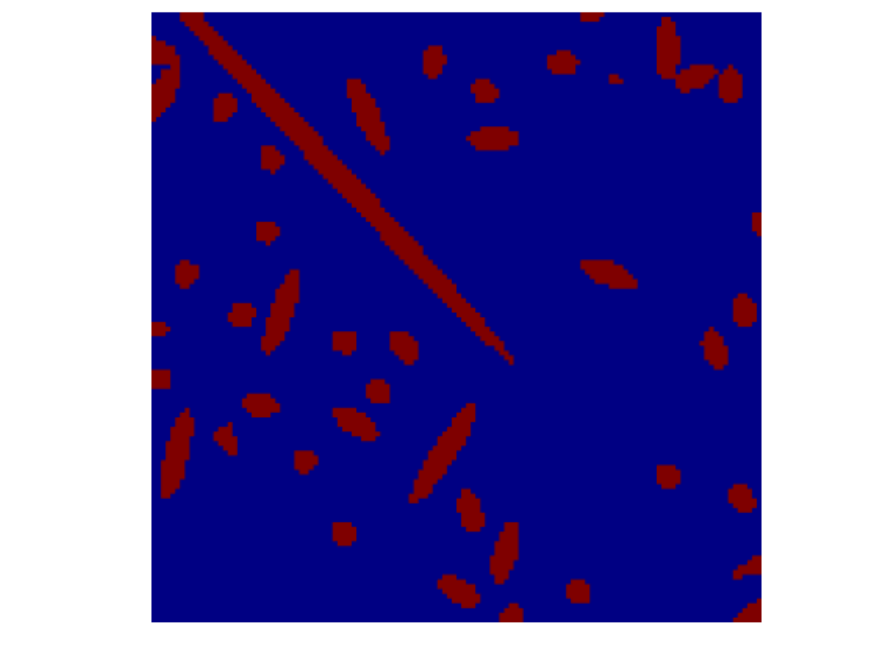}}
     \subfigure[Velocity field]{
         \label{Velocity field 6.2}
         \includegraphics[width=0.30\linewidth]{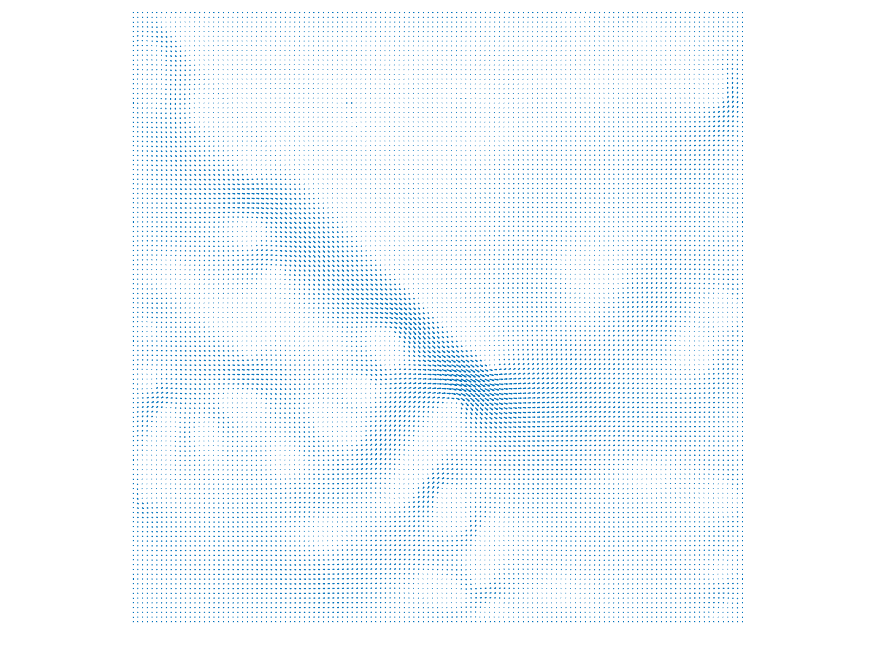}}                
  
    \subfigure[$u_{h,1}$]{
             \label{Ex2 u1}
             \includegraphics[width=0.30\linewidth]{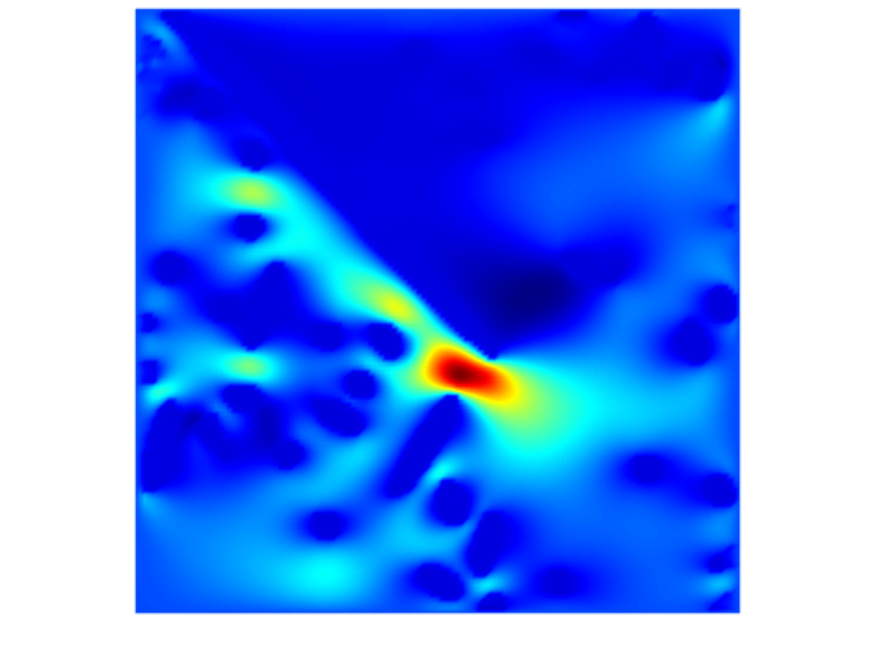}}         
     \subfigure[$u_{h,2}$]{
             \label{Ex2 u2}
             \includegraphics[width=0.30\linewidth]{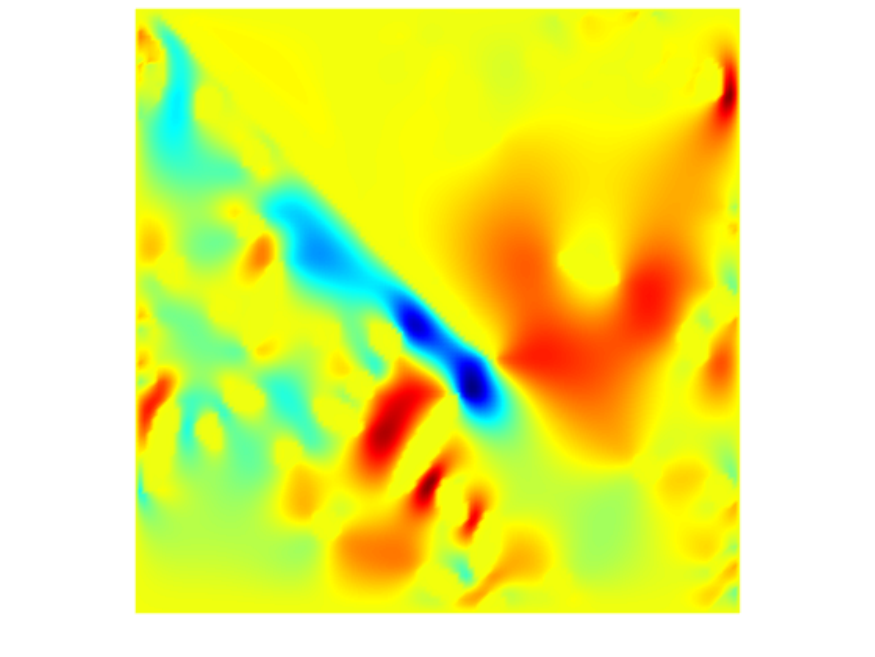}}
     \subfigure[$p_h$]{
         \label{Ex2 pressure}
         \includegraphics[width=0.30\linewidth]{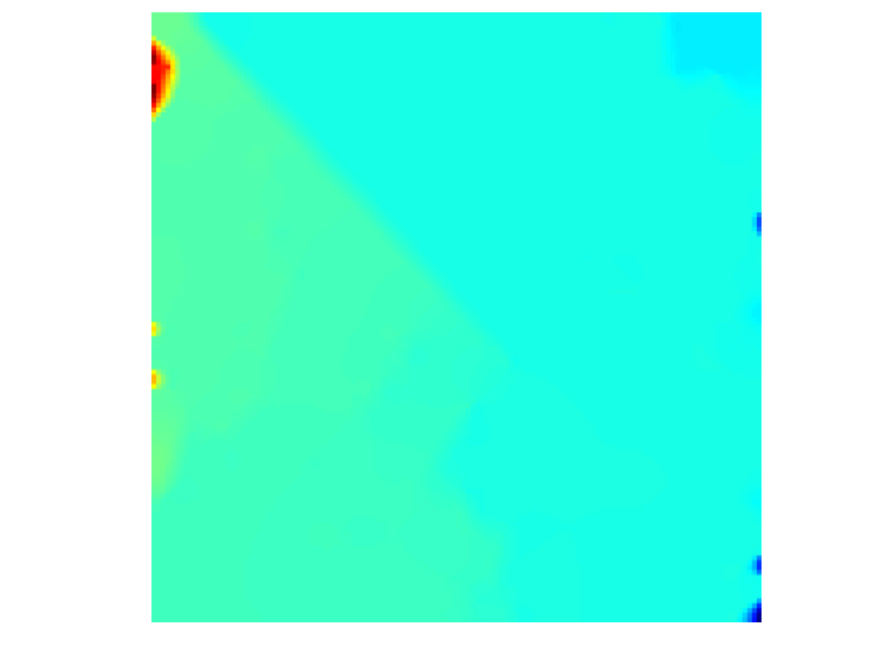}}                
     \caption{Example 6.2: Numerical results. (a) Profile of $\kappa^{-1}$ for fibrous structure; (b) Velocity field of $\mathbf{u}_h$;  (c) First component of velocity $u_{h,1}$; (d) second component of velocity $u_{h,2}$; (e) pressure profile $p_h$;}
     \label{Ex2-1}
 \end{figure}

\end{exam}

\begin{exam}
    \rm{(Open foam). The geometry of this example is an open foam with a profile of $\kappa^{-1}$ shown in Fig. \ref{Ex3 kappa}, where $\kappa^{-1}_{\min} =1$ and $\kappa^{-1}_{\max} =10^{6}$ in the red and blue regions, respectively. The profiles of the approximate pressure and velocity are presented in Fig. \ref{Ex3-1}. It is observed that the findings are analogous to those presented in the work by Wang et al. \cite{wang2021Brinkman}.
        
    }  
    
    \begin{figure}[htbp] 
        \centering  
        \vspace{-0.1cm} 
        \subfigtopskip=2pt 
        \subfigbottomskip=8pt 
        \subfigcapskip=0pt 
        \subfigure[$\kappa^{-1}$]{
            \label{Ex3 kappa}
            \includegraphics[width=0.30\linewidth]{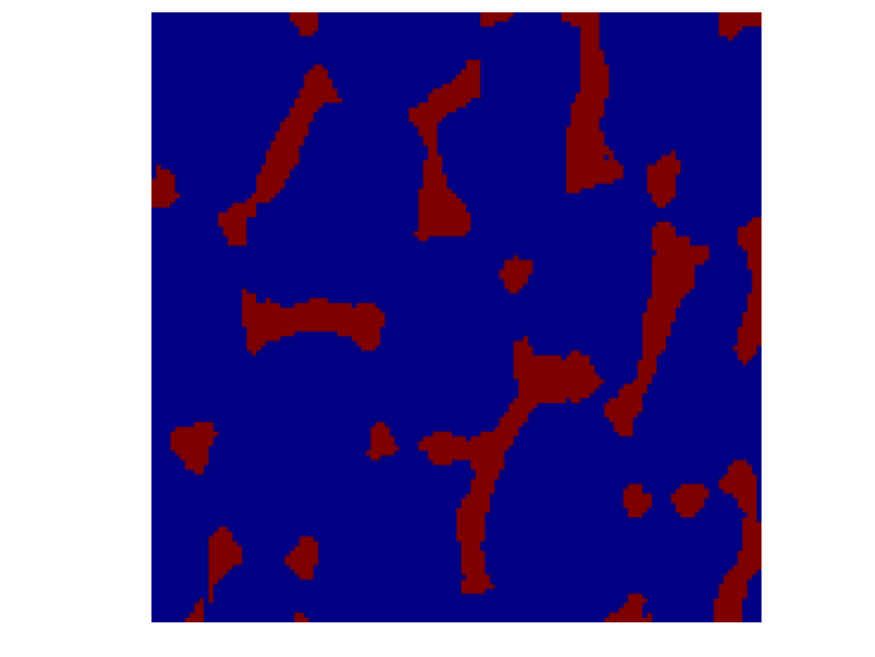}}  
        \subfigure[Velocity field]{
            \label{Velocity field 6.3}
            \includegraphics[width=0.30\linewidth]{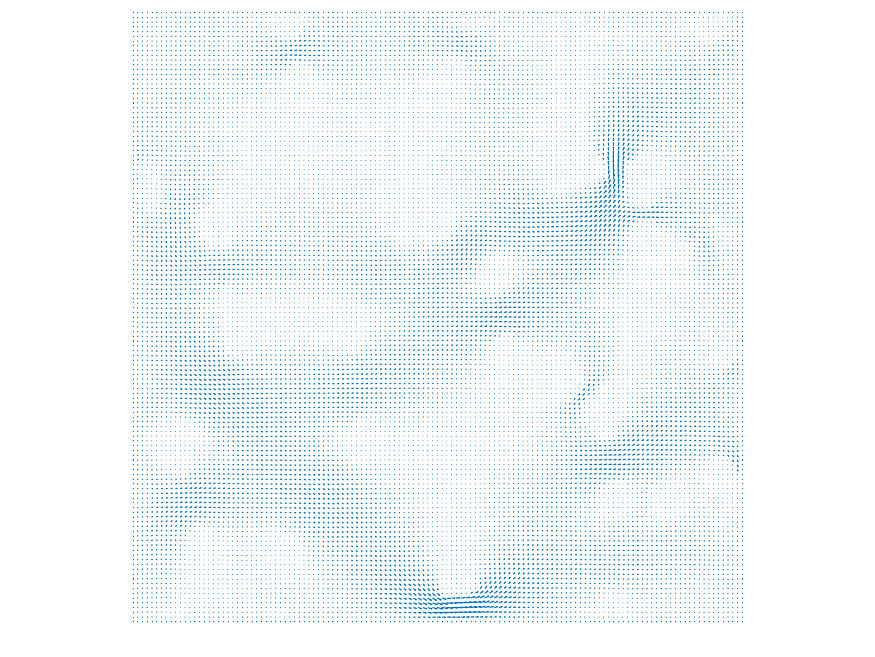}}         

          \subfigure[$u_{h,1}$]{
                \label{Ex3 u1}
                \includegraphics[width=0.30\linewidth]{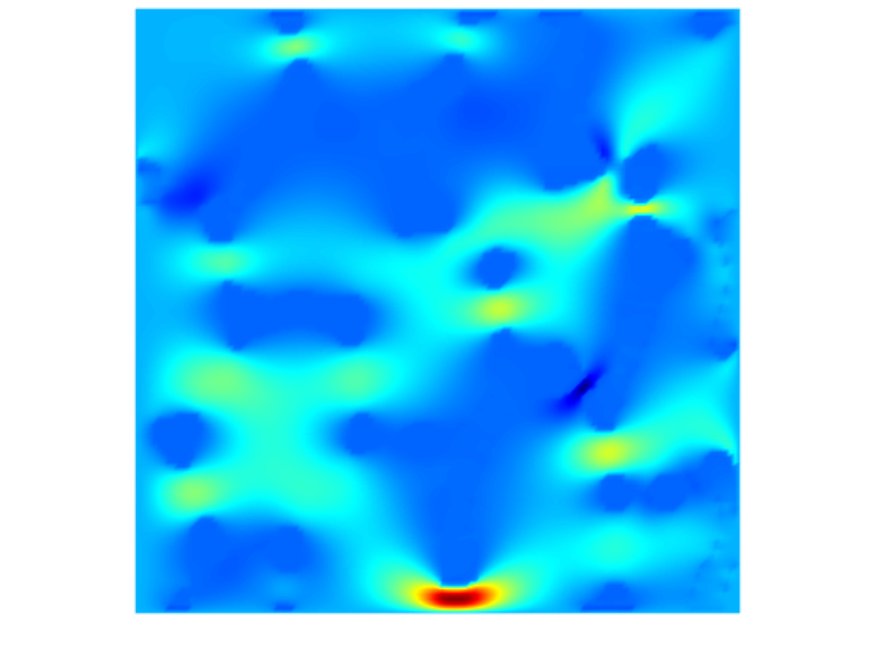}}         
        \subfigure[$u_{h,2}$]{
                \label{Ex3 u2}
                \includegraphics[width=0.30\linewidth]{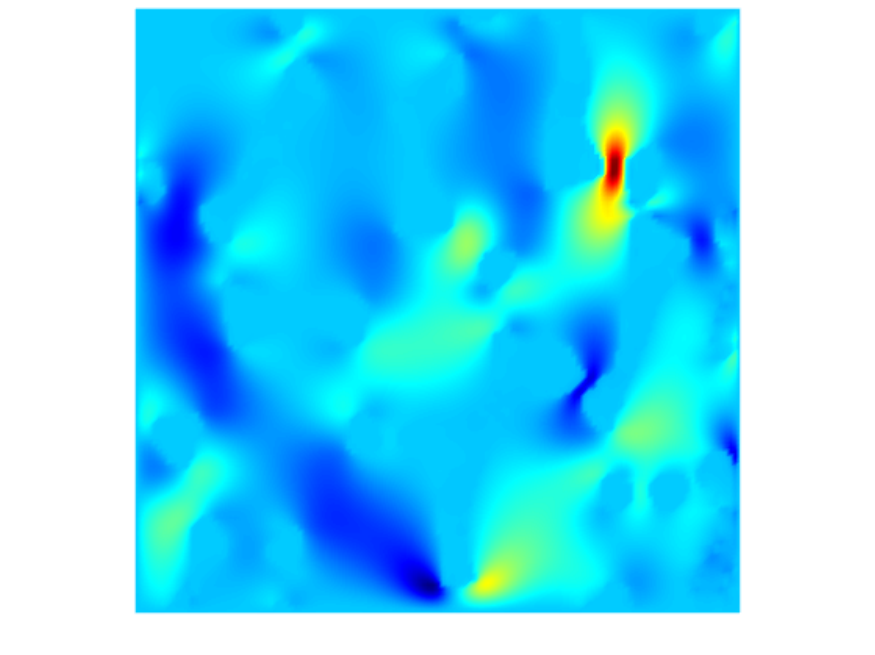}}
        \subfigure[$p_h$]{
            \label{Ex3 pressure}
            \includegraphics[width=0.30\linewidth]{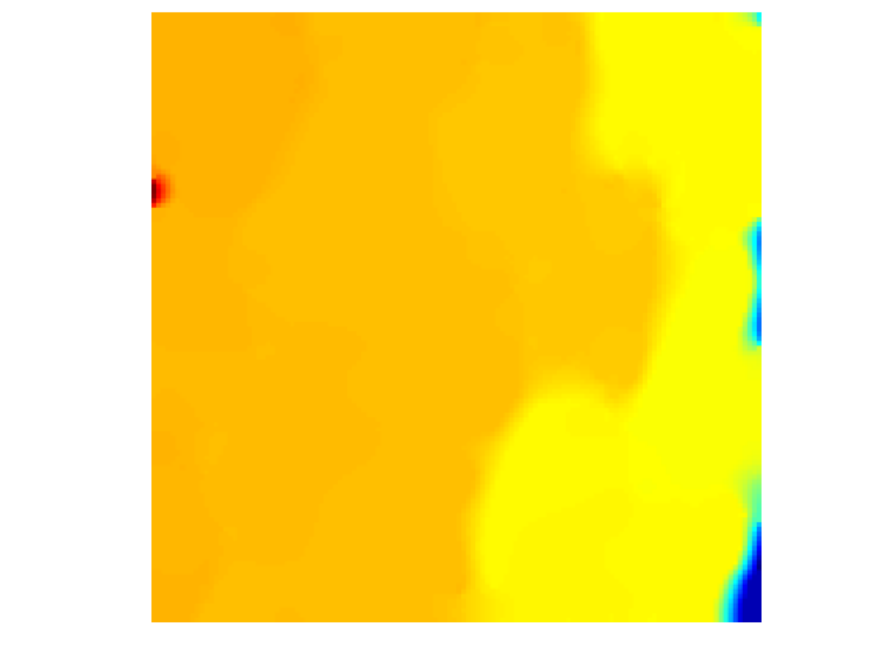}}            
        \caption{Example 6.3: Numerical results. (a) Profile of $\kappa^{-1}$ for fibrous structure; (b) Velocity field of $\mathbf{u}_h$;  (c) First component of velocity $u_{h,1}$; (d) Second component of velocity $u_{h,2}$; (e) Pressure profile $p_h$;}
        \label{Ex3-1}
    \end{figure}
  
\end{exam}
 The  velocity fields of aforementioned tow examples are shown in Fig. \ref{Velocity field 6.2} and Fig, \ref{Velocity field 6.3}. It is evident that the fluids both avoid regions of low permeability (the red part for $\kappa^{-1}$), and preferentially flow through the regions of high permeability (the blue part for $\kappa^{-1}$).

\subsection{Numerical tests for a posteriori error estimate}
In this subsection, we present three sets of examples to verify the reliability and Efficiency of the posteriori estimator $\eta$, and two examples to demonstrate the adaptive algorithm's ability to handle geometric singularities. Since the mesh is no longer a quasi-uniform polygonal mesh in our adaptive algorithm, the following numerical performance is reported with respect to the DOFs. Thus, the  convergent rates of the error $\left(\nu^2\interleave \mathbf{u} - \mathbf{u}_h \interleave^2 + \|p - p_h\|^2 \right)^{1/2}$ and $\eta$ are expected  to be $\mathcal{O}(\text{DOFs}^{-1/2})$, where $ h = \mathcal{O}(\text{DOFs}^{-1/2})$ for quasi-uniform mesh in two dimensions.

To measure the quality of the posteriori estimator, we define the effectivity index as
\begin{equation}	
    \text{Eff}: = \left(\frac{\eta^2}{\text{Err}^2_{\mathbf{u}} + \text{Err}^2_{p}}\right)^{1/2},
\end{equation}
where
$$
\text{Err}_{\mathbf{u}} := \nu\interleave \mathbf{u} - \mathbf{u}_h \interleave\quad\text{and}\quad \text{Err}_{p} = \|p - p_h\|.
$$

\begin{exam}
 \rm{The domain is $\Omega = (0,1) \times (0,1)$ and the exact solution is set to be 
    \begin{equation}
        \mathbf{u} = \left(\begin{aligned}
            10 x^2 (1-x)^2 y (1-y) (1- 2y)\\
            -10 x(1-x) (1-2x) y^2 (1-y)^2
        \end{aligned}\right),\quad p = -10 (2x -1) (2y -1),
    \end{equation}
where the load function $\mathbf{f}$ is suitably chosen, and it is easy to check that $\nabla\cdot\mathbf{u} = 0$ and $(p,1) =0$. The marking parameter $\delta$  and tolerance $\textbf{tol}$ are set equal to $0.4$ and  $10^4$, respectively. We carry out this test starting from the non-convex base mesh shown in Fig \ref{level.sub.non-convex}, and set $\nu = 1e-2,\; \kappa^{-1} = 1$.  After 13 iterations, the final results as shown in Table \ref{table error for adaptive Ex}. Moreover, we can observe  that the desired $\mathcal{O}(\text{DOFs}^{-1/2})$ optimal convergence rate is reached for both the error $\left(\text{Err}^2_{\mathbf{u}} + \text{Err}^2_{p}\right)^{1/2}$ and estimator $\eta$. The efficiency index $\text{Eff}$ is close to a constant from 7 to 8. These results validate our analytical predictions.
} 
\begin{table}[htbp]
    \caption{Example 6.4. the errors for a series of Voronoi meshes}
    \centering	\label{table error for adaptive Ex}   
    \begin{tabular}{cccccc}      
        \toprule
         DOFs & $\eta$ &Rate & $\left(\text{Err}^2_{\mathbf{u}} + \text{Err}^2_{p}\right)^{1/2}$  & Rate  & Eff \\
        \midrule
          453  &  4.2849e+00   & -    &          6.0318e-01    &          - &    7.1038\\ 
           1179  &  2.4851e+00  &  -0.57     &          3.3653e-01    &          -0.61   &  7.3846\\
          6684   & 1.0189e+00   & -0.51      &         1.2905e-01     &         -0.55   &  7.8949\\
         13794 &   7.0928e-01  & -0.50      &         8.8856e-02     &         -0.52  &   7.9823\\
          28134 &   4.9559e-01  &  -0.50   &            6.2125e-02    &          -0.50  &   7.9772\\           
        \bottomrule       
    \end{tabular}
\end{table}

\end{exam}

\begin{exam}
    \rm{(A smooth solution with boundary layers). On the unit square $\Omega= (0,1)\times(0,1)$, the exact solution is chosen as 
     \begin{equation}
             \mathbf{u} = \left(\begin{aligned}
                y - \frac{1 - e^{y/\nu}}{1- e^{1/\nu}}\\
                 x - \frac{1 - e^{x/\nu}}{1- e^{1/\nu}}
             \end{aligned}\right),\quad p =  y -x.
     \end{equation}
    The viscosity and permeability are taken as $\nu = 0.01$ and $\kappa =1$, respectively, which makes the velocity has arrow boundary layers along the line $x =1$ and $y =1$. Therefore, we expect our algorithm can refine the boundary layers. 
    
    The initial mesh is shown in Fig. \ref{initial mesh bl}. After 16 iterations, the refined mesh is obtained and shown in Fig. \ref{refined mesh bl}. We observe that there are more elements created around the boundary layers. Moreover, we observe that the desired  optimal convergence rate $\mathcal{O}(\text{DOFs}^{-1/2})$ is reached for both the error and estimator from Fig. \ref{convergence bl}. Fig. \ref{Efficiency bl} indicates the efficiency asymptotically approaches a constant value between 4 and 5.
      \begin{figure}[htbp] 
        \centering  
        \vspace{-0.1cm} 
        \subfigtopskip=2pt 
        \subfigbottomskip=5pt 
        \subfigcapskip=0pt 
        \;  \subfigure[Initial mesh]{
            \label{initial mesh bl}
            \includegraphics[width=0.30\linewidth]{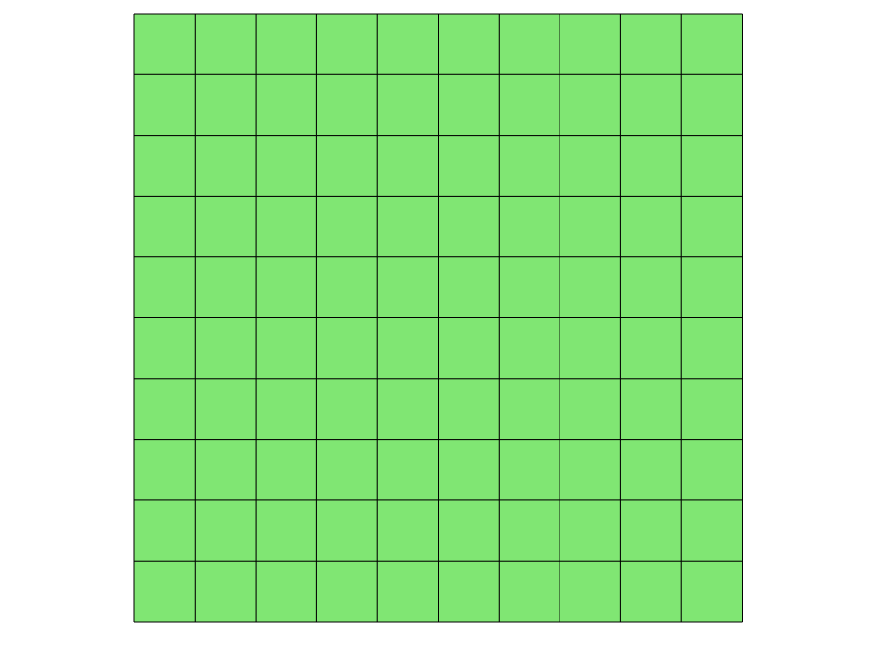}}                           
        \subfigure[Refined mesh]{
            \label{refined mesh bl}
            \includegraphics[width=0.30\linewidth]{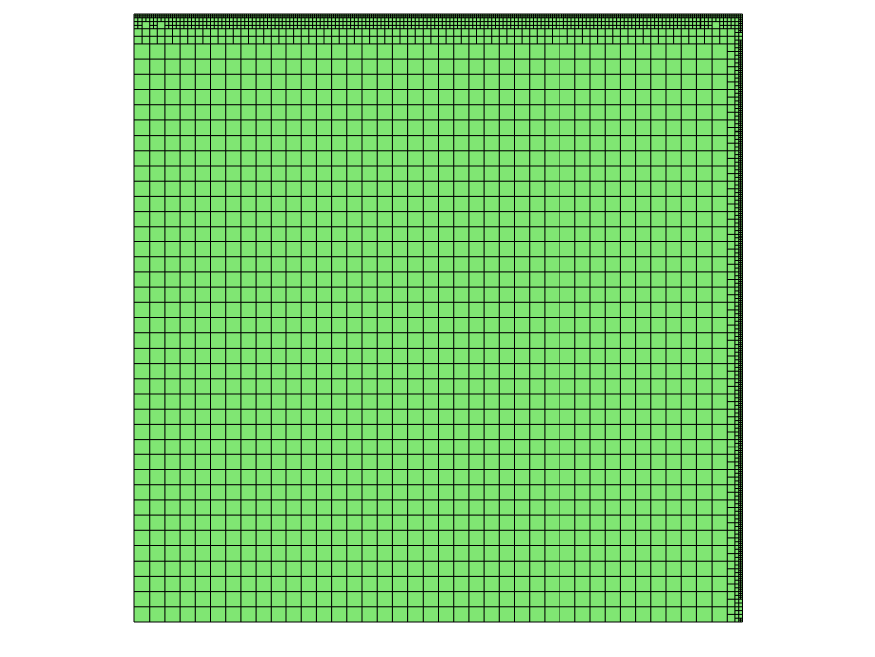}}
        
        \subfigure[Convergence]{
            \label{convergence bl}
            \includegraphics[width=0.35\linewidth]{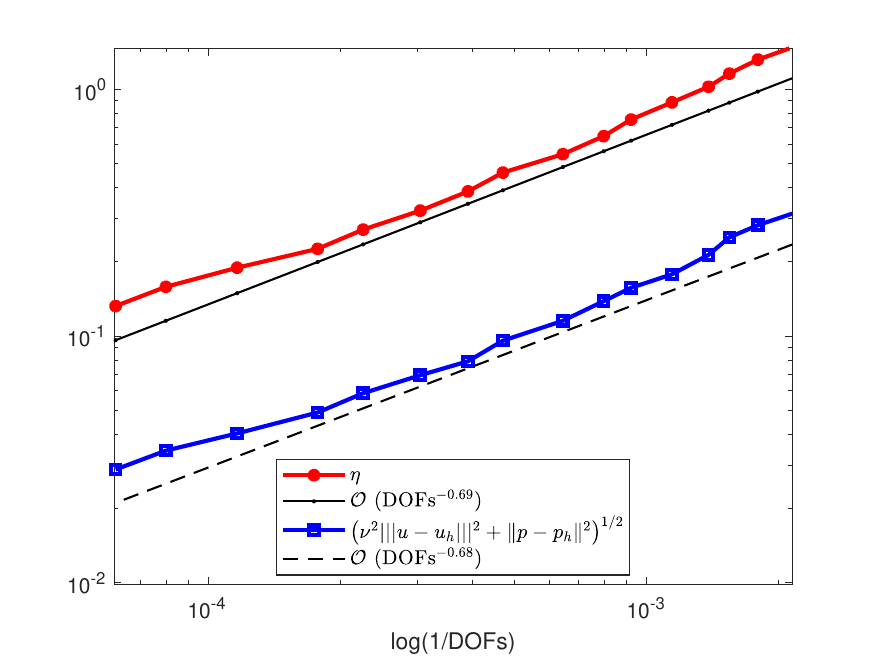}}                           
        \subfigure[Efficiency]{
            \label{Efficiency bl}
            \includegraphics[width=0.35\linewidth]{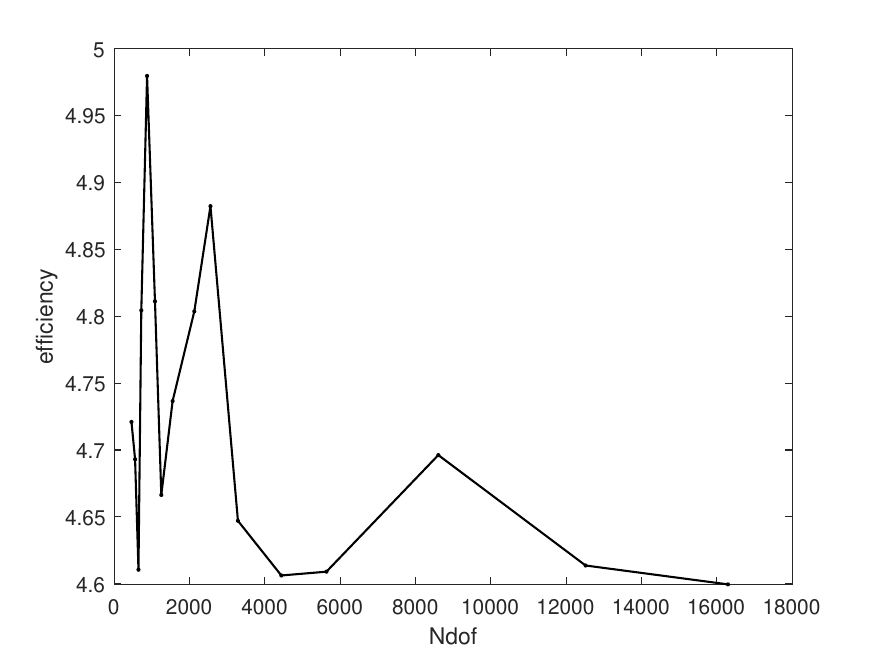}}
        \caption{Example 6.5: Numerical results. (a)Initial mesh; (b) Finally refined mesh; (c) Curves of errors convergence; (d) Efficiency index.}
        \label{Ex-bl}
    \end{figure} 

    }
\end{exam}

\begin{exam}
    \rm{(A smooth solution with interior layer). Consider the the exact solution as
  \begin{equation}
            \mathbf{u} = -10^3 e^{-10^3(1.5-x-y)^2}\begin{pmatrix}
                1\\
                -1
            \end{pmatrix}\quad \text{and}\quad p = 2 e^x \sin(y)\qquad \text{in}\quad \Omega = (0,1)\times(0,1),
 \end{equation}
 which is similar to that in \cite{barrios2015posteriori}. Setting $\nu = 0.5$, $\kappa^{-1} = 	0.01$ and the numbers of loop $\mathcal{L} = 22$.  We emphasize that there exits a inner layer around the line $1.5-x-y = 0$. We perform the test, and the numerical results as shown in Fig. \ref{Ex-5}. We can observe that the mesh refinement is concentrated around the line $1.5-x-y = 0$, which is consistent with the result we expected. We also notice that, looking at the experimental rates of convergence, the order $\mathcal{O}(\text{DOFs}^{-1/2})$ is observed for all the unknowns. Through calculating, the effective index tends to a constant.
    }
  \begin{figure}[htbp] 
      \centering  
      \vspace{-0.1cm} 
      \subfigtopskip=2pt 
      \subfigbottomskip=5pt 
      \subfigcapskip=0pt 
    \;  \subfigure[Initial mesh]{
          \label{initial mesh 5}
          \includegraphics[width=0.30\linewidth]{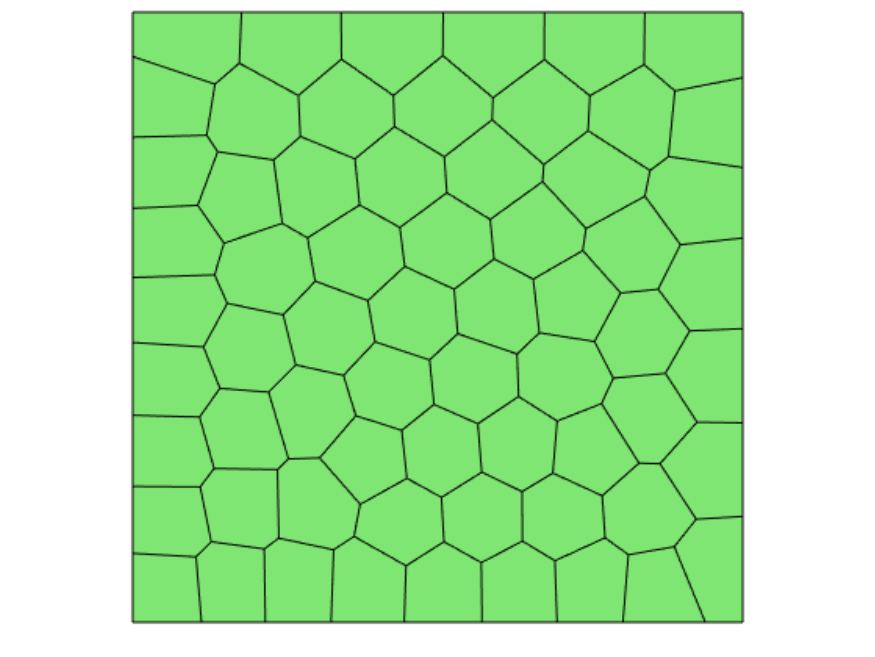}}                           
      \subfigure[Refined mesh]{
          \label{refined mesh 5}
          \includegraphics[width=0.30\linewidth]{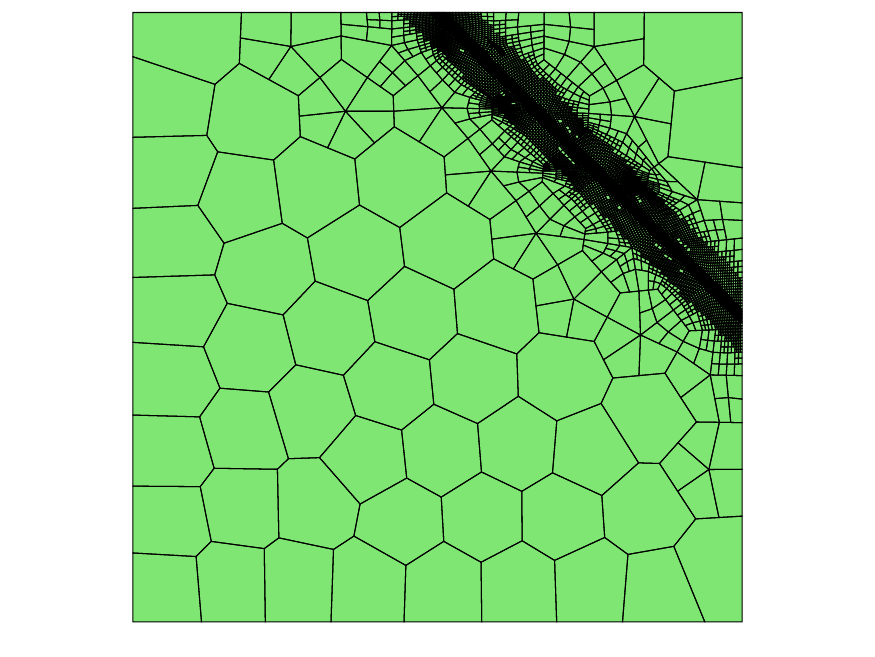}}
      
            \subfigure[Convergence]{
          \label{convergence 5}
          \includegraphics[width=0.35\linewidth]{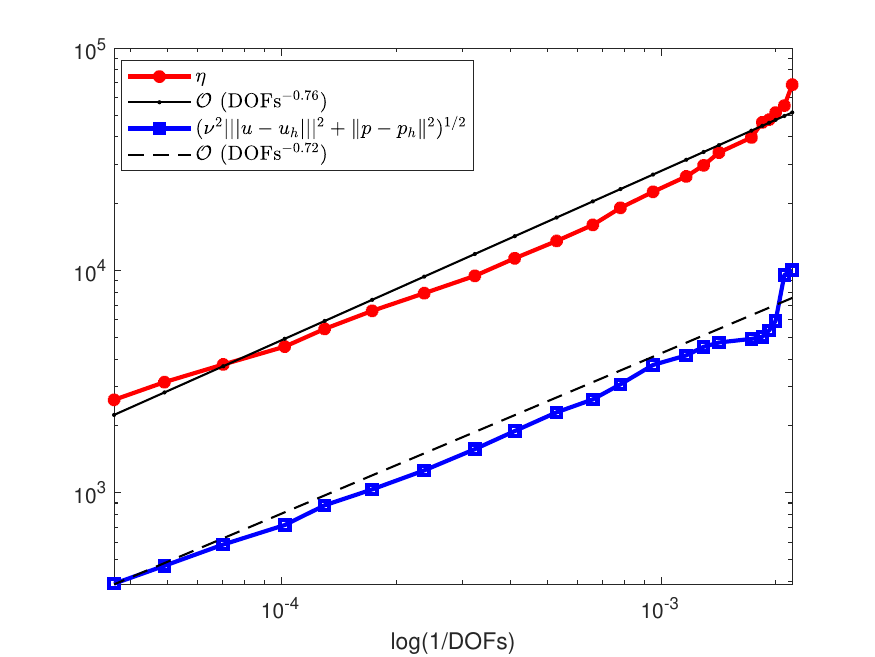}}                           
      \subfigure[Efficiency]{
          \label{Efficiency 5}
          \includegraphics[width=0.35\linewidth]{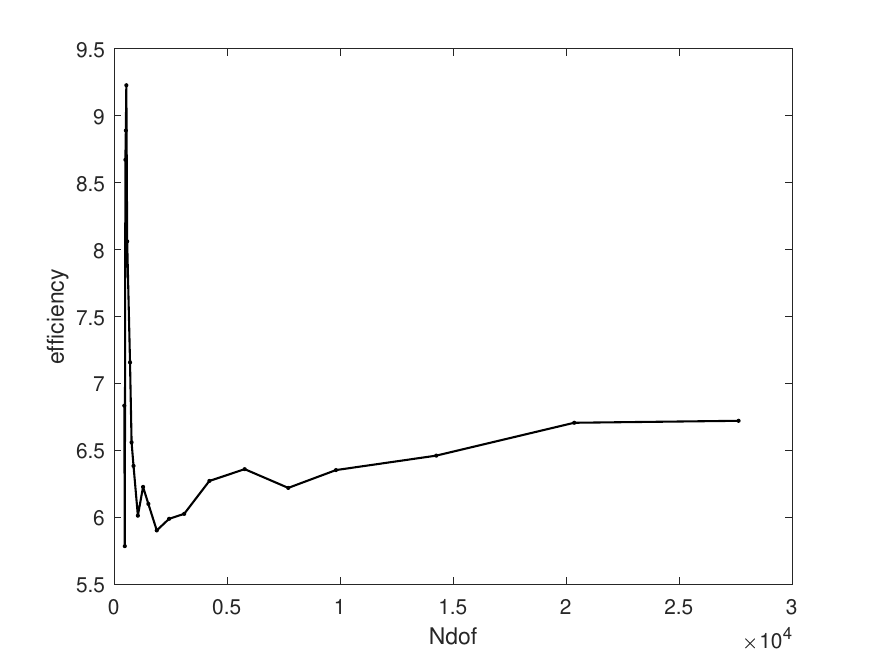}}
      \caption{Example 6.6: Numerical results. (a)Initial mesh; (b) Finally refined mesh; (c) Curves of errors convergence; (d) Efficiency index}
      \label{Ex-5}
  \end{figure}  
\end{exam}

\begin{exam}
\rm{(Corner singularity). We consider the Brinkman equation \eqref{Brinkman equations} in circular segment with radius $1$ and angle $3\pi/2$, as shown in Fig. \ref{initial mesh circle}, and with homogeneous right-hand side. The normal  and tangential  velocity on the straight parts of the  boundary are imposed and no slip condition is enforced on the remaining walls, i.e., 
     \begin{equation}
        \mathbf{u} = \begin{cases}
            (0,1)^{\top} \quad &\text{on straight left lid,} \\
            (-1,0)^{\top} \quad&\text{on straight top lid,}\\
            \mathbf{0} \quad&\text{on otherwise curved wall.}
        \end{cases}
    \end{equation}
The viscosity and permeability are taken by $1e-3$ and $1e3$. We can notice that the boundary data of velocity is discontinuous. The velocity $\mathbf{u}$ has  discontinuities at the origin  and two corners of the boundary. The so-called corner singularity appears where the straight lids meet the curved wall. Therefore, we expect the mesh refinement to be concentrated at the origin  and two corners. After 15 iterations, the refined mesh and its detail are depicted in Figs. \ref{adaptive mesh circle L}-\ref{detail 6}. We see that most of the elements are located at the origin and two corners, which confirms that our method captures the  corner singularity. Moreover, Figs. \ref{Velocity field 6}-\ref{Volocity circle L} show a good agreement between our numerical results and the benchmark data.
      
      \begin{figure}[htbp] 
          \centering  
          \vspace{-0.1cm} 
          \subfigtopskip=2pt 
          \subfigbottomskip=5pt 
          \subfigure[Initial mesh]{
              \label{initial mesh circle}
              \includegraphics[width=0.30\linewidth]{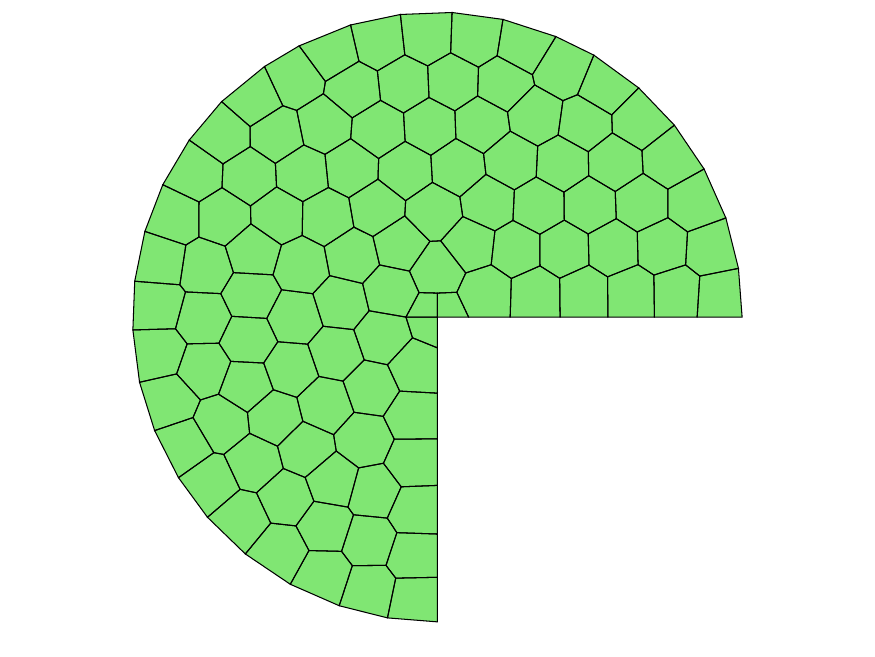}}  
          \subfigure[Adaptive mesh]{
              \label{adaptive mesh circle L}
              \includegraphics[width=0.30\linewidth]{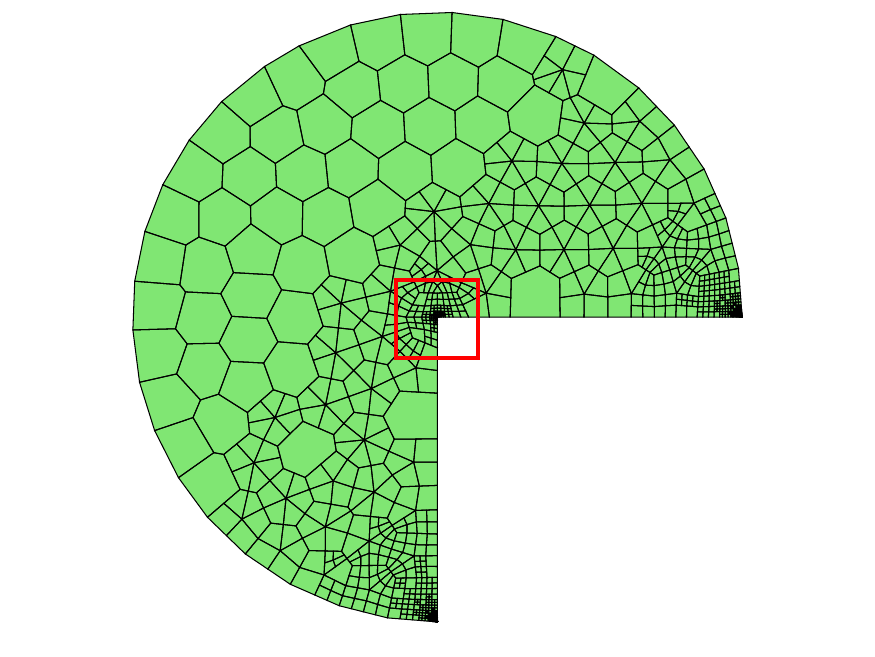}} 
          \subfigure[Local detail]{
              \label{detail 6}
              \includegraphics[width=0.30\linewidth]{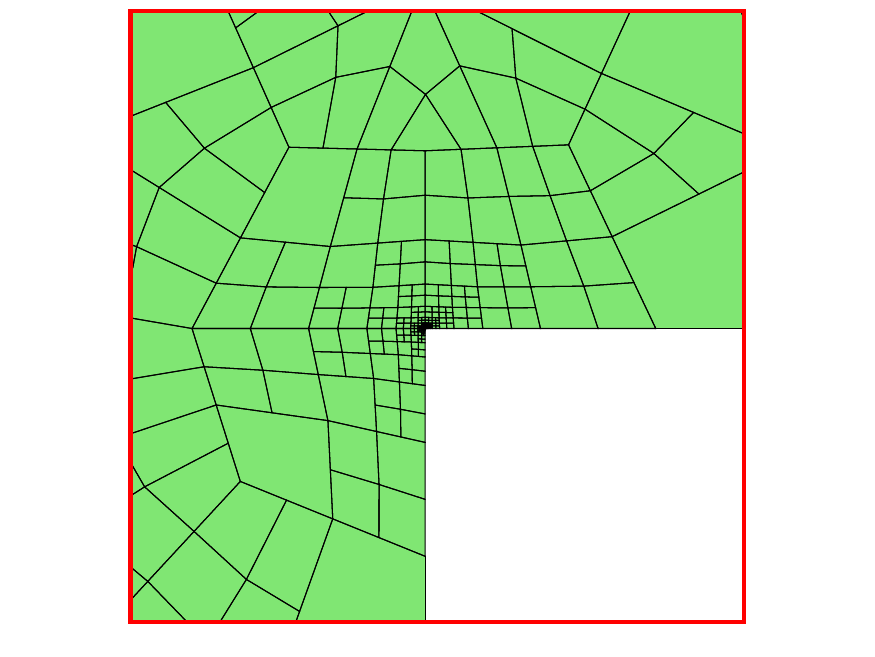}}
            
            \subfigure[Velocity field]{
                \label{Velocity field 6}
                \includegraphics[width=0.30\linewidth]{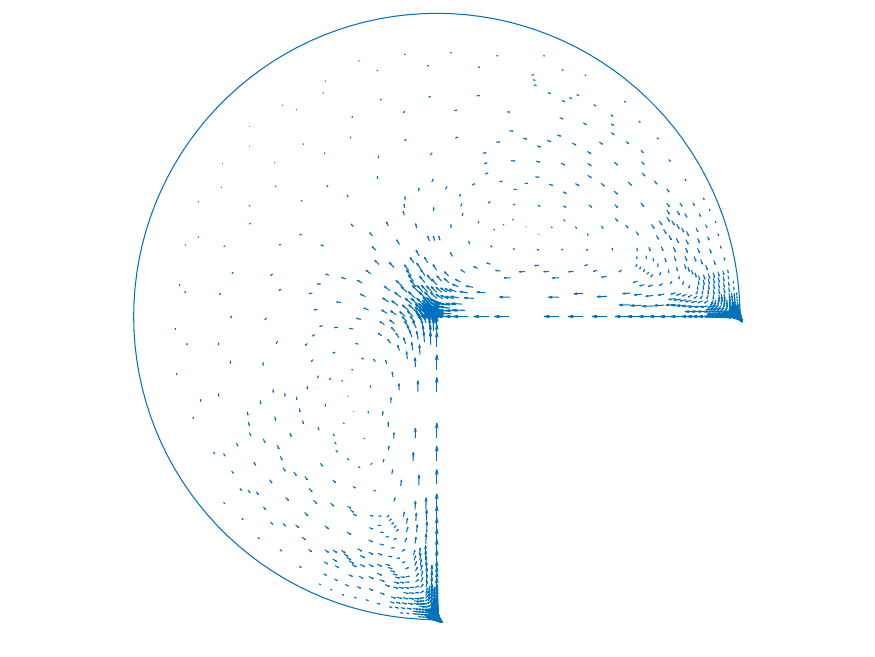} 
            }      \hspace{1cm}               
            \subfigure[Contour of velocity]{
              \label{Volocity circle L}
              \includegraphics[width=0.30\linewidth]{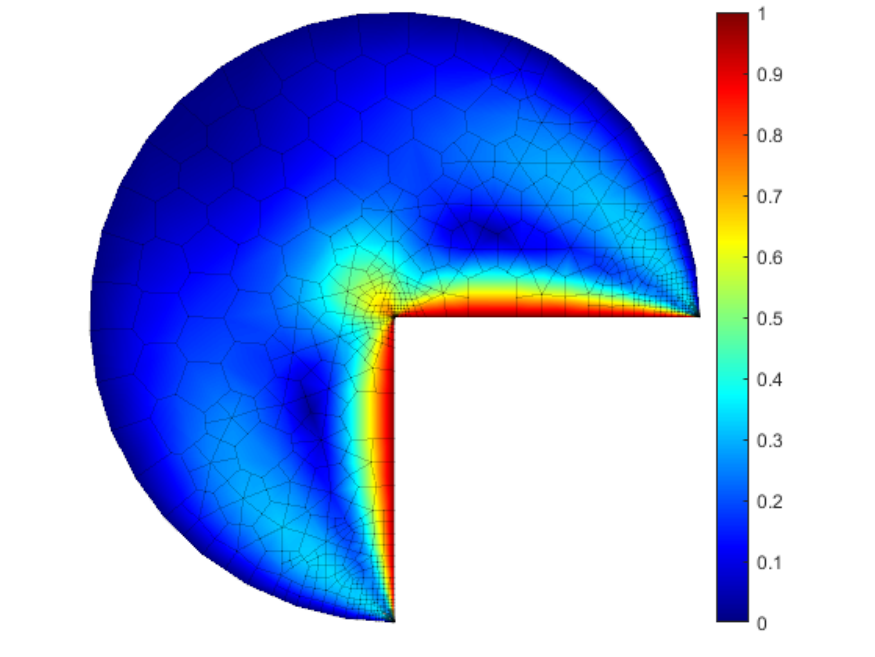} 
              }   
     \caption{Example 6.7: Numerical results. (a) Initial mesh; (b) Adaptive refined mesh; (c)  Local Refined detail around origin; (d) Velocity field of $\mathbf{u}_h$; (e) Contour plot  of $\mathbf{u}_h$.   }                 
          \label{Ex-6}
      \end{figure}    
 }
\end{exam}

\begin{exam}
        \rm{(Channel flow past a cylinder). For our last experiment, we consider the adaptive algorithm applied to the common benchmark problem of 2D channel flow past a cylinder with viscosity $\nu = 10^{-3}$ and permeability $\kappa = 1$. The domain is a $2.2 \times 0.41$ rectangular channel with a cylinder of radius $0.1$ centered at $(0.2, 0.2)$, see Fig. \ref{Computational domain 6.8}. There is no external forcing, and no-slip boundary conditions are prescribed for the walls and the cylinder, while the inflow and outflow profiles are given by
            \begin{equation}
                \mathbf{u} = \left(\begin{aligned}
                    \frac{6}{0.41^2} \sin(\pi/&8) y (0.41-y)\\
                    0&
                \end{aligned}\right).
            \end{equation}
            Around the cylinder, the flow varies considerably. Therefore, we expect the mesh refinement concentrates around the cylinder. 
            
            The initial computational mesh is depicted in Fig. \ref{Initial mesh Cylinder}, which is generated by PolyMesher \cite{polymesher2012}. After 10 iteration, the finally adaptive mesh and contour of velocity are displayed in Figs. \ref{Finally mesh velocity} - \ref{Contour of velocity}. We observe that the local mesh refinement is mainly carried out near the cylinder, which validates that our method captures the  Geometric singularity.
        } 
        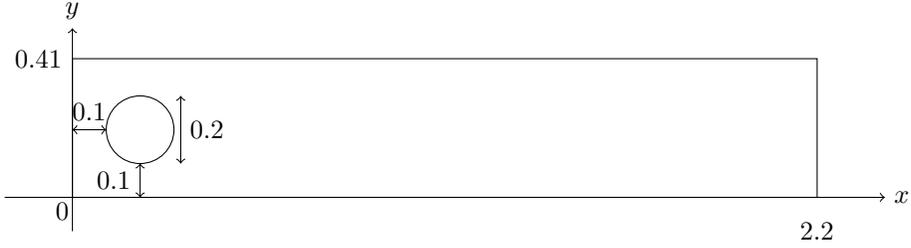
\begin{figure}[h]
            \centering
           \qquad\; \begin{tikzpicture}[scale=4.5]
                \draw[->] (-0.2,0) -- (2.4,0) node[right] {$x$};
                \draw[->] (0,-0.1) -- (0,0.5) node[above] {$y$};
                
                \draw (0,0) rectangle (2.2,0.41);
                
                \draw (0.2,0.2) circle (0.1);
                
                \draw[<->] (0.2,0) -- (0.2,0.1) node[midway,left] {0.1};

                \draw[<->] (0,0.2) -- (0.1,0.2) node[midway,above] {0.1};
                \draw[<->] (0.32,0.1) -- (0.32,0.3) node[midway,right] {0.2};
                
                \node at (2.2,-0.1) {2.2};
                \node at (-0.1,0.41) {0.41};
                \node at (-0.03,-0.04) {0};           
            \end{tikzpicture}
            \caption{Example 6.8: Computational domain.}	\label{Computational domain 6.8}
        \end{figure}
        
  \begin{figure}[htbp] 
    \centering  
    \vspace{-0.1cm} 
    \subfigtopskip=2pt 
    \subfigbottomskip=5pt 
    \subfigure[Initial mesh]{
        \label{Initial mesh Cylinder}
        \includegraphics[width=0.8\linewidth]{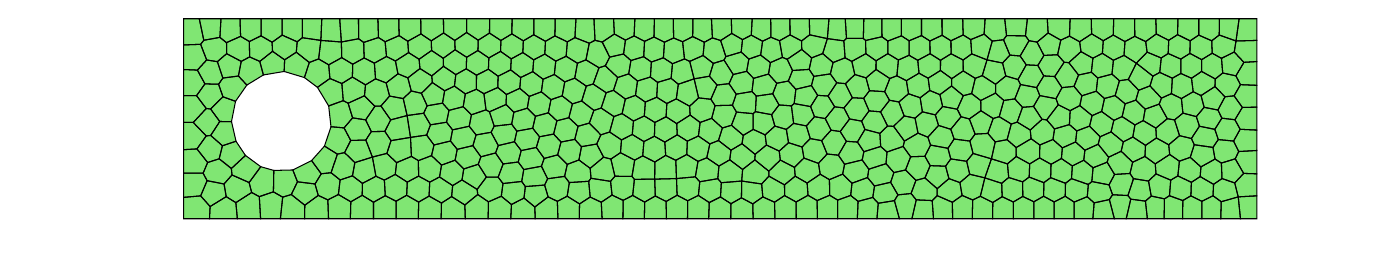}} 
    
    \subfigure[Refined mesh]{
        \label{Finally mesh velocity}
        \includegraphics[width=0.8\linewidth]{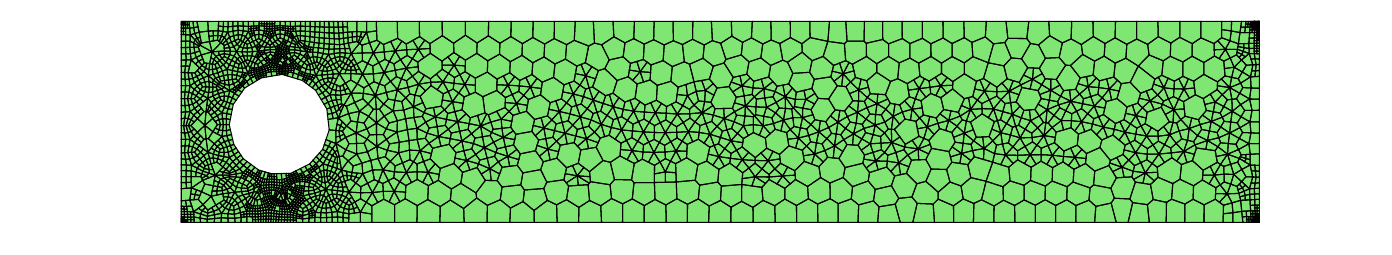}}
        
   \quad\;\;\,\,\, \subfigure[Contour of velocity]{
            \label{Contour of velocity}
            \includegraphics[width=0.85\linewidth]{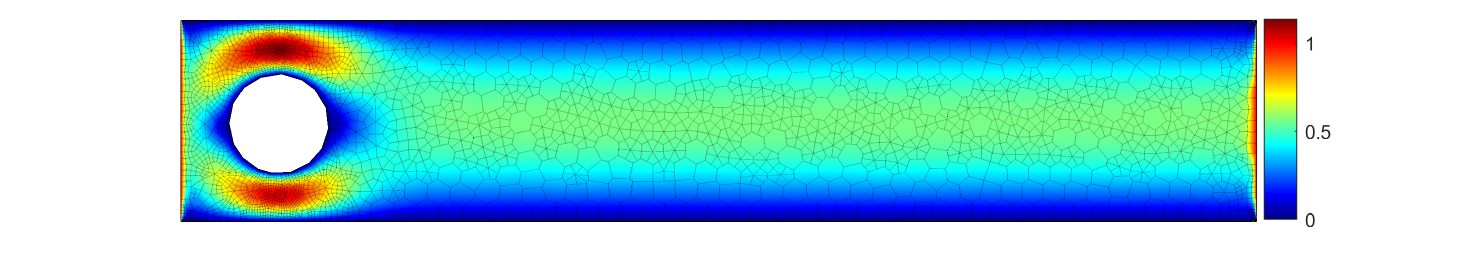}}
    \caption{Example 6.8: Numerical results. (a) Initial mesh; (b) Adaptive refined mesh; (c) Contour of velocity. }                 
    \label{Ex-Cylinder_domian}
  \end{figure}

\end{exam}

\section{conclusion}
In this paper, we have developed and analyzed a really pressure-robust VEM for solving the incompressible Brinkman problem \ref{Brinkman equations}. Based on the  space divergence-preserving reconstructor $\mathcal{R}_h$, a well-posed and really pressure-robust VEM scheme is constructed. The velocity error in the energy norm and the pressure error in the $L^2$ norm are proven to be optimal. It should be emphasized that the velocity error does not explicitly depend on the pressure and even viscosity $\nu$. Therefore, the locking phenomenon will not occur as $\nu\rightarrow 0$. To drive mesh adaptivity, we have designed a residual-based  posteriori error indicator tailored for the Brinkman equations, and have demonstrated its effectiveness and reliability. Finally, some numerical experiments on three-type polygonal meshes are given to validate our theoretical analyses.

\section*{Acknowledgements}
 Yu Xiong was supported by Postgraduate Scientific Research Innovation Foundation of Xiangtan University (XDCX2024Y178). Yanping Chen was supported by the State Key Program of National Natural Science Foundation of China (11931003), Natural Science Research Start-up Foundation of Recruiting Talents of Nanjing University of Posts and Telecommunications (NY223127). This work is grateful to associate professor Gang Wang from Northwestern Polytechnical University for his constructive suggestions.

\clearpage
\bibliographystyle{plain}
\bibliography{VEM_Brinkman}

\end{document}